\documentclass[12pt]{article}

\title{Essential tori in spaces of symplectic embeddings}
\author{Julian Chaidez, Mihai Munteanu}

\addtolength{\oddsidemargin}{-.25in}
\addtolength{\evensidemargin}{-.25in}
\addtolength{\textwidth}{0.5in}
\addtolength{\topmargin}{-.25in}
\addtolength{\textheight}{0.5in}

\usepackage{eucal}
\usepackage{amssymb}
\usepackage{latexsym}
\usepackage{tikz-cd}
\usepackage{amsmath}
\usepackage{amsthm}
\usepackage{amscd}
\usepackage{overpic}

\usepackage{url}

\usepackage{color}

\numberwithin{equation}{section}

\newtheorem{theorem}{Theorem}[section]
\newtheorem{proposition}[theorem]{Proposition}

\newtheorem{lemma}[theorem]{Lemma}
\newtheorem{lemma-definition}[theorem]{Lemma-Definition}

\theoremstyle{definition}

\newtheorem{definition}[theorem]{Definition}
\newtheorem{remark}[theorem]{Remark}

\newtheorem{review}[theorem]{Review}
\newtheorem{example}[theorem]{Example}

\newtheorem{notation}[theorem]{Notation}

\newcommand{\C}{{\mathbb C}}
\newcommand{\Q}{{\mathbb Q}}
\newcommand{\R}{{\mathbb R}}

\newcommand{\Z}{{\mathbb Z}}

\newcommand{\op}{\operatorname}

\newcommand{\CZ}{\op{CZ}}

\newcommand{\bpm}{\begin{pmatrix}}
\newcommand{\epm}{\end{pmatrix}}

\setlength{\parskip}{3pt}

\begin{document}

\setcounter{tocdepth}{2}

\maketitle

\begin{abstract}
Given two $2n$--dimensional symplectic ellipsoids whose symplectic sizes satisfy certain inequalities, we show that a certain map from the $n$--torus to the space of symplectic embeddings from one ellipsoid to the other induces an injective map on singular homology with mod $2$ coefficients. The proof uses parametrized moduli spaces of $J$--holomorphic cylinders in completed symplectic cobordisms.
\end{abstract}

\section{Introduction}

The study of symplectic embeddings is a major area of focus in symplectic geometry. Remarkably, the space of such embeddings can have a rich and complex structure, even when the domain and target manifolds are relatively simple.

Symplectic embeddings between ellipsoids are a well--studied instance of this phenomenon. For a nondecreasing sequence of positive real numbers $a=(a_1,a_2,\dots,a_n)$ define the \textit{symplectic ellipsoid} $E(a)$ by 
\begin{equation}\label{equation:ellipsoid}
E(a)=E(a_1,a_2,\dots,a_n) := \left \lbrace (z_1,\dots, z_n) \in  \C ^n\; \middle| \; \sum_{i=1}^n \frac{\pi |z_i|^2}{a_i} \leq 1 \right \rbrace.
\end{equation}
The space $E(a)$ carries the structure of an exact symplectic manifold with boundary endowed with the restriction of the standard Liouville form $\lambda$ on $\C^n$, given by
\begin{equation}\label{equation:standard_liouville_form}
\lambda = \frac{1}{2}\sum_{i=1}^n (x_idy_i - y_idx_i).\end{equation}
A special case is the \emph{symplectic ball} $B^{2n}(r)$, which is simply $E(a)$ for $a = (r,\dots,r)$.

The types of results that one can prove about symplectic embeddings, together with the tools used to do so, are surveyed at length by Schlenk in \cite{schlenk2018}. Most research has thus far  sought to address the existence problem. Let us recall some of the more striking progress in this direction. The first nontrivial result was Gromov's eponymous \emph{nonsqueezing theorem}, proven in the seminal paper \cite{gromov1985}.
\begin{theorem}[{\cite{gromov1985}}] 
\label{theorem:gromov_nonsqueezing} 
There exists a symplectic embedding 
\[ B^{2n}(r) \to B^2(R)\times \C^{2n-2}\]
 if and only if $r\leq R$.
\end{theorem}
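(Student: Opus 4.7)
The backward implication is immediate: when $r \leq R$ the composition
\[ B^{2n}(r) \;\subset\; B^2(r) \times \C^{n-1} \;\hookrightarrow\; B^2(R) \times \C^{2n-2} \]
is a symplectic embedding. The substantive direction is the forward one: assuming a symplectic embedding $\phi : B^{2n}(r) \to B^2(R) \times \C^{2n-2}$ exists, one must show $r \leq R$. My plan is to produce a pseudoholomorphic sphere through $\phi(0)$ whose symplectic area is exactly $R$, and combine this with a universal lower bound $r$ coming from monotonicity for holomorphic curves through the center of $B^{2n}(r)$.

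First, I would compactify the target. Embed $B^2(R)$ symplectically into a two-sphere $\Sigma$ of total symplectic area $R$, and pick $A \gg 0$ so large that the projection of $\phi(B^{2n}(r))$ to the last $2n-2$ coordinates lies in a polydisk of capacity $A$; embed this polydisk symplectically into a flat torus $T = T^{2n-2}$ of sufficient total area. The result is a closed symplectic manifold $(M,\omega) = (\Sigma \times T,\ \omega_\Sigma \oplus \omega_T)$ in which $\phi$ persists as a symplectic embedding into an open subset.

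Next, I would choose an $\omega$-compatible almost complex structure $J$ on $M$ that agrees with $\phi_* J_0$ on $\phi(B^{2n}(r))$, where $J_0$ is the standard complex structure on $\C^n$, and that agrees with the split structure $J_\Sigma \oplus J_T$ outside a compact neighborhood of $\phi(B^{2n}(r))$. The central step is to argue that for every such $J$ there exists a $J$-holomorphic sphere $u : S^2 \to M$ in the class $\alpha = [\Sigma \times \{\mathrm{pt}\}] \in H_2(M;\Z)$ passing through $\phi(0)$. For the split $J$ the horizontal slices $\Sigma \times \{t\}$ realize this. Bubbling is ruled out in Gromov compactness because $\pi_2(M) = \Z \cdot \alpha$, so any nodal limit of spheres in class $\alpha$ decomposes as $\alpha = \sum k_i \alpha$ with $k_i \geq 1$, forcing a single component. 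Since $\alpha$ is primitive, somewhere-injectivity is automatic, transversality holds for generic $J$, and the count of curves through a prescribed point is invariant under generic homotopies of $J$ and equals $1$ for the split structure. The $\omega$-area of $u$ equals $\langle [\omega], \alpha \rangle = R$.

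Finally, the preimage under $\phi$ of $u(S^2) \cap \phi(B^{2n}(r))$ is a proper $J_0$-holomorphic curve $v$ in $B^{2n}(r)$ through the origin with area at most $\mathrm{area}(u) = R$. The classical monotonicity inequality for $J_0$-holomorphic curves passing through the center of a symplectic ball of capacity $r$ gives $\mathrm{area}(v) \geq r$, and comparing the two bounds yields $r \leq R$. The main obstacle is the existence assertion for the $J$-holomorphic sphere in class $\alpha$ for arbitrary admissible $J$: both transversality for simple representatives and the exclusion of nodal degenerations depend on the homological simplicity of $\alpha$ in the compactified manifold, and it is the invocation of the Gromov compactness and transversality package in this setup that carries the substantive weight of the argument.
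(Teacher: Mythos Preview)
The paper does not prove this theorem; it is stated in the introduction purely as background and attributed to \cite{gromov1985}, so there is no argument in the paper to compare your proposal against.

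Your outline is essentially Gromov's original proof and is correct in spirit, but one step needs a small repair. You write that you would embed $B^2(R)$ into a sphere $\Sigma$ of total symplectic area exactly $R$. With the paper's conventions $B^2(R)$ is the closed disk of area $R$, so this is not possible as stated, and even if one passes to the open disk the resulting curve would have area $R$ on the nose, which leaves no slack for the compactness/transversality perturbations. The standard fix is to take $\Sigma$ of area $R+\epsilon$ for arbitrary $\epsilon>0$; your argument then gives $r \le R+\epsilon$ for every $\epsilon$, hence $r \le R$. With that adjustment the remaining steps --- $\pi_2(S^2\times T^{2n-2})=\Z\cdot\alpha$ ruling out bubbling, transversality for the primitive class $\alpha$, deformation invariance of the one-point count, and the Lelong/monotonicity inequality giving area $\ge r$ for a proper $J_0$-holomorphic curve through the origin of $B^{2n}(r)$ --- constitute the classical proof.
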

This result demonstrated that there are obstructions to symplectic embeddings beyond the volume and initiated the study of quantitative symplectic geometry. Note that Theorem \ref{theorem:gromov_nonsqueezing} can be seen as a result about ellipsoid embeddings, since $B^{2}(R) \times \mathbb{C}^{2n-2}$ can be viewed as the degenerate ellipsoid $E(R,\infty, \dots, \infty)$. 

In dimension $4$, the question of when the ellipsoid $E(a,b)$ symplectically embeds into the ellipsoid $E(a',b')$ was answered by McDuff in \cite{mcduff2011}. Let $\{N_k(a,b)\}_{k\geq 0}$ denote the sequence of nonnegative integer linear combinations of $a$ and $b$, ordered nondecreasingly with repetitions.
\begin{theorem}[\cite{mcduff2011}]
\label{theorem:mcduff}
There exists a symplectic embedding 
\[\op{int} (E(a,b)) \to E(a',b')\]
 if and only if $N_k(a,b)\leq N_k(a',b')$ for every nonnegative integer $k$. 
 \end{theorem}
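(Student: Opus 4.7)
The plan is to prove the two implications separately, since the forward direction is an obstruction statement and the reverse direction is a construction.

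For the forward direction, I would invoke a monotone numerical invariant: the embedded contact homology capacities $c_k$ of Hutchings, which are monotone under symplectic embeddings of four-dimensional Liouville domains. Hutchings' computation gives $c_k(E(a,b)) = N_k(a,b)$, which follows because the ECH spectrum of the standard contact form on $\partial E(a,b)$ consists exactly of the numbers $\{N_k(a,b)\}_{k \geq 0}$ with the correct grading data. Hence any symplectic embedding $\op{int}(E(a,b)) \hookrightarrow E(a',b')$ yields $N_k(a,b) \leq N_k(a',b')$ for all $k$, giving necessity.

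For the reverse direction, which is the substantive content, I would reduce to a ball packing problem. By a scaling and continuity argument, it suffices to treat the case $a = p$, $b = q$ with coprime positive integers. I would then apply the ellipsoid-to-balls decomposition of Traynor and McDuff: a symplectic embedding $\op{int}(E(p,q)) \hookrightarrow X$ exists if and only if a symplectic embedding $\bigsqcup_i B(w_i) \hookrightarrow X$ exists, where the weight sequence $w(p,q)$ is determined by the continued fraction expansion of $p/q$. Applying this construction to both the domain and, after inflating by complementary balls, the target, converts the original problem into a ball packing problem in $\mathbb{CP}^2$.

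The remaining step is to solve the resulting ball packing problem. By McDuff-Polterovich, a symplectic packing of $\mathbb{CP}^2$ by disjoint balls of sizes $r_i$ exists if and only if the blowup of $\mathbb{CP}^2$ at $N$ points admits a symplectic form $\omega$ with $[\omega] \cdot H = 1$ and $[\omega] \cdot E_i = r_i$. The existence of such a form is controlled by the structure of the symplectic cone of the blowup, characterized by Li-Liu as the set of classes pairing positively with every class represented by an embedded symplectic sphere of negative self-intersection, detected via Seiberg-Witten theory. The main obstacle is a combinatorial one: verifying that the inequalities $N_k(a,b) \leq N_k(a',b')$ translate precisely into positivity of $[\omega]$ on every exceptional class arising on these iterated blowups. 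This identification between weight sequences and exceptional classes is the technical heart of the argument; once it is in place, the existence of the required symplectic form is guaranteed and yields the desired embedding.
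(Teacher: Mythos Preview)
The paper does not prove this theorem at all: Theorem~\ref{theorem:mcduff} is stated in the introduction as a result of McDuff, with attribution to \cite{mcduff2011}, purely as background and motivation for the study of symplectic embeddings. No proof or proof sketch appears anywhere in the paper, so there is nothing to compare your proposal against.

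That said, your outline is a reasonable summary of the strategy in McDuff's original paper \cite{mcduff2011}, with one caveat. The forward direction via ECH capacities is not how McDuff argued; the identification $c_k(E(a,b)) = N_k(a,b)$ and the monotonicity of ECH capacities are due to Hutchings, and while this gives a clean proof of necessity, McDuff's own argument for the obstruction direction also goes through the ball packing and exceptional class framework rather than invoking ECH. Your sketch of the sufficiency direction---reduction to rational parameters, weight sequence decomposition into balls, and the Li--Liu description of the symplectic cone of blowups of $\mathbb{CP}^2$---does match the structure of \cite{mcduff2011}, though as you acknowledge the combinatorial identification between the hypotheses $N_k(a,b) \le N_k(a',b')$ and positivity on exceptional classes is substantial and is where most of the work lies.
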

A special case of this embedding problem, where the target ellipsoid is the ball $B^4(\lambda)$, was studied by McDuff and Schlenk in an earlier paper \cite{mcduff_schlenk2012} using methods different from \cite{mcduff2011}. In that paper, McDuff and Schlenk give a remarkable calculation of the function $c_0:\R^+ \to \R^+$ defined by
\[ c_0(a):=\inf \left\lbrace \lambda \; \middle| \;  E(1,a) \; \text{symplectically embeds into} \; B^4(\lambda) \right\rbrace. \] 
In particular, they show that for $a \in \left[1,\right(\frac{1+\sqrt{5}}{2}\left)^4\right]$, the function $c_0$ is given by a piecewise linear function involving the Fibonacci numbers, which they call the \emph{Fibonacci staircase}. Some higher dimensional cases of the existence problem for symplectic embeddings have been studied in a similar manner. For instance, a family of stabilized analogues of the function $c_0$, which are defined as
\[ c_n(a):=\inf \left\lbrace \lambda \; \middle| \;  E(1,a) \times \C^n\; \text{symplectically embeds into} \; B^4(\lambda) \times \C^n \right\rbrace, \] 
are studied in the more recent papers \cite{cristofaro_hind2015} and \cite{cristofaro_hind_mcduff2017}. 

Beyond problems of existence, one can ask about the algebraic topology of the space of symplectic embeddings $\op{SympEmb}(U,V)$ between two symplectic manifolds $U$ and $V$, with respect to the $C^\infty$ topology. Again, most results have been proven in dimensions 2 and 4. For instance, in \cite{mcduff2009}, McDuff demonstrated that the space of embeddings between $4$--dimensional symplectic ellipsoids is connected whenever it is nonempty. Other results in dimension $4$ can be found in \cite{anjos2009} and \cite{hind_pinsonnault2013}. 

More recently, in \cite{munteanu2018}, the second author developed methods to show that the contractibility of certain loops of symplectic embeddings of ellipsoids depends on the relative sizes of the two ellipsoids.  

\subsection{Main result}

In this paper, we build upon the methods developed in \cite{munteanu2018} to tackle the question of describing the higher homology groups of spaces of symplectic embeddings between ellipsoids in any dimension. 

More precisely, we will be studying families of symplectic embeddings that are restrictions of the following unitary maps. For $\theta =(\theta_1,\dots,\theta_n)\in T^n = (\R/2\pi \Z)^n$, let $U_{\theta}$ denote the unitary transformation
\begin{equation}
\label{equation:torus}
 U_{{\theta}}(z_1, \dots, z_n) :=(e^{i\theta_1} z_1,  \dots, e^{i\theta_n} z_n).
\end{equation}
Given symplectic ellipsoids $E(a)$ and $E(b)$ such that $a_i < b_i$ for every $i\in\{1 \dots, n\}$, we may define the family of ellipsoid embeddings
\begin{equation}
\label{equation:embedding_family}
\Phi:T^n \to \op{SympEmb}(E(a),E(b)), \qquad \Phi(\theta) = U_\theta|_{E(a)}
\end{equation}
by restricting the domain of the maps $U_{\theta}$. The following theorem about the family $\Phi$ is the main result of this paper.

\begin{theorem}[Main theorem]
\label{theorem:maintheorem}
Let $a=(a_1,\dots,a_n)$ and $b=(b_1,\dots, b_n)$ be two sequences of real numbers satisfying
\[a_i < b_i < a_{i+1} \quad \text{for all} \quad i\in \{1,\dots, n-1\} \quad \text{and} \quad a_n < b_n < 2a_1.\]
Furthermore, let $\Phi:T^n \to \op{SympEmb}(E(a),E(b))$ be the family of symplectic embeddings (\ref{equation:embedding_family}). Then the induced map
\[ \Phi_{*}: H_*(T^n;\Z/2) \rightarrow H_*(\op{SympEmb}(E(a), E(b));\Z/2)\]
 on homology with $\Z/2$--coefficients is injective.
\end{theorem}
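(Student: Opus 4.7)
The plan is to detect each basis class of $H_*(T^n;\Z/2)$ by an obstruction constructed from counts (mod $2$) of parametrized $J$--holomorphic cylinders in the family of symplectic cobordisms obtained from the complements of the embeddings $\Phi(\theta)$, extending to higher dimensions the loop invariants of \cite{munteanu2018}. Write $T^I\subset T^n$ for the coordinate subtorus indexed by $I\subseteq\{1,\dots,n\}$; the classes $[T^I]$ form a $\Z/2$--basis of $H_*(T^n;\Z/2)$, so it suffices to show $\Phi_*[T^I]\neq 0$ for each $I$.

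First I would set up the Reeb dynamics and family of cobordisms. The boundary $\partial E(a)$ carries simple Reeb orbits $\alpha_i$ in the $i$--th coordinate complex plane of action $a_i$, and similarly $\partial E(b)$ carries orbits $\beta_i$ of action $b_i$. The hypothesis $a_i<b_i<a_{i+1}$ interlaces these actions as $a_1<b_1<a_2<b_2<\dots<a_n<b_n$, while $b_n<2a_1$ forces every doubly covered orbit on either boundary above the action window $[0,b_n]$. For each $\theta\in T^n$ the region $W_\theta=E(b)\setminus\Phi(\theta)(\op{int}\,E(a))$ completes to an exact symplectic cobordism $\widehat{W}_\theta$ from $\partial E(b)$ to $\partial E(a)$, forming a family smoothly parametrized by $T^n$.

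Second, for each $i$ I would fix a generic $T^n$--family of compatible almost complex structures $\{J_\theta\}$ and study the parametrized moduli space $\mathcal{M}_i$ of unparametrized $J_\theta$--holomorphic cylinders in $\widehat{W}_\theta$ positively asymptotic to $\beta_i$ and negatively asymptotic to $\alpha_i$. The Conley--Zehnder index calculation for ellipsoid orbits, combined with the interlacing and the bound $b_n<2a_1$, arranges the Fredholm index of such a cylinder so that $\mathcal{M}_i$ fibers over $T^n$ as a compact $n$--dimensional manifold in which all breakings through intermediate orbits and all multiply covered limits are excluded. At the symmetric point $\theta=0$ an explicit linear holomorphic cylinder in the $i$--th plane is the unique transverse representative, so the evaluation map $\op{ev}_i:\mathcal{M}_i\to T^n$ has odd degree on the fiber circle in the $i$--th direction. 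The fibered product $\mathcal{M}_I=\mathcal{M}_{i_1}\times_{T^n}\!\cdots\times_{T^n}\!\mathcal{M}_{i_k}$ then defines, after pairing with a family $F:S\to\op{SympEmb}(E(a),E(b))$ of dimension $|I|$, a count $n_I(F)\in\Z/2$. A standard cobordism argument shows $n_I$ descends to $H_{|I|}(\op{SympEmb}(E(a),E(b));\Z/2)$, and the explicit cylinder computation at $\theta=0$ combined with the residual $T^{n\setminus I}$--symmetry gives $n_I(\Phi\circ\iota_I)=1$ while $n_I(\Phi\circ\iota_J)=0$ for $J\neq I$ of the same cardinality, proving that the classes $\Phi_*[T^I]$ are linearly independent.

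The main obstacle will be the compactness and transversality of the parametrized moduli spaces $\mathcal{M}_i$ and their fibered products. The SFT--compactness argument has to rule out cylinder breaking through intermediate Reeb orbits (controlled by the interlacing $a_i<b_i<a_{i+1}$), multiple--covered limits (excluded by $b_n<2a_1$), and degenerations of $J_\theta$ along $T^n$. Achieving parametrized transversality while retaining enough symmetry to carry out the explicit cylinder count at $\theta=0$ is a delicate technical extension of the loop case treated in \cite{munteanu2018}: once the geometry is in place the detection argument is formally familiar from ECH--type cobordism maps, but the novelty lies in organizing the cylinder counts into a torus--valued obstruction class.
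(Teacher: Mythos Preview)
Your overall strategy matches the paper's, but two steps in your outline are either confused or hide substantial work.

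First, the cobordisms $W_\theta = E(b)\setminus\Phi(\theta)(\op{int}\,E(a))$ do not actually vary with $\theta$: since $U_\theta(E(a)) = E(a)$, the underlying exact symplectic manifold is constant and only the boundary identification $\iota_- = \Phi(\theta)|_{\partial E(a)}$ changes. So over $T^n$ (with $\theta$-independent $J$) your moduli space is just $T^n \times \mathcal{M}_I(J)$, and the forgetful map $\op{ev}_i:\mathcal{M}_i\to T^n$ carries no information about the cylinders; the phrase ``odd degree on the fiber circle in the $i$-th direction'' has no clear meaning for this map. The paper's evaluation map (equations (\ref{equation:evaluation_map_definition})--(\ref{equation:small_evaluation_map})) instead records the negative asymptotic marker of each cylinder pulled back through $\Psi_p^{-1}$, landing in $\prod_{i\in I}\gamma_i^- \simeq T^{|I|}$; the $\theta$-dependence enters only through this $\Psi_p^{-1}$. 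Correspondingly, the paper does not parametrize over $T^n$ at all but argues by contradiction: assuming some $\sum_L c_L[T_L]$ is killed by $\Phi_*$, it caps the union of subtori by a smooth family $\Psi:P\to\op{SympEmb}(E(a),E(b))$, builds $\mathcal{M}_I(\mathfrak{J})$ over $P$, and derives a parity contradiction from the degree of the evaluation map on $\partial\mathcal{M}_I(\mathfrak{J})$.

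Second, your ``standard cobordism argument shows $n_I$ descends to homology'' requires producing, from a null-\emph{homologous} family $F:Z\to\op{SympEmb}(E(a),E(b))$, a smooth bounding family $\Psi:P\to\op{SympEmb}(E(a),E(b))$ with $\partial P = Z$. This is not automatic---homology and unoriented bordism differ in general---and the paper devotes \S\ref{section:topology} to it: Lemma \ref{lemma:capping_lemma} (via Proposition \ref{prop:bordism_vs_homology_with_sw_0}, Conner--Floyd's Stiefel--Whitney-number characterization of $\Omega_*(-;\Z/2)$, and some Fr\'echet-manifold care) shows that for $Z$ with trivial total Stiefel--Whitney class, null-homologous implies smoothly null-bordant. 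This is precisely why $\Z/2$ coefficients and torus domains are used. Finally, your proposed point count via an ``explicit linear cylinder'' at $\theta=0$ is not how the paper proceeds: for generic $J$ there is no explicit cylinder, and for the integrable $J$ transversality is unclear. Lemma \ref{lem:point_count} instead shows $\#\mathcal{M}_I(J)$ is odd by neck-stretching $W_\iota \# W_\jmath$ to the symplectization of $\partial E(b)$, where the only index-$0$ cylinder is the trivial one.
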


In order to demonstrate the nontriviality of Theorem \ref{theorem:maintheorem}, we note that the map induced by $\Phi:T^n \to \op{Symp}(E(a),E(b))$ on $\Z/2$--homology has a sizeable kernel when $E(a)$ is very small relative to $E(b)$. More precisely, we have the following.

\begin{proposition}
\label{prop:smallellipsoid}
Let $a=(a_1,\dots,a_n)$ and $b=(b_1,\dots, b_n)$  be two nondecreasing sequences of real numbers satisfying $a_n < b_1$. Furthermore, let $\Phi:T^n \to \op{SympEmb}(E(a),E(b))$ be as in (\ref{equation:embedding_family}). Then the induced map $\Phi_*$ on $\Z/2$--homology has rank $1$ in degree $\le 1$ and rank $0$ otherwise.
\end{proposition}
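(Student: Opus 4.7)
The plan is to factor $\Phi$ through the unitary group $U(n)$ using the extra room provided by $a_n<b_1$, and then to exploit the Pontryagin algebra structure on $H_*(U(n);\Z/2)$ to kill all classes of degree $\ge 2$.

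First, I would verify that $a_n<b_1$ gives the chain of inclusions $E(a)\subseteq B^{2n}(a_n)\subseteq B^{2n}(b_1)\subseteq E(b)$, where $B^{2n}(r)=\{\sum_i\pi|z_i|^2\le r\}$ as in the paper (the first inclusion comes from $\sum_i\pi|z_i|^2=\sum_i a_i\cdot(\pi|z_i|^2/a_i)\le a_n$ for $z\in E(a)$). Since $B^{2n}(b_1)$ is invariant under the full unitary group, every $g\in U(n)$ restricts to a symplectic embedding $g|_{E(a)}\in\op{SympEmb}(E(a),E(b))$, which produces a continuous extension $\Psi:U(n)\to\op{SympEmb}(E(a),E(b))$, $g\mapsto g|_{E(a)}$. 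Then $\Phi=\Psi\circ\iota$, where $\iota:T^n\hookrightarrow U(n)$ is the maximal torus inclusion.

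To handle degrees $\ge 2$, I would use that $\iota$ is a Lie group homomorphism, so $\iota_*$ respects Pontryagin products. The Pontryagin algebra $H_*(T^n;\Z/2)$ is the exterior algebra $\Lambda_{\Z/2}[\gamma_1,\dots,\gamma_n]$ generated by the coordinate circles, and every $\iota_*(\gamma_i)$ equals the same class $y\in H_1(U(n);\Z/2)=\Z/2$, since each coordinate circle represents the generator of $\pi_1(U(n))=\Z$. The heart of the argument is the computation $y^k=0\in H_k(U(n);\Z/2)$ for $k\ge 2$: the Pontryagin $k$-th power of $y$ is represented by the map $(\theta_1,\dots,\theta_k)\mapsto\op{diag}(e^{i\sum_j\theta_j},1,\dots,1)$, which factors through the addition $(S^1)^k\to S^1$ and hence vanishes in $H_k$. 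Since the products $\gamma_{i_1}\cdots\gamma_{i_k}$ span $H_k(T^n;\Z/2)$, this shows $\iota_*=0$ in degrees $\ge 2$, and therefore $\Phi_*=\Psi_*\circ\iota_*=0$ there as well.

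In degree $1$, the same argument gives $\op{rank}\Phi_*\le 1$. For the matching lower bound, I would post-compose $\Phi$ with the continuous derivative-at-origin map $D_0:\op{SympEmb}(E(a),E(b))\to\op{Sp}(2n,\R)$, $\phi\mapsto d\phi|_0$; since each $U_\theta$ is linear, $D_0\circ\Phi$ is exactly the composition $T^n\hookrightarrow U(n)\hookrightarrow\op{Sp}(2n,\R)$, which is nonzero on $H_1(-;\Z/2)$ (every coordinate circle maps to the generator). This forces $\op{rank}\Phi_*\ge 1$, and degree $0$ is immediate from nonemptiness. I do not foresee any genuinely difficult step: the only real input is the Pontryagin vanishing $y^k=0$, which expresses the fact that the $k$-fold product of the generating loop in $U(n)$ retracts onto a single circle.
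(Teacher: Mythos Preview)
Your proof is correct and follows the same overall architecture as the paper: factor $\Phi$ through the restriction map $U(n)\to\op{SympEmb}(E(a),E(b))$ (available because $a_n<b_1$), and recover the rank--$1$ statement in degree $1$ via the derivative--at--origin map back to $\op{Sp}(2n,\R)\simeq U(n)$.

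The one genuine difference is in how you kill $\iota_*$ in degrees $\ge 2$. The paper splits each subtorus $T_L$ as $T_J\times T_K$ with $|J|=2$, pushes into $U(2)\times U(n-2)\hookrightarrow U(n)$, and then invokes the explicit ring $H^*(U(2);\Z/2)\cong\Z/2[c_1,c_3]$ to get $H_2(U(2);\Z/2)=0$. You instead use that $\iota$ is a group homomorphism, so $\iota_*$ respects Pontryagin products; since every coordinate circle hits the same class $y\in H_1(U(n);\Z/2)$, it suffices to show $y^2=0$, which you do by observing that the Pontryagin square factors through the addition map $S^1\times S^1\to S^1$. Your route is more self--contained (no appeal to the cohomology of $U(2)$) and isolates the phenomenon cleanly as ``the generating loop squares to zero''; the paper's route has the mild advantage of not needing to know that Pontryagin products on $H_*(T^n)$ recover the subtorus classes, but this is standard. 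Either way the argument is short and the two are interchangeable.
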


Unlike the proof of Theorem \ref{theorem:maintheorem}, the proof of Proposition \ref{prop:smallellipsoid} is an elementary calculation in algebraic topology which we defer to \S \ref{section:proof}. 

\begin{remark}[Comparison to \cite{munteanu2018}]
 \label{remark:comparison_to_mihai} 
 In dimension 4, the fact that $\Phi_*$ is injective in degree $1$ was proven by the second author in \cite{munteanu2018}. Specifically, this is equivalent to \cite[Theorem 1.4]{munteanu2018} which states that the loop
\[ \Psi:S^1 \to \op{Symp}(E(a),E(b)) \]
defined by 
\[ \Psi(t)(z_1,z_2):= 
\left\{\begin{array}{cc}
(e^{4\pi i t}z_1, z_2) & t \in \left[0,\frac{1}{2}\right]\\
(e^{-4\pi i t}z_1, z_2) & t \in \left(\frac{1}{2},1\right]\\
\end{array}\right.
\]
is noncontractible. In fact, \cite{munteanu2018} actually addresses the more general 4--dimensional case where $E(a)$ and $E(b)$ are replaced with convex toric domains in $\C^2$ satisfying specific inequalities involving their ECH capacities. We expect Theorem \ref{theorem:maintheorem} to hold at this level of generality, and we hope to address this in future work using somewhat different methods (see Remark \ref{remark:lagrangian_generalization}).
\end{remark}

\begin{remark}[$\Z$ vs $\Z/2$ coefficients]
\label{remark:ZvsZ2_coefficients} 
Our use of $\Z/2$ coefficients, instead of $\Z$ coefficients, allows us to use the methods of \S \ref{section:topology} to work entirely with smooth manifolds with boundary as opposed to cochains. While the contents of \S \ref{section:topology} provide a nice technical work around, we expect Theorem \ref{theorem:maintheorem} to hold at the level of $\Z$ coefficients as well. We plan to develop the methods needed to work over $\Z$ in forthcoming work.
\end{remark}

\begin{remark}[Lagrangian analogues]
\label{remark:lagrangian_generalization}
In forthcoming work, we hope to demonstrate results analogous to Theorem \ref{theorem:maintheorem} for families of Lagrangian torus embeddings in toric domains. We anticipate that these results will be useful for demonstrating the various generalization of Theorem \ref{theorem:maintheorem} discussed in Remark \ref{remark:comparison_to_mihai}.
\end{remark}

\noindent \textbf{Organization.} The rest of the paper is organized as follows. In \S\ref{section:setup}, we establish the geometric setup and notation. In \S \ref{section:proof}, we construct the moduli spaces and prove the needed transversality and compactness properties together with a lemma about the count of curves in these moduli spaces to build up towards a proof of Theorem \ref{theorem:maintheorem}. Lastly, in \S \ref{section:topology}, we prove some useful technical results about the topology of spaces of symplectic embeddings.

\vspace{10pt}

\noindent \textbf{Acknowledgements.} We would like to thank our advisor, Michael Hutchings for all the helpful discussions and for pointing our some significant simplifications to earlier drafts. JC was supported by the NSF Graduate Research Fellowship under Grant No. 1752814. MM was partially supported by NSF Grant No. DMS--1708899.

\section{Geometric setup}
\label{section:setup}

In this section, we review the concepts from contact geometry and holomorphic curve theory needed in this paper. For a more comprehensive discussion of these topics, see \cite{geiges2008}, \cite{mcduff2017}, \cite{wendl2010} and \cite{wendl2016}.

\subsection{Contact geometry} \label{subsection:review_of_contact_geometry} We begin by providing a quick overview of basic contact geometry and establishing notation for \S \ref{section:proof}. We include a review the Reeb dynamics on the boundary of a symplectic ellipsoid with rationally independent defining parameters.

\begin{review}[Contact manifolds] Recall that a \emph{contact manifold} $(Y,\xi)$ is a smooth $(2n-1)$--manifold $Y$ together with a rank $2n-2$ sub-bundle $\xi \subset TY$ that is given fiberwise by the kernel $\xi = \op{ker}(\alpha)$ of a contact $1$--form $\alpha \in \Omega^1(Y)$. A \emph{contact form} $\alpha$ is a $1$--form on $Y$ satisfying $\alpha \wedge d\alpha^{n-1} \neq 0$ everywhere. 

Every contact form $\alpha$ on $Y$ has a naturally associated \emph{Reeb vector field} $R_\alpha$ defined implicitly from $\alpha$ via the equations
\begin{equation}
\label{equation:Reeb_vectorfield}
\iota_{R_\alpha}\alpha = 1, \qquad \iota_{R_\alpha}d\alpha = 0.
\end{equation}
The \emph{Reeb flow} $\Phi_\alpha:Y \times \R \to Y$ is the flow of the vector field $R_\alpha$, i.e. the family of diffeomorphisms satisfying
\begin{equation}
\label{equation:Reeb_flow}
\frac{d\Phi^t_\alpha(y)}{dt}\bigg\rvert_{t = s} = R_\alpha \circ \Phi^s_\alpha(y).
\end{equation}

A \emph{Reeb orbit} is a closed orbit of the flow $\Phi_\alpha$, i.e. a curve $\gamma:S^1=\R/L\Z \to Y$ satisfying $\frac{d \gamma}{d t} = R_{\alpha}\circ \gamma$ for some positive number $L$ which is called the \textit{period}. Note that $L$ coincides with the \emph{action} $\mathcal{A}_\alpha(\gamma)$ of $\gamma$, which is defined as
\begin{equation}
\mathcal{A}_\alpha(\gamma) := \int_{S^1} \gamma^*\alpha.
\end{equation}

A Reeb orbit $\gamma$ is called \emph{nondegenerate} if the differential $T\Phi^L_{\gamma(0)}$ of the time $L$ flow satisfies
\begin{equation} \label{equation:nondegenerate}
\op{det}(T\Phi^L_{\gamma(0)}|_\xi - \op{Id}_{\xi}) \neq 0.
\end{equation}
A contact form $\alpha$ is called \textit{nondegenerate} if every Reeb orbit of $\alpha$ is nondegenerate. \end{review}

\begin{review}[Conley--Zehnder indices] \label{review:conley_zehnder_indices} Any nondegenerate Reeb orbit $\gamma$ posseses a fundamental numerical invariant called the \emph{Conley--Zehnder index} $\CZ(\gamma,\tau)$, whose definition and computation we now review. 

 The Conley--Zehnder index $\CZ(\gamma,\tau)$ depends on a choice of symplectic trivialization $\tau:\gamma^*\xi \simeq S^1 \times \C^{n-1}$. The invariant is defined by $\CZ(\gamma,\tau) := \mu_{\op{RS}}(\phi)$ were $\mu_{RS}$ denotes the Robbin--Salamon index defined in \cite{rs1993} and $\phi$ is the path of symplectic matrices defined as
 \[
  \phi:[0,L] \to \op{Sp}(2n-2), \qquad \phi(t) :=\tau_{\gamma(t)} \circ T\Phi^{t}_{\gamma(0)}|_{\xi} \circ \tau_{\gamma(0)}^{-1}.
 \]

In the case where $c_1(\xi) = 0 \in H^2(Y;\Z)$ and $[\gamma] = 0 \in H_1(Y;\Z)$, a canonical Conley--Zehnder index $\op{CZ}(\gamma)$ which does not depend on a choice of trivialization can be associated to $\gamma$ via the following procedure. Extend $\gamma$ to a map $u:\Sigma \to Y$ from an oriented surface $\Sigma$ with boundary $\partial \Sigma = S^1$ satisfying $u|_{\partial \Sigma} = \gamma$. Pick a symplectic trivialization $\sigma:u^*\xi \simeq \Sigma \times \C^{n-1}$ and define $\op{CZ}(\gamma)$ by the formula
\begin{equation}
\op{CZ}(\gamma) := \op{CZ}(\gamma,\sigma|_{\partial \Sigma}).
\end{equation}
The fact that $\op{CZ}(\gamma)$ is independent of $\Sigma$ and $\sigma$ follows from the vanishing of the first Chern class. The index $\op{CZ}(\gamma)$ can be related to the index $\op{CZ}(\gamma,\tau)$ with respect to a trivialization $\tau$ by the formula
\begin{equation}\CZ(\gamma) = \CZ(\gamma,\tau) + 2c_1(\gamma,\tau).\end{equation}
Here $c_1(\gamma,\tau)$ is the relative first Chern number with respect to $\tau$ of the pullback $u^*\xi$ of $\xi$ to a capping surface $u$ of $\gamma$.\end{review}

For the purposes of this paper, we are interested in a specific family of examples of contact manifolds, namely boundaries $\partial E(a)$ of irrational symplectic ellipsoids.  

\begin{example}[Ellipsoids]
 \label{example:ellipsoid_dynamics} 
Let $E(a)$ be a symplectic ellipsoid with parameters $a = (a_1,\dots,a_n) \in (0,\infty)^n$ satisfying $a_i/a_j \not\in \Q$ for each $i \neq j$. The boundary of the ellipsoid $\partial E(a)$ together with the restriction of the standard Liouville form $\lambda$ on $\C^n$ defined by (\ref{equation:standard_liouville_form}) is a contact manifold. 

The discussion in the proof of \cite[Lemma 2.1]{gutthutchings2018} shows that there are precisely $n$ simple orbits $\gamma_i=\{(z_1,z_2,\dots,z_n)\in\partial E(a) \; | \; z_j=0, \; \forall j\neq i\}$ for $1 \le i \le n$. All the orbits $\gamma_i$ are nondegenerate and their action is given by $\mathcal{A}_\alpha(\gamma_i) = a_i$. Moreover, using the linearization of the Reeb flow, one can compute the Conley--Zehnder indices of the Reeb orbits $\gamma_i^m$ to be
\begin{equation}
\CZ(\gamma^m_i) =  \sum_{j \neq i} \left(2 \left\lfloor \frac{ma_i}{a_j} \right\rfloor + 1\right) + 2m,
\end{equation}
which after some smart rewriting becomes
\begin{equation} \label{equation:CZ_index_for_ellipsoids}
\CZ(\gamma^m_i) = n - 1 + 2\left|\left\lbrace L \in \op{Spec}(Y,\alpha)\;\middle|\; L \le ma_i\right\rbrace\right|.
\end{equation}
\end{example}

Next, we review the basic terminology of exact symplectic cobordisms and associated structures. Throughout the discussion for the rest of the section, let $(Y_{\pm},\alpha_{\pm})$ be closed contact $(2n-1)$--manifolds with contact forms $\alpha_\pm$.

\begin{review}[Exact symplectic cobordisms] \label{review:exact_symplectic_cobordism} Recall that an \textit{exact symplectic cobordism} $(W,\lambda,\iota)$ from $(Y_{+},\alpha_{+})$ to $(Y_{-},\alpha_{-})$ consists of the following data.
\begin{itemize}
\item[$\cdot$] A compact, exact symplectic manifold $(W,\lambda)$ with boundary $\partial W$ such that the \emph{Liouville vector field} $Z$ defined by the equation $d\lambda(Z,\cdot) = \lambda$ is transverse to $\partial W$ everywhere. In this situation, $\partial W = \partial_+W \sqcup \partial_-W$, where $Z$ points outward along $\partial_+W$ and inward along $\partial_-W$.
\item[$\cdot$] A pair of boundary inclusion maps $\iota_+$ and $\iota_-$, which are strict contactomorphisms of the form
\begin{equation}
\iota_+:(Y_+,\alpha_+) \simeq (\partial_+W,\lambda|_{\partial_+ W}), \qquad \iota_-:(Y_-,\alpha_-) \simeq (\partial_-W,\lambda|_{\partial_- W}).
\end{equation}
\end{itemize}
We will generally suppress the inclusions in the notation, using $\iota_+$ and $\iota_-$ when needed. The maps $\iota_+$ and $\iota_-$ extend, via flow along $Z$ or $-Z$, to collar coordinates
\begin{equation} \label{equation:collar_charts} ([0,\epsilon)\times Y_{-}, e^s\lambda_{-}) \simeq (N_{-}, \lambda|_{N_-}), \qquad ((-\epsilon,0]\times Y_{+}, e^s\lambda_{+}) \simeq (N_{+}, \lambda|_{N_+}).\end{equation}
Here $N_-$ and $N_+$ are collar neighborhoods of $Y_-$ and $Y_+$ respectively, the maps preserve the $1$--forms above and $s$ denotes the coordinate on $[0,\epsilon)$ and $(-\epsilon,0]$.

Given exact symplectic cobordisms $(W,\lambda,\iota)$ from $(Y_0,\alpha_0)$ to $(Y_1,\alpha_1)$ and $(W',\lambda',\iota')$ from $(Y_1,\alpha_1)$ to $(Y_2,\alpha_2)$, we can form the \emph{composition} $(W \# W', \lambda \# \lambda', \iota \# \iota')$ by gluing $W$ and $W'$ via the identification $(\iota'_+)^{-1} \circ \iota_-$ of $\partial_-W$ and $\partial_+W'$. The Liouville forms and inclusions extend in the obvious way to the glued manifold.

Using these identifications (\ref{equation:collar_charts}), we can complete the exact symplectic cobordism $(W,\lambda)$ by adding cylindrical ends $(-\infty,0]\times  Y_{-}$ and $[0,\infty) \times Y_{+}$ to obtain the \textit{completed exact symplectic cobordism} $(\widehat{W},\widehat{\lambda})$, given by
\begin{equation} \widehat{W} =  (-\infty, 0]\times Y_{-} \sqcup_{\iota_-} W \sqcup_{\iota_+} [0,\infty)\times Y_{+} .\end{equation}
The Liouville $1$--forms $\lambda$, $e^s\alpha_-$ and $e^s\alpha_+$ glue together to a Liouville form $\widehat{\lambda}$ on $\widehat{W}$. An important special caase of completed cobordisms is given by the \emph{symplectization} of a contact manifold $(\R \times Y,e^s\alpha)$, which we will denote by $\widehat{Y}$.

Given a manifold $P$ (with or without boundary), a \emph{$P$--parametrized family of exact symplectic cobordisms} $\mathfrak{W}$ from $Y_+$ to $Y_-$ is a fiber bundle $\mathfrak{W} \to P$ over $P$ with fiber $W_p$ at $p \in P$, a $1$--form $\lambda$ on $\mathfrak{W}$ and a bundle map $\iota^\pm:P \times Y^\pm \to W$ such that $(W_p,\lambda|_{W_p},\iota_p)$ is an exact symplectic cobordism for each $p \in P$. 
\end{review}

\begin{review} (Almost complex structures) Recall that a compatible almost complex structure $J_\xi$ on the symplectic vector bundle $\xi$ gives rise to an $\R$--invariant compatible almost complex structure $J$ on the symplectization $\R \times Y$, defined by
\[J(\partial_s) = R_\alpha, \qquad J(R_\alpha) = -\partial_s, \qquad J|_\xi = J_\xi.\]
We denote the set of such translation invariant $J$ on $\R \times Y$ by $\mathcal{J}(Y)$.

An almost complex structure $J$ on a completed exact symplectic cobordism $\widehat{W}$ as above is called \textit{compatible} if it has the following properties.
\begin{enumerate}
\item[$\cdot$] On the ends $[0,\infty) \times Y_+$ and $(-\infty,0] \times Y_-$, $J$ restricts to $\R$--invariant complex structures arising from $J_+ \in \mathcal{J}(Y_+)$ and $J_- \in \mathcal{J}(Y_-)$, respectively.
\item[$\cdot$] The almost complex structure $J$ is compatible with the symplectic form $d\lambda$. 
\end{enumerate}
We let $\mathcal{J}(W)$ denote the set of all such compatible almost complex structures on a given exact symplectic cobordism $W$. More generally, given a $P$--parametrized family of exact symplectic cobordisms $\mathfrak{W}$, we denote by $\mathcal{J}(\mathfrak{W})$ the space of smooth, fiberwise almost complex structures $\mathfrak{J}$ such that $J_p \in \mathcal{J}(W_p)$ for each $p \in P$.

We note that $\mathcal{J}(W)$ is contractible for any $W$ (see for instance \cite[Proposition 4.11]{mcduff2017}). This implies that the space of families $\mathcal{J}(\mathfrak{W})$ is also contractible, and that any family $\mathfrak{J}_{\partial P} \in \mathcal{J}(\mathfrak{W}|_{\partial P})$ over $\partial P$ extends to a family $\mathfrak{J} \in \mathcal{J}(\mathfrak{W})$ over all of $P$.
\end{review}

As with contact manifolds, we are interested in a particular family of examples of exact symplectic cobordisms related to ellipsoid embeddings.

\begin{notation}[Cobordisms of embeddings]
\label{example:cobordism_of_embedding}  Let $E(a)$ and $E(b)$ be irrational ellipsoids. Given a symplectic embedding $\varphi: E(a) \to \op{int}(E(b))$, we denote by $W_{\varphi}$ the exact symplectic cobordism given by 
\begin{equation} \label{equation:cobordism_of_embedding} W_\varphi := E(b) \setminus \op{int} (\varphi(E(a))), \qquad \iota_+ := \op{Id}|_{\partial E(b)}, \qquad \iota_- := \varphi|_{\partial E(a)}.
\end{equation}More generally, let $P$ be a compact manifold with boundary and $\Psi:P \times E(a) \to \op{int}(E(b))$ be a $P$--parametrized family of symplectic embeddings. We then acquire a family of cobordisms $\mathfrak{W}_\Psi$ with fiber $(\mathfrak{W}_{\Psi_p},\lambda_{\Psi_p})$ given by (\ref{equation:cobordism_of_embedding}). 

In this context, we label the simple Reeb orbits of $\partial E(b)$ by $\gamma^{+}_{i}$ and the simple Reeb orbits of $\partial E(a)$ by $\gamma^{-}_{i}$. The simple Reeb orbits of the negative boundary of $W_\varphi$ are, of course, the images $\varphi(\gamma^-_i)$ and will be denoted as such. Furthermore, if the image $\op{Im}(\Psi_p)$ of $\Psi_p$ is independent of $p$ sufficiently close to $\partial P$, then we let $W_{\partial P}$ denote $E(b) \setminus \Psi_p(E(a))$ for any $p \in \partial P$ and we let $\lambda_{\partial P}$ denote the Liouville form. Note that in this case, the cobordisms $(W_{\Psi_p},\lambda_{\Psi_p},\iota_{\Psi_p})$ for $p \in \partial P$ differ only by the boundary inclusion $\iota_{\Psi_p}$. In situations where $\iota_{\Psi_p}$ plays no role, we will often not distinguish between $(W_{\Psi_p},\lambda_{\Psi_p},\iota_{\Psi_p})$ for different $p \in \partial P$.
\end{notation}

\subsection{Holomorphic curves and neck stretching} \label{subsection:holomorphic_curves_and_neck_stretching}

The proof of Theorem \ref{theorem:maintheorem} is centered around the analysis of certain moduli spaces of holomorphic curves. In this section, we give a quick overview of holomorphic curves, SFT compactness, and SFT neck stretching.

\begin{definition}[Holomorphic Curve] Let $(W,\lambda)$ be an exact symplectic cobordism from $(Y_+,\alpha_+)$ to $(Y_-,\alpha_-)$, equipped with an almost complex structure $J \in \mathcal{J}(W)$. Let $(\Sigma,j)$ be a Riemann surface acquired by removing a finite set $P_+$ of positive punctures and a finite set $P_-$ of negative punctures from a closed Riemann surface $\overline{\Sigma}$. Finally, let $\Gamma^* = \{\gamma^*_p \; | \; p \in P_*\}$ be a set of Reeb orbits in $Y_*$ for each $* \in \{+,-\}$.

\noindent A (parametrized) holomorphic curve $u:(\Sigma,j) \to (\widehat{W},J)$ asymptotic to $\Gamma^+$ at $P_+$ and $\Gamma^-$ at $P_-$ is a smooth map such that
\begin{itemize}
\item[$\cdot$] $u$ is $(j,J)$--holomorphic, i.e. $J_{u(p)} \circ du_p = du_p \circ j_p$ for all $p \in \Sigma$ and
\item[$\cdot$] for any $p \in P_*$ for $* \in \{+,-\}$, there exists a holomorphic chart $\varphi:S^1 \times \R^* \simeq \overline{\Sigma}$ with $[u \circ \varphi](S^1 \times \R^*) \subset Y_* \times \R^* \subset \widehat{W}$ and
\[\lim_{r \to * \infty} \varphi(\theta,r) = p, \qquad \lim_{r \to * \infty} [\pi_\R \circ u \circ \varphi](\theta,r) = * \infty, \qquad \lim_{r \to *\infty} [\pi_{Y_*} \circ u \circ \varphi](\cdot,r) = \gamma^*_p.
\]
 
\end{itemize}The left-most limit above is taken in the $C^0$--topology. As an alternative to the last two conditions above, we may assert that the limit of $u \circ \varphi(\cdot,\cdot * R)$ converges in $C^0$ to a parametrization of the \emph{trivial cylinder} $\R \times \gamma^*_p$ as $R \to \infty$.

Two (parametrized) holomorphic curves $u:\Sigma \to \widehat{W}$ and $u':\Sigma' \to \widehat{W}$ are equivalent if there is a biholomorphism $\varphi:\Sigma \to \Sigma'$ with $u = u' \circ \varphi$. An (unparametrized) holomorphic curve is a parametrized holomorphic curve up to this equivalence relation. The curves in this paper will be unparametrized, unless otherwise specified.
 \end{definition}

We now provide the reader with brief, very simplified reviews of SFT compactness and SFT neck stretching. We refer the reader to \cite[\S 10]{behwz2003} for the original proofs and to \cite[\S 9.4]{wendl2010} for a detailed overview.

\begin{review}[SFT Compactness]
\label{review:SFT_compactness} Let $P$ be a compact manifold with boundary, and let $(Y_*,\alpha_*)$ for $* \in \{+,-\}$ be closed, nondegenerate contact manifolds. Let $\mathfrak{W}$ be a $P$--paramaterized family of exact symplectic cobordisms from $Y_+$ to $Y_-$ equipped with a $P$--parametrized family $\mathfrak{J} \in \mathcal{J}(\mathfrak{W})$ such that $J_p|_{[0,\infty)\times Y_{+}}=J_{+}$ and $J_p|_{(-\infty,0]\times Y_{-}}=J_{-}$ for some fixed almost complex structures $J_{\pm} \in \mathcal{J}(Y_\pm)$. Fix a surface $\Sigma$, acquired by taking a closed surface $\overline{\Sigma}$ and removing a finite set of punctures. Finally, consider a sequence $p_i \in P$ and $u^i:\Sigma \to (\widehat{W}_{p_i}, J_{p_i})$ of $J_{p_i}$--holomorphic curves asymptoting to collections of Reeb orbits $\Gamma^+$ (at the positive end of $\widehat{W}_{p_i}$) and $\Gamma^-$ (at the negative end of $\widehat{W}_{p_i}$) independent of $i$.

 The SFT compactness theorem states that, after passing to a subsequence, $p_i \to p \in P$ and $u^i$ converges (in the SFT Gromov topology, see \cite[\S 7.3]{behwz2003}) to a \emph{$J_p$--holomorphic building}, which is a tuple of the form
 \begin{equation}\label{equation:building_notation} v = (u^+_1,\dots,u^+_M,u^{W},u^-_1,\dots,u^-_N).\end{equation}
Here $M,N \in \Z^{\ge 0}$ are integers and the elements of the tuple (called \emph{levels}) are holomorphic maps from punctured surfaces of the form
\[u^*_j:S^*_{j} \to (\R\times Y_*, J_*) \text{ for }* \in \{+,-\} \quad\text{and}\quad u^W:S^W \to (\widehat{W}_p,J_p).\]
The maps $u^*_j$ and the map $u^W$ are considered modulo domain reparametrization, and modulo translation when the target manifold is a symplectization. The surfaces $S_{j}$ can be glued together along the boundary punctures asymptotic to matching Reeb orbits, and this glued surface $\#_j S_{j}$ is homeomorphic to $\Sigma$. 

All of the curves $u^*_j$ and $u^W$ must be asymptotic to a Reeb orbit at each positive and negative puncture. We denote the collections of positive and negative limit Reeb orbits of $u^W$ (with multiplicity) by $\Gamma^+(u^W)$ and $\Gamma^-(u^W)$, respectively, and we adopt similar notation for $u^*_j$. The asymptotics of the $u^*_j$ and $u^W$ must be compatible, in the sense that the negative ends of $u^*_j$ and the positive ends of $u^*_{j+1}$ must agree (and likewise for $u^+_M$ and $u^W$, etc.). Furthermore, we must have $\Gamma^+(u^+_1) = \Gamma^+$ and $\Gamma^-(u^-_N) = \Gamma^-$. Finally, every symplectization level $u^*_j$ must have at least one component that is not a trivial cylinder $\R \times \gamma$. 

Since $(W_p,\lambda_p)$ is an exact symplectic cobordism, one may apply Stoke's theorem to derive the following expression for the energies of the levels of $v$:
\begin{equation} \label{equation:energy_stokes_cob}
\mathcal{E}(u^W) := \int_{S^W} [u^W]^*d\lambda_p =  \sum_{\eta^+ \in \Gamma^+(u^W)} \mathcal{A}(\eta^+) - \sum_{\eta^- \in \Gamma^-(u^W)} \mathcal{A}(\eta^-),
\end{equation}
\begin{equation} \label{equation:energy_stokes_symp}
\mathcal{E}(u^\pm_j) := \int_{S_j} [u^\pm_j]^*d(e^t\alpha_\pm) =  \sum_{\eta^+ \in \Gamma^+(u^\pm_j)} \mathcal{A}(\eta^+) - \sum_{\eta^- \in \Gamma^-(u^\pm_j)} \mathcal{A}(\eta^-).
\end{equation}
The positivity of the energy of any holomorphic curve implies that the right hand sides of (\ref{equation:energy_stokes_cob}) and (\ref{equation:energy_stokes_symp}) are nonnegative. More generally, if we let $\mathcal{A}[\Gamma]$ denote the total action of a collection of Reeb orbits,  then we have the string of inequalities
\begin{equation} \label{equation:action_monotonicity_b}
\begin{split}
\mathcal{A}[\Gamma^-] &= \mathcal{A}[\Gamma(u^-_N)] \le \dots \le \mathcal{A}[\Gamma(u^-_1)] \le \mathcal{A}[\Gamma(u^W)] \le \\
	&\le \mathcal{A}[\Gamma(u^+_M)] \le \dots \le \mathcal{A}[\Gamma(u^+_1)] = \mathcal{A}[\Gamma^+].
\end{split}
\end{equation}

There is some additional data, beyond the holomorphic curves themselves, associated to a holomorphic building. However, we suppress this data since it will play no role in any of our arguments below.
\end{review}

\begin{review}[SFT Neck Stretching] \label{review:SFT_neck_stretching} Let $(Y_*,\alpha_*)$ for $* \in \{0,1,2\}$ be closed, nondegenerate contact manifolds and let $(U,\lambda_U,J_U)$ and $(V,\lambda_V,J_V)$ be a pair of exact symplectic cobordisms from $Y_0$ to $Y_1$ and $Y_1$ to $Y_2$, respectively, equipped with compatible almost complex structures $J_{R_i}$ on their completions. We denote the boundary inclusions of the contact manifolds $Y_*$ into $U$ and $V$ by $\iota^U_*$ for $* \in \{0,1\}$ and $\iota^V_*$ for $* \in \{1,2\}$.

The neck stretching domain $W_R = U \#_R V$ for parameter $R \in [0,\infty)$ is the exact symplectic cobordism from $(Y_0,e^R\alpha_0)$ to $(Y_2,e^{-R}\alpha_2)$ given by 
\begin{equation}
U \#_R V := U \sqcup_{\iota^U_1} [-R,R] \times Y_1 \sqcup_{\iota^V_1}  V.
\end{equation}
The Liouville forms and complex structures glue to give complex structure $J_R = J_U \#_R J_V$ and $\lambda_R = \lambda_U \#_R \lambda_V$ on the neck stretching domain for each parameter $R$. 

As in Review \ref{review:SFT_compactness}, fix a punctured surface $\Sigma$, and consider a sequence $R_i \in [0,\infty)$ and $u^i:\Sigma \to (\widehat{W}_{R_i}, J_{R_i})$ of $J_{R_i}$--holomorphic curves asymptoting to collections of Reeb orbits $\Gamma^+$ on $Y_0$ and $\Gamma^-$ on $Y_2$, independent of $i$. We remark that the contact forms on the contact boundaries of $(W_R,\lambda_R)$ are equivalent up to multiplication by a scalar, so the Reeb dynamics are independent of $R$ and it is sensible to refer to fixed asymptotics for the curves $u_i$.

The SFT neck stretching theorem provides a topology in which any such sequence $(R_i,u^i)$ converges (after passing to a subsequence) to a holomorphic building $v$ of the form
 \begin{equation}\label{equation:building_notation_neck} v = (u^0_1,\dots,u^0_A,u^U,u^1_1,\dots,u^1_B,u^V,u^2_1,\dots,u^2_C).\end{equation}
Here $A,B,C \in \Z^{\ge 0}$ are integers and the elements of the tuple (called \emph{levels}) are holomorphic maps from punctured surfaces of the form
\[u^*_j:S^*_{j} \to (Y_* \times \R, J_*) \text{ for }* \in \{0,1,2\},\]
\[u^U:S^U \to (\widehat{U},J_U), \qquad \text{ and }\qquad u^V:S^V \to (\widehat{V},J_V). \]
These maps are considered modulo domain reparametrization, and modulo translation when the target manifold is a symplectization. The surfaces $S_{j}$ can be glued together along the boundary punctures asymptotic to matching Reeb orbits, and this glued surface $\#_j S_{j}$ is homeomorphic to $\Sigma$.

The analogous remarks from Review \ref{review:SFT_compactness}, regarding orbit asymptotics and action monotonicity, hold for the building $v$ in (\ref{equation:building_notation_neck}).
\end{review}

\section{Proof of the main result}
\label{section:proof}

In this section, we prove Theorem \ref{theorem:maintheorem} assuming a of technical result, Lemma \ref{lemma:capping_lemma}, which is proven in \S \ref{section:topology}. Here is a brief overview of the proof to help guide the reader. 

We assume by contradiction that the map $\Phi_*$ induced by the family $\Phi$ of (\ref{equation:embedding_family}) is not injective in degree $k$. Using this assumption and the results in \S \ref{section:topology}, we find a certain family of symplectic embeddings, parametrized by a union of an odd number of $k$--tori $\sqcup_1^m T^k$ and built from $\Phi$, which is null--bordant in the space $\op{SympEmp}(E(a),E(b))$. This means that the family extends to a smooth $(k+1)$--dimensional family of symplectic embeddings $\Psi:P\to \op{SympEmp}(E(a),E(b))$ where $P$ is a smooth, compact, $(k+1)$--dimensional manifold with boundary $\partial P \simeq \sqcup_1^m T^k$. 

Using $\Psi$, we construct a moduli space of holomorphic curves $\mathcal{M}_I(\mathfrak{J})$ in completed symplectic cobordisms parametrized by $P$. Moreover, we construct an associated evaluation map $\op{ev}_I:\mathcal{M}_I(\mathfrak{J}) \to T^k$ to a $k$--torus $T^k$. We then show that the degree of this evaluation map is $1 \mod 2$ when restricted to any of the torus components of $\partial \mathcal{M}_I(\mathfrak{J})$. This is the contradiction, since the evaluation map extends to the bounding manifold $\mathcal{M}_I(\mathfrak{J})$ and so must have degree $0$.

\subsection{Moduli spaces in cobordisms} \label{subsection:moduli_spaces}

We now introduce the spaces of holomorphic curves that are relevant to our proof, and derive the salient properties of these spaces. These are generic transversality (Lemma \ref{lem:transversality}), compactness (Lemma \ref{lem:compactness}), and a point count result (Lemma \ref{lem:point_count}).

\begin{notation}[Curve domains] \label{notation:curve_domains} Fix a subset $I \subset \{1,\dots,n\}$ and denote by $|I|$ the size of $I$. For the remainder of \S \ref{section:proof}, we adopt the following notation. 

For each $i \in I$, let $\Sigma_i$ denote a copy of the twice punctured Riemann sphere $\R \times S^1 \simeq \mathbb{CP}^1 \setminus \{0,\infty\}$ with the usual complex structure $j_{\op{\C P^1}}$ and let $\overline{\Sigma}_i$ denote the corresponding copy of $\C P^1$ itself. Let $p^+_i$ and $p^-_i$ denote the points $\infty$ and $0$ in the copy $\overline{\Sigma}_i$ of $\C P^1$. We refer to $p^+_i$ and $p^-_i$ as the positive and negative punctures of $\Sigma_i$, respectively. Denote by $\Sigma_I$ the disjoint union $\sqcup_{i \in I} \Sigma_i$. 
\end{notation}

\begin{definition}[Unparametrized moduli space] \label{definition:unparametrized_moduli_space}
Let $E(a)$ and $E(b)$ be irrational ellipsoids, $W$ be an exact symplectic cobordism  from $\partial E(b)$ to $\partial E(a)$ and $J \in \mathcal{J}(W)$ be an admissible almost complex structure on $W$.

\noindent We define the moduli space $\mathcal{M}_I(J)$ by
\begin{equation}
\mathcal{M}_I(J) := \left \lbrace u:\Sigma_I \to \widehat{W} \; \middle| \; \begin{array}{c}
(du)^{0,1}_{J} = 0 \vspace{5pt}\\

u \to \gamma^\pm_i \text{ at }p^\pm_i\end{array}\right\rbrace\big/(\C^\times)^{|I|}.
\end{equation}

That is, $u:\Sigma_I \to \widehat{W}$ is a $J$--holomorphic curve such that $u$ is asymptotic to the trivial cylinder over $\iota_+(\gamma^+_i)$ in $[0,\infty) \times \partial_+W_{\varphi} \simeq [0,\infty) \times \iota_+(\partial E(b))$ at the puncture $p^+_i$ and $u$ is asymptotic to the trivial cylinder over $\iota_-(\gamma^-_i)$ in $(-\infty,0] \times \partial_-W_{\varphi} \simeq (-\infty,0] \times \iota_-(\partial E(a))$ at the puncture $p^-_i$, for each $i \in I$. We quotient the space of such maps by the group of domain reparametrizations, which is the product $(\C^\times)^{|I|}$ of the biholomorphism groups $\C^\times$ of each component cylinder $\Sigma_i \simeq \R \times S^1$.
\end{definition}

\begin{definition}[Parametrized moduli space over $P$] Let $P$ be a manifold with boundary, $\mathfrak{W}$ be a $P$--parametrized family of exact symplectic cobordisms $\partial E(b) \to \partial E(a)$, and $\mathfrak{J} \in \mathcal{J}(\mathfrak{W})$ be a $P$--parametrized family of complex structure.

\noindent We define the parametric moduli space $\mathcal{M}_I(\mathfrak{J})$ to be the space of pairs
\begin{equation}\label{equation:parametric_moduli_space} \mathcal{M}_I ( \mathfrak{J}) := \left\lbrace (p,u) \; \middle|  \; p \in P,\; u \in \mathcal{M}_I( J_p)\right\rbrace. \end{equation}
\end{definition}

\noindent Our first order of business is establishing transversality for these moduli spaces.

\begin{lemma}[Transversality] 
 \label{lem:transversality}
 Let $E(a)$ and $E(b)$ be irrational symplectic ellipsoids with parameters $a = (a_1,\dots,a_n)$ and $b = (b_1,\dots,b_n)$ satisfying
 \begin{equation}\label{equation:transversality_hypotheses} a_i < b_i, \qquad a_i < 2a_1, \quad\text{and}\quad b_i < 2b_1 \qquad \text{for all $i$ with $1 \le i \le n$.}\end{equation}
 Let $\mathfrak{W}$ be a $P$--parametrized family of exact cobordisms from $\partial E(b)$ to  $\partial E(a)$. Then:
 \begin{itemize}
 \item[(a)] There exists a generic subset $\mathcal{J}^{\op{reg}}(\mathfrak{W}|_{\partial P}) \subset \mathcal{J}(\mathfrak{W}|_{\partial P})$ such that for any $\mathfrak{J}_{\partial P} \in \mathcal{J}^{\op{reg}}(\mathfrak{W}|_{\partial P})$   all $u \in \mathcal{M}_I(J_{\partial P})$ is parametrically Fredholm regular (see \cite[Remark 7.4]{wendl2016} and \cite[Definition 4.5.5]{wendl2010}). The moduli space $\mathcal{M}_I(\mathfrak{J}_{\partial P})$ is then a $(\op{dim}(P) - 1)$--dimensional manifold.
 \item[(b)] Given any $\mathfrak{J}_{\partial P}$ as in (a), there exists a $\mathfrak{J} \in \mathcal{J}^{\op{reg}}(\mathfrak{W})$ such that $\mathfrak{J}|_{\partial P} = \mathfrak{J}_{\partial P}$ and such that every $(p,u) \in \mathcal{M}_I(\mathfrak{J})$ is parametrically Fredholm regular (see \cite[Remark 7.4]{wendl2016} and \cite[Definition 4.5.5]{wendl2010}). In particular, $\mathcal{M}_I(\mathfrak{J})$ is a $\op{dim}(P)$--dimensional manifold with boundary $\partial \mathcal{M}_I(\mathfrak{J}) \simeq \mathcal{M}_I(\mathfrak{J}_{\partial P})$.
\end{itemize}
\end{lemma}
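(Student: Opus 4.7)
The plan is to deduce this lemma from the standard universal moduli space machinery, after checking two facts specific to our setup: the Fredholm index of each cylinder is zero, and every such cylinder is somewhere injective.

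First I would compute the expected dimension. Each component of $u \in \mathcal{M}_I(J)$ is a cylinder $u_i \colon \Sigma_i \to \widehat{W}$ asymptotic to the simple orbits $\gamma^+_i$ (positively) and $\gamma^-_i$ (negatively). Since $W$ sits inside $\C^n$, the canonical trivialization of $T\C^n$ restricts to a trivialization $\tau$ of $u_i^* TW$ with $c_1^\tau(u_i^*TW) = 0$, and $\chi(\Sigma_i) = 0$, so the standard Fredholm index formula (see \cite[Theorem 7.2]{wendl2016}) reduces to $\op{ind}(u_i) = \CZ(\gamma^+_i) - \CZ(\gamma^-_i)$. The hypotheses $a_i < 2a_1$ and $b_i < 2b_1$ ensure that exactly $i$ Reeb orbits on $\partial E(a)$ have action at most $a_i$ (and similarly for $\partial E(b)$), so formula (\ref{equation:CZ_index_for_ellipsoids}) yields $\CZ(\gamma^\pm_i) = n - 1 + 2i$ and thus $\op{ind}(u_i) = 0$. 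Summing over $i \in I$, the parametric Fredholm index of $\mathcal{M}_I(\mathfrak{J})$ equals $\op{dim}(P)$, and equals $\op{dim}(P) - 1$ on $\partial P$, matching the stated dimensions.

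Next I would verify somewhere injectivity of each $u_i$. If $u_i$ factored as $v \circ \phi$ through a branched cover of degree $d > 1$, its asymptotic orbits would be $d$--fold iterates of the asymptotics of $v$; since $\gamma^\pm_i$ are simple, this forces $d = 1$, so $u_i$ is simple. With the index computation and simplicity in hand, part (a) follows from a standard universal moduli space argument: the space $\widetilde{\mathcal{M}}_I = \{(p, u, J) : J \in \mathcal{J}(W_p),\; u \in \mathcal{M}_I(J)\}$ is a Banach manifold because the combined linearization in $u$ and $J$ is surjective at every solution, by local perturbations of $J$ at an injective point of $u_i$. Applying Sard--Smale to the projection $\widetilde{\mathcal{M}}_I|_{\partial P} \to \mathcal{J}(\mathfrak{W}|_{\partial P})$ then produces the required generic set $\mathcal{J}^{\op{reg}}(\mathfrak{W}|_{\partial P})$.

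For (b), I would use a relative version of the same argument. Given $\mathfrak{J}_{\partial P}$ regular, I extend it to some $\mathfrak{J}_0 \in \mathcal{J}(\mathfrak{W})$ using the contractibility of $\mathcal{J}(\mathfrak{W})$, then consider the affine subspace of $\mathcal{J}(\mathfrak{W})$ consisting of families that agree with $\mathfrak{J}_0$ on a fixed collar neighborhood of $\partial P$. Repeating the universal moduli space construction with perturbations restricted to this subspace---and using that transversality is an open condition already satisfied on $\partial P$---yields a generic $\mathfrak{J}$ with the desired properties, and the identification $\partial \mathcal{M}_I(\mathfrak{J}) \simeq \mathcal{M}_I(\mathfrak{J}_{\partial P})$ then comes directly from the manifold--with--boundary structure of $P$. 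The main technical hurdle in both parts is verifying the surjectivity of the universal linearization, i.e.\ that $J$--perturbations supported in the interior of $W_p$ suffice to cover the cokernel of the fiberwise Cauchy--Riemann operator; this is a standard consequence of somewhere injectivity, and the proof transfers essentially verbatim from \cite[Chapter 7]{wendl2016} to our parametric cobordism setting.
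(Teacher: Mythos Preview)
Your proposal is correct and follows essentially the same approach as the paper: both arguments verify somewhere injectivity from the simplicity of the asymptotic orbits, compute $\CZ(\gamma^\pm_i) = n-1+2i$ from the hypotheses (\ref{equation:transversality_hypotheses}) to get index zero, and then invoke the parametric transversality machinery of \cite[\S 7]{wendl2016} (the paper cites the results directly, while you sketch the underlying universal moduli space and Sard--Smale argument). The only cosmetic difference is that you trivialize via the ambient $T\C^n$ whereas the paper builds the trivialization from capping disks, but both yield the canonical Conley--Zehnder indices and vanishing relative $c_1$.
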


\begin{proof}  This essentially follows from the general transversality results of \cite{wendl2010} and \cite[\S 7]{wendl2016}, which we now discuss in some detail.

First, observe that every curve $u \in \mathcal{M}_I(J_{\partial P})$ must be somewhere injective (see \cite[p. 123]{wendl2016}) for any choice of $J_{\partial P}$. Indeed, note that all of the orbits $\gamma^-_i$ and $\gamma^+_i$ are simple. This means that none of them can be factored as $\eta \circ \varphi$ where $\eta$ is a closed Reeb orbit and $\varphi:S^1 \to S^1$ is a $k$--fold cover with $k \ge 2$. This implies that $u$ is simple as well, i.e. that $u$ cannot factor as $v \circ \phi$ where $v:\Sigma' \to \widehat{W}_{\partial P}$ is a $J_{\partial P}$--holomorphic curve and $\phi:\Sigma_I \to \Sigma'$ is a holomorphic branched cover. Simple curves are somewhere injective. In fact, these conditions are equivalent in our setting, see \cite[Theorem 6.19]{wendl2016}. The same reasoning shows that any curve $u$ appearing as a factor in a point $(p,u) \in \mathcal{M}_I(\mathfrak{J})$ is somewhere injective for any choice of $\mathfrak{J}$.

(Part (a)) we now apply the appropriate parametric version of transversality (see \cite[Theorems 7.1--7.2]{wendl2016}, \cite[Remark 7.4]{wendl2016} and \cite[\S 4.5]{wendl2010}). These results state that there exists a generic (Baire category 2) set $\mathcal{J}^{\op{reg}}(\mathfrak{W}|_{\partial P}) \subset \mathcal{J}(\mathfrak{W}|_{\partial P})$ with the following property: for any $\mathfrak{J}_{\partial P} \in \mathcal{J}^{\op{reg}}(\mathfrak{W}|_{\partial P}) $, any point $(p,u) \in \mathcal{M}_I(\mathfrak{J}_{\partial P})$ where $u$ is somewhere injective is (parametrically) Fredholm regular. In particular, due to the discussion above, every $(p,u) \in \mathcal{M}_I(\mathfrak{J}_{\partial P})$ is Fredholm regular for such a choice of $\mathfrak{J}_{\partial P}$. The dimension of $\mathcal{M}_I(\mathfrak{J}_{\partial P})$ at a point $(p,u)$ is given by the formula 
\begin{equation}\label{equation:index_formula_1} \op{dim}_{(p,u)}( \mathcal{M}_I(\mathfrak{J}_{\partial P})) = \op{dim}(\partial P) + \op{ind}(u),\end{equation}
where $\op{ind}(u)$ denotes the Fredholm index of $u$, given by the following index formula.
\begin{equation}\label{equation:index_formula_2} \op{ind} (u) = \sum_{i \in I} \left((n- 3)\chi(\Sigma_i) + 2c_1(u|_{\Sigma_i},\tau) + \CZ(\gamma^+_i,\tau) - \CZ(\gamma^-_i,\tau)\right).\end{equation} 
The Conley--Zehnder indices $\CZ(\gamma^\pm,\tau)$ and relative Chern numbers $c_1(u|_{\Sigma_i},\tau)$ are as in Review \ref{review:conley_zehnder_indices}, and $\tau$ denotes a trivialization of $\xi$ over $\sqcup_i (\gamma^+_i \sqcup \gamma^-_i)$. 

To simplify the dimension formula, note that $\partial E(a)$ and $\partial E(b)$ are simply-connected and we have assumed that $H_2(W_p;\Z) = 0$. Thus we may choose $\tau$ by taking capping disks $D_i$ for $\gamma^-_i$, thus inducing trivializations of $\xi$ along $\gamma^-_i$, and then extending $\tau$ to a trivialization along $\Sigma_i$ to induce trivializations of $\xi$ along $\gamma^+_i$. The resulting trivialization has $c_1(u|_{\Sigma_i},\tau) = 0$, $\CZ(\gamma^+_i,\tau) = \CZ(\gamma^+_i)$, and $\CZ(\gamma^-_i,\tau) = \CZ(\gamma^-_i)$. Here $\CZ(\gamma^+_i)$ and $\CZ(\gamma^-_i)$ denote the canonical indices described in Review \ref{review:conley_zehnder_indices}. Thus, using this special choice of $\tau$ and noting that $\chi(\Sigma_i)=0$, the formulas (\ref{equation:index_formula_1}-\ref{equation:index_formula_2}) simplify to
\begin{equation}\label{equation:index_formula_3} \op{dim}(\mathcal{M}_I(\mathfrak{J}_{\partial P})) = \op{dim}(P) - 1 + \sum_{i \in I} \left(\CZ(\gamma^+_i) - \CZ(\gamma^-_i)\right).\end{equation}
Finally, we observe that the hypotheses (\ref{equation:transversality_hypotheses}) and the Conley--Zehnder index formula (\ref{equation:CZ_index_for_ellipsoids}) imply that $\CZ(\gamma^+_i) = \CZ(\gamma^-_i) = n - 1 + 2i$. Therefore, the moduli space $\mathcal{M}_I(\mathfrak{J}_{\partial P})$ is $(\op{dim}(P) - 1)$--dimensional, and we have established (a). 

(Part (b)) This part follows from the same parametric transversality results (\cite[Remark 7.4]{wendl2016} and \cite[\S 4.5]{wendl2010}), with the added fact (see \cite[Remark 7.4]{wendl2016} in particular) that we may pick $\mathfrak{J} \in \mathcal{J}(\mathfrak{W})$ to agree with a fixed parametrically transverse $\mathfrak{J}_{\partial P}$ on the boundary. This concludes our discussion for this lemma.\end{proof}

Next we discuss compactness. For the next lemma, we will refer extensively to the review of SFT compactess (Review \ref{review:SFT_compactness}) provided in \S \ref{subsection:holomorphic_curves_and_neck_stretching}.

\begin{lemma}[Compactness]
\label{lem:compactness}  Let $E(a)$ and $E(b)$ be irrational symplectic ellipsoids with parameters $a = (a_1,\dots,a_n)$ and $b = (b_1,\dots,b_n)$. Let $P$ is a compact manifold with boundary, let $\mathfrak{W}$ be a $P$--family of exact symplectic cobordisms with $H_1(W_p) = H_2(W_p) = 0$ and let $\mathfrak{J} \in \mathcal{J}^{\op{reg}}(\mathfrak{W})$ be a family of regular almost complex structures. 

\noindent Then the moduli space $\mathcal{M}_I(\mathfrak{J})$ has the following compactness properties:
\begin{itemize}
	\item[(a)] If $a_n < 2a_1$ and $b_n < 2b_1$, and $P$ has dimension less than or equal to 1, then the moduli space $\mathcal{M}_I(\mathfrak{J})$ is compact.
	\item[(b)] If moreover $a_i < b_i < a_{i+1}$ for all $i\in\{1,\dots,n-1\}$ and $b_n < 2a_1$, then for $P$ of any dimension $\mathcal{M}_I(\mathfrak{J})$ is compact.
\end{itemize}
\end{lemma}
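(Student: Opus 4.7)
The plan is to apply SFT compactness (Review \ref{review:SFT_compactness}) and then systematically eliminate nontrivial symplectization levels using action monotonicity together with Fredholm index bounds. Given a sequence $(p_k, u^k) \in \mathcal{M}_I(\mathfrak{J})$, SFT compactness yields a subsequential limit $(p, v)$ where $v$ is a $J_p$--holomorphic building in $\widehat{W}_p$. Since $\Sigma_I = \bigsqcup_{i \in I} \Sigma_i$ is a disjoint union of cylinders and punctures are matched within connected components, $v$ splits as $\bigsqcup_{i \in I} B_i$, where each sub-building $B_i$ has external asymptotes $\gamma^+_i$ and $\gamma^-_i$. It therefore suffices to show that each $B_i$ consists of a single cobordism cylinder.

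First I would show that each $B_i$ is a chain of simple cylinders. By action monotonicity (\ref{equation:action_monotonicity_b}), every internal matching orbit on $\partial E(b)$ has action at most $b_i \le b_n$, and the hypothesis $b_n < 2b_1$ rules out double covers (whose action is $\ge 2b_1$), so every such orbit is simple. For matchings on $\partial E(a)$, the relevant action bound is at most $a_n < 2a_1$, again forcing simplicity. The same inequalities rule out any symplectization pair of pants with one positive asymptote $\gamma^\pm_p$ and two negative asymptotes $\gamma^\pm_{q_1}, \gamma^\pm_{q_2}$, since positivity of area would demand $a_p \ge a_{q_1} + a_{q_2} \ge 2a_1 > a_n \ge a_p$ (and analogously on $\partial E(b)$). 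A tree-combinatorics argument on the building, using that symplectization components have exactly one positive puncture and at least one negative, that the external asymptotes of $B_i$ are each unique, and that symplectization components therefore cannot merge going downward, then forces every component of $B_i$ to be a cylinder. Label the resulting chain $C^+_1, \ldots, C^+_M$ (positive symplectization), $C^W$ (cobordism), $C^-_1, \ldots, C^-_N$ (negative symplectization), with internal asymptotes $\gamma^+_{\sigma_j}$ and $\gamma^-_{\tau_j}$ indexed so that $\sigma_1 = i$ and $\tau_{M+N+2} = i$.

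The Fredholm index calculus then closes the argument. Using $\CZ(\gamma^\pm_k) = n - 1 + 2k$ from (\ref{equation:CZ_index_for_ellipsoids}), each nontrivial symplectization cylinder has Fredholm index $2(\sigma_j - \sigma_{j+1}) \ge 2$ (resp.\ $2(\tau_{j+1} - \tau_j) \ge 2$), and strict action decrease of integer-valued indices forces $\sigma_{M+1} \le i - M$ and $\tau_{M+2} \ge i + N$. For part (a), parametric Fredholm regularity of $C^W$ requires $\op{ind}(C^W) + \dim P = 2(\sigma_{M+1} - \tau_{M+2}) + \dim P \ge 0$, which combined with the previous bounds yields $M + N \le \dim P / 2$; the hypothesis $\dim P \le 1$ then forces $M = N = 0$. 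For part (b), action monotonicity across $C^W$ gives $b_{\sigma_{M+1}} \ge a_{\tau_{M+2}}$, and the interleaving hypothesis $b_{\sigma_{M+1}} < a_{\sigma_{M+1}+1}$ (with the case $\sigma_{M+1} = n$ handled by the automatic bound $\tau_{M+2} \le n = \sigma_{M+1}$) forces $\tau_{M+2} \le \sigma_{M+1}$; together with the symplectization bounds this gives $i + N \le i - M$, so $M = N = 0$ for any $\dim P$. In both cases the limit building is a single cobordism cylinder and hence lies in $\mathcal{M}_I(\mathfrak{J})$, yielding compactness.

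The hard part will be the structural step of showing $B_i$ is a chain of simple cylinders. Ruling out iterated orbits and symplectization pants is clean from action, but one must also exclude capping disks in the cobordism level and branched covers of trivial cylinders. The former reduces, via the building tree, to the impossibility of forking in the symplectization (sym components cannot merge downward, and the external asymptotes of $B_i$ are each unique). The latter is obstructed because every curve in $\mathcal{M}_I(\mathfrak{J})$ has only simple asymptotes, so the components of any SFT limit remain somewhere injective and, for generic $\mathfrak{J}$, cannot degenerate to branched covers of trivial cylinders.
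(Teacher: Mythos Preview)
Your overall strategy---SFT compactness, reduce to a single component, then eliminate symplectization levels by a mixture of action and Fredholm index---is the same as the paper's. However, there is a genuine gap in the structural step. You assert that ``symplectization components have exactly one positive puncture and at least one negative,'' and your tree-combinatorics argument that the cobordism level $C^W$ is a cylinder hinges on the second clause. But the action inequality $a_n < 2a_1$ only rules out components with one positive and \emph{two or more} negative ends; it says nothing about planes (one positive end, zero negative ends) in the negative symplectization. Such a plane capping a simple orbit $\gamma^-_j$ has positive energy $a_j$ and Fredholm index $2n-4+2j \ge 2$, so neither action nor a naive index bound excludes it. With planes allowed, $C^W$ could have $m \ge 2$ simple negative ends, $m-1$ of which are eventually capped by planes below, and your chain picture collapses. (Your own ``hard part'' paragraph seems to sense this, but the reduction you sketch there does not actually address planes in the negative symplectization.)

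The paper avoids this by reversing the order of the argument in part (a): once the positive symplectization levels are known to be cylinders (which you do argue correctly, since a plane there would disconnect the building from the mandatory cobordism level), the cobordism level $u^W$ has a single simple positive end and is therefore somewhere injective. Parametric Fredholm regularity then applies \emph{before} you know $u^W$ is a cylinder, and the index formula---together with the constraint that at least one negative end must have index $\ge l$ so as to reach the external end $\gamma^-_l$ through action-decreasing levels below---forces $|\Gamma^-(u^W)| = 1$. For part (b), the paper's argument is purely action-based and simpler than yours: the interleaving $a_i < b_i < a_{i+1}$ and $b_n < 2a_1$ pin down $\Gamma^\pm(u^\pm_j)$ as the singleton $\{\gamma^\pm_l\}$ directly, so every symplectization level has zero energy and is trivial, with no Fredholm input needed. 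You can repair your proof by adopting either of these routes at the appropriate point.
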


\begin{proof} We prove (a) and (b) by showing that any broken building $u$ arising as a limit of a sequence in $\mathcal{M}_I(\mathfrak{J})$ must consist of a single cobordism level, and thus must be an element of $\mathcal{M}_I(\mathfrak{J})$. Note that the hypotheses of (b) imply those of (a).

Thus let $(p_i,u^i)$ be a sequence in $\mathcal{M}_I(\mathfrak{J})$. Since $P$ is compact, we may pass to a subsequence so that $p_i \to p \in P$. By SFT compactness, after passing to a subsequence $u^i$ converges to a limit building $v$. We use the notation of Review \ref{review:SFT_compactness} for this building. By considering components of $\Sigma_I$ and $\#_j S_{j}$, we can assume that $|I| = 1$, i.e. that $\Sigma_I$ has one component and each $u^i$ is asymptotic to $i$--independent ends $\gamma^+_l$ and $\gamma^-_l$ where $1 \le l \le n$. 

(Part (a)) First, consider a positive symplectization level $u^+_k$. By action monotonicity, we know that $\mathcal{A}[\Gamma^\pm(u^+_k)] \le \mathcal{A}(\gamma^+_l)$. This implies, due to the hypothesis of (a), that each $\Gamma^\pm(u^+_k)$ is a singleton consisting of an embedded orbit of $E(b)$, i.e. that there is a sequence $\{\alpha_k\}_0^M$ such that $\alpha_0 = l$ and 
\[\Gamma^-(u^+_k) = \{\gamma^+_{\alpha_k}\} \quad\text{ for all }k \in \{1,\dots,M\}.\]
Since the genus of the building $v$ must be equal to that of the $u^i$, each $u^+_k$ must therefore be a cylinder from $\gamma^+_{\alpha_{k-1}}$ to $\gamma^+_{\alpha_k}$.

Next, consider the cobordism level $u^W$. Since $\Gamma^+(u^W) = \Gamma^-(u^+_M) = \{\gamma^+_{\alpha_M}\}$, we know by the same discussion as in Lemma \ref{lem:transversality} that $u^W$ is somewhere injective and the parametric moduli space $\mathcal{M}(p,u^W)$ containing $(p,u^W)$ is parametrically Fredholm regular. Therefore we know $\mathcal{M}(p,u^W)$ is a manifold of dimension
\begin{equation} \label{equation:compactness_dim_1}
\op{dim}(\mathcal{M}(p,u^W)) = \op{dim}(P) + \op{ind}(u^{W}) \ge 0.
\end{equation}
Here the index is given by the following formula, similar to (\ref{equation:index_formula_2}):
\[\op{ind}(u) = (n- 3)\chi(S^W) + 2c_1(u^W,\tau) + \sum_{\eta^+ \in \Gamma^+(u^W)} \CZ(\eta^+,\tau) - \sum_{\eta^- \in \Gamma^-(u^W)} \CZ(\eta^-,\tau).\]
As in Lemma \ref{lem:transversality}, the Conley--Zehnder indices $\CZ(\eta^\pm,\tau)$ and relative Chern numbers $c_1(u^W,\tau)$ are as in Review \ref{review:conley_zehnder_indices}, and $\tau$ is a trivialization of $\xi$ over $\Gamma^+(u^W) \sqcup \Gamma^-(u^W)$. 

Now we simplify this dimension formula. Due to the assumption that $H_1(W_p) = H_2(W_p)$ for each fiber $W_p$ of $\mathfrak{W}$, we can choose a trivialization $\tau$ extending over $u^W$ such that $\op{CZ}(\eta^\pm,\tau) = \op{CZ}(\eta^\pm)$ and $c_1(u^W,\tau) = 0$. Furthermore, we must have $g(S^W) = 0$ since otherwise the total genus of $v$ would be greater than $0$. This implies that $\chi(S^W) = 1 - |\Gamma^-(u^W)|$. Thus the index formula simplifies to the following.
\begin{equation} \label{equation:compactness_index_formula2}
\op{dim}(\mathcal{M}(p,u^W)) =\op{dim}(P) + (n-3) + \op{CZ}(\gamma^+_{\alpha_M}) - \sum_{\eta^- \in \Gamma^-(u^W)} \left((n-3) + \op{CZ}(\eta^-)\right)
\end{equation}Applying the hypothesis (a) and the CZ index formula (\ref{equation:CZ_index_for_ellipsoids}) in Example \ref{example:ellipsoid_dynamics}, we note that  
\[(n-3) + \op{CZ}(\gamma^\pm_i) = 2n - 4 + 2i \quad\text{and}\quad (n-3) + \op{CZ}((\gamma^\pm_i)^m) \ge 4n - 2 \]
for any simple orbit $\gamma^\pm_i$ and any $m \ge 2$.

Since $\op{dim}(P) \le 1$, this implies that (\ref{equation:compactness_index_formula2}) is negative if either $|\Gamma^-(u^W)| \ge 2$, or $\eta^- = (\gamma^-_i)^m$ for $m \ge 2$ and some $i$, or if $\eta^- = \gamma^-_{m_0}$ for $m_0 > \alpha_M$ (where $\gamma^+_{\alpha_M}$ is the positive end of $u^W)$). Thus we must have $\Gamma^-(u^W) = \{\gamma^-_{\beta_0}\}$ for some $\beta_0 \le \alpha_M$.

Finally, we can argue analogously to the positive symplectization case to show that there is a sequence $\{\beta_k\}_0^N$ such that $\beta_N = l$ and 
\[\Gamma^+(u^-_k) = \{\gamma^-_{\beta_k}\} \quad\text{ for all }k \in \{1,\dots,N\}.\]
We have thus shown that every level of $v$ is Fredholm regular and cylindrical. Therefore we have the following inequalities of the Conley--Zehnder indices.
\[
\op{CZ}(\gamma^-_l) = \op{CZ}(\gamma^-_{\beta_N}) \le \dots \le \op{CZ}(\gamma^-_{\beta_0}) \le \op{CZ}(\gamma^+_{\alpha_M}) \le \dots \le \op{CZ}(\gamma^+_{\alpha_0}) = \op{CZ}(\gamma^+_l).
\]
Since $\op{CZ}(\gamma^-_l) = \op{CZ}(\gamma^+_l)$, we thus conclude that every symplectization level $u^\pm_k$ is index $0$. This implies that they are somewhere injective branched covers of trivial cylinders, which must in fact be trivial cylinders. This is only possible if $M = N = 0$ and thus $v$ only has a cobordism level.

(Part (b)) Begin by considering a positive symplectization level $u^+_j$ of $v$. Due to action monotonicity (\ref{equation:action_monotonicity_b}), the collections $\Gamma^+(u^+_j)$ and $\Gamma^-(u^+_j)$ of positive and negative limit Reeb orbits of $(Y_+,\alpha_+) \simeq (\partial E(b),\lambda|_{\partial E(b)})$ must satisfy
\[a_{l_i}= \mathcal{A}(\gamma^-_{l_i}) \le \mathcal{A}[\Gamma^-(u^+_j)] \le \mathcal{A}[\Gamma^+(u^+_j)] \le \mathcal{A}(\gamma^+_{l_i}) = b_{l_i}.\]
Consider $\Gamma^+(u^+_j)$ only. Due to the hypotheses of (b), $\Gamma^+(u^+_j)$ cannot contain either a copy of $\gamma^+_r$ for $r > l_i$ or a copy of an iterate $(\gamma^+_r)^m$ for any $m \ge 2$ and any $r$. Otherwise, we would have $\mathcal{A}[\Gamma^+(u^+_j)] > b_{l_i}$. This implies that $\Gamma^+(u^+_j)$ can only contain Reeb orbits $\gamma^+_r$ for $r \le l_i$. Moreover, since $\mathcal{A}[\Gamma^+(u^+_j)] \ge a_{l_i}$ and $\mathcal{A}(\gamma^+_r) = b_r < a_{l_i}$ for $r < l_i$ (again by (b)), we must have $\Gamma^+(u^+_j) = \{\gamma^+_{l_i}\}$.  The same reasoning shows that $\Gamma^-(u^+_j) = \{\gamma^+_{l_i}\}$. 

This demonstrates that the energy $\mathcal{E}(u^+_j)$ of the level $u^+_j$ is $0$ by (\ref{equation:energy_stokes_symp}) and the level $u^+_j$ must be a branched cover of a trivial cylinder (see \cite[Lemma 9.9]{wendl2010}). Since the ends are embedded, $u^+_j$ must be simple and thus a trivial cylinder. This is disallowed by the SFT compactness statement, so $u^+_j$ cannot exist.

The same reasoning implies that negative levels $u^-_j$ of $v$ cannot exist. Thus the building $v$ consists of only a cobordism level $u^W$. \end{proof}

Finally, we state and prove the following curve count result, Lemma \ref{lem:point_count}. For this proof, we will use SFT neck stretching as discussed in Review \ref{review:SFT_neck_stretching}.

\begin{lemma}[Curve count]
 \label{lem:point_count} Let $E(a)$ and $E(b)$ be irrational symplectic ellipsoids with parameters $a = (a_1,\dots,a_n)$ and $b = (b_1,\dots,b_n)$ satisfying
\begin{equation} \label{equation:point_count_hypotheses} 
a_i < b_i < a_{i+1} \text{ for all } i\in\{1,\dots,n-1\} \quad \text{and} \quad b_n < 2a_1.
\end{equation}
Let $\varphi:E(a) \to E(b)$ be a symplectic embedding which is isotopic to the inclusion $\iota:E(a) \to E(b)$. Finally, let $W_\varphi$ be the symplectic cobordism associated to $\varphi$ and let $J \in \mathcal{J}^{\op{reg}}(W_\varphi)$ be a regular almost complex structure (provided by Lemma \ref{lem:transversality}).  
Then the number of points $\# \mathcal{M}_I(J)$ in the moduli space $\mathcal{M}_I(J)$ is odd.
\end{lemma}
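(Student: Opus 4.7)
The plan is to show that $\#\mathcal{M}_I(J) \bmod 2$ is invariant under isotopy of $\varphi$ through symplectic embeddings, and then to compute it explicitly for the inclusion $\iota$ by identifying $\mathcal{M}_{\{i\}}(J_\iota)$ with the unique "radial cylinder" obtained from the $i$-th coordinate plane of $\C^n$.

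\textbf{Step 1 (Isotopy reduction).} Let $\{\varphi_t\}_{t \in [0,1]}$ be an isotopy of symplectic embeddings from $\varphi_0 = \iota$ to $\varphi_1 = \varphi$, and let $\mathfrak{W}$ be the associated $[0,1]$-parametrized family of exact symplectic cobordisms. Since each fiber $W_p$ has $H_1(W_p) = H_2(W_p) = 0$, Lemma \ref{lem:transversality}(b) gives a regular family $\mathfrak{J} \in \mathcal{J}^{\op{reg}}(\mathfrak{W})$ extending chosen regular endpoint structures $J_0, J_1$, and Lemma \ref{lem:compactness}(b) guarantees that $\mathcal{M}_I(\mathfrak{J})$ is a compact $1$-manifold with boundary $\mathcal{M}_I(J_0) \sqcup \mathcal{M}_I(J_1)$. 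Hence $\#\mathcal{M}_I(J_0) \equiv \#\mathcal{M}_I(J_1) \pmod 2$. Moreover, since $\Sigma_I$ is a disjoint union whose components have $i$-dependent asymptotics, there is a product decomposition $\mathcal{M}_I(J) = \prod_{i \in I} \mathcal{M}_{\{i\}}(J)$, so it suffices to prove $\#\mathcal{M}_{\{i\}}(J_\iota) \equiv 1 \pmod 2$ for each $i \in I$ and some regular $J_\iota$ on $W_\iota$.

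\textbf{Step 2 (Explicit model and uniqueness).} Pick an admissible $J_\iota$ on $\widehat{W}_\iota$ that near the cylindrical ends is the $\R$-invariant structure compatible with the standard complex structure on $\C^n$, and that on the interior preserves each coordinate divisor $D_j := \{z_j = 0\} \cap \widehat{W}_\iota$, so that every coordinate plane $\C_i = \bigcap_{j \neq i} D_j$ is $J_\iota$-holomorphic. The "radial cylinder" $C_i := \widehat{W}_\iota \cap \C_i$ is then a $J_\iota$-holomorphic cylinder from $\gamma^-_i$ to $\gamma^+_i$ and defines an element of $\mathcal{M}_{\{i\}}(J_\iota)$. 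For uniqueness, let $u \in \mathcal{M}_{\{i\}}(J_\iota)$. For each $j \neq i$, $D_j$ is a $J_\iota$-holomorphic codimension-$2$ subvariety whose contact boundaries contain the asymptotic limits $\gamma^\pm_i$ of $u$. Applying Siefring's relative asymptotic intersection theory together with positivity of intersection for punctured holomorphic curves, the sum of local intersections $u \cdot D_j$ is nonnegative while the asymptotically corrected homological intersection vanishes (since $u$ and $D_j$ share the asymptotic orbits $\gamma^\pm_i$ with matching winding data), forcing $u \subset D_j$. Intersecting over $j \neq i$ gives $u \subset \C_i$, so $u$ agrees with $C_i$ modulo biholomorphism.

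\textbf{Step 3 (Regularity and main obstacle).} Regularity of $C_i$ follows by decomposing the linearized $\dbar$-operator along the normal splitting $N_{C_i/\widehat{W}_\iota} = \bigoplus_{j \neq i} N_j$, where each $N_j$ is a trivialized complex line bundle in the $z_j$-direction; the induced Cauchy--Riemann operator on each $N_j$ has index $0$ and is an isomorphism, which one verifies either by direct computation of the asymptotic winding numbers with respect to the coordinate trivialization, or by invoking Wendl's automatic transversality criterion for embedded index-$0$ cylinders with simply covered asymptotic orbits. The main technical obstacle is the uniqueness step of Step 2: one must carefully track the asymptotic contributions to $u \cdot D_j$ at the shared asymptotic orbits $\gamma^\pm_i$ using the Siefring winding formalism, and verify that the only configuration consistent with vanishing adjusted intersection is $u \subset D_j$. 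Should this direct approach prove delicate, an alternative is to exploit the fact that $J_\iota$ is honestly integrable near the cylindrical ends: the holomorphic function $z_j \circ u$ then converges to $0$ at both punctures, so by the removable-singularity theorem it extends to the compactified domain with removable zeros at the punctures, and either the maximum principle or a degree argument forces $z_j \circ u \equiv 0$.
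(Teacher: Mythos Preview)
Your isotopy reduction in Step~1 matches the paper's ``general case'' argument. The divergence is in the computation for $\varphi = \iota$. The paper does \emph{not} compute $\mathcal{M}_I(J_\iota)$ directly for any specially chosen $J_\iota$. Instead it introduces a second inclusion $\jmath:E(\epsilon b)\hookrightarrow E(a)$ so that the composed cobordism $W_{\iota\circ\jmath}=E(b)\setminus E(\epsilon b)$ has completion biholomorphic to the symplectization of $\partial E(b)$. For a translation--invariant regular $J_{\iota\circ\jmath}$ the only index--$0$ cylinders are trivial, so $\#\mathcal{M}_I(J_{\iota\circ\jmath})=1$. A neck--stretching/gluing argument then gives $\#\mathcal{M}_I(J_{\iota\circ\jmath})\equiv \#\mathcal{M}_I(J_\iota)\cdot\#\mathcal{M}_I(J_\jmath)\pmod 2$, forcing both factors to be odd. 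This sidesteps any choice of special $J$ on $W_\iota$ and any intersection--theoretic uniqueness argument.

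Your direct approach is in principle viable, but as written it leans on tools that are specific to dimension~$4$. Siefring's asymptotic intersection theory concerns pairs of punctured curves in a symplectic $4$--manifold; here you need the intersection of a curve with a codimension--$2$ $J$--complex hypersurface in a $2n$--manifold, and while local positivity survives, the asymptotic bookkeeping you invoke is not Siefring's framework and would have to be set up separately. Likewise, Wendl's automatic transversality criterion is a $4$--dimensional statement and does not apply to your normal summands $N_j$ when $n>2$; you would really need to carry out the direct index/kernel computation you allude to. Finally, your fallback ``holomorphic function'' argument requires $z_j:\widehat{W}_\iota\to\C$ to be $(J_\iota,i)$--holomorphic on the entire completion, but the $\R$--invariant almost complex structure on a cylindrical end of an ellipsoid (determined by $J(\partial_s)=R_\alpha$) is not the restriction of the standard $J_0$ on $\C^n$, so the coordinate projections are not $J_\iota$--holomorphic there and the removable--singularity/maximum--principle step does not go through. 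The paper's neck--stretching trick buys exactly the avoidance of all three of these issues.
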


\begin{remark}[Floer theoretic proof] Morally, one may view the holomorphic cylinders in $\mathcal{M}_I(\mathfrak{J})$ as contributing to the cobordism map $CH(W_\varphi):CH(\partial E(b)) \to CH(\partial E(a))$ (where one can take $CH$ to be either the full contact homology or perhaps cylindrical contact homology). The invariance of the signed or mod $2$ point count of $\mathcal{M}_I(J)$ may be viewed essentially as a consequence of the deformation invariance of this cobordism map. A proof of Lemma \ref{lem:point_count} in this spirit is possible using the foundations from e.g. \cite{pardon2015b}. Here we provide a simpler argument which does not use Floer theory directly.
\end{remark}

\begin{proof} We first address the case where $\varphi = \iota$ and then tackle the general case.

(Case of $\varphi = \iota$) Pick an $\epsilon > 0$ sufficiently small so that we have an inclusion $\jmath:E(\epsilon \cdot b) \to E(a)$, and let $W_\iota,W_\jmath$ and $W_{\iota \circ \jmath}$ be the cobordisms associated to $\iota,\jmath$ and $\iota \circ \jmath$ respectively. Pick regular $J_\iota \in \mathcal{J}^{\op{reg}}(W_\iota)$ and $J_\jmath \in \mathcal{J}^{\op{reg}}(W_\jmath)$ so that (by  Lemmas \ref{lem:transversality}(b) and \ref{lem:compactness}(a)), the spaces $\mathcal{M}_I(J_\iota)$ and $\mathcal{M}_I(J_\jmath)$ are compact $0$--dimensional manifolds with Fredholm regular points. 

Note that $\widehat{W}_{\iota \circ \jmath}$ is equivalent (as a completed cobordism) to the symplectization of $\partial E(b)$. Thus we can choose  $J_{\iota \circ \jmath} \in \mathcal{J}^{\op{reg}}(W_{\iota \circ \jmath})$  to be translation invariant and such that the moduli spaces $\mathcal{M}_I(J_{\iota \circ \jmath})$ are all transverse (due to the same arguments as in Lemma \ref{lem:compactness}, but using the transversality theorem \cite[Theorem 8.1]{wendl2010} for symplectizations). Any $J_{\iota \circ \jmath}$--holomorphic cylinder $u$ from $\gamma^+_i$ to $\gamma^+_i$ with index $0$  must be translation invariant (since otherwise the dimension of the moduli space would be positive) and embedded, thus a trivial cylinder. Thus $\mathcal{M}_I(J_{\iota \circ \jmath})$ consists of a single point $\{u\}$, which is a product of trivial cylinders. 

Now note that we may write $W_{\iota \circ \jmath} = E(b) \setminus E(\epsilon \cdot b)$ as a composition of cobordisms $W_{\iota \circ \jmath} = W_\iota \# W_\jmath$ and consider the neck stretching domain $W_{\iota \circ \jmath}^R = W_\iota \#_R W_\jmath$ for each $R$. This yields a $[0,\infty)$--parametrized family of cobordisms $\mathfrak{W}$ whose fiber at $R \in [0,\infty)$ is $W_{\iota \circ \jmath}^R$. Let $\mathfrak{J} \in \mathcal{J}(\mathfrak{W})$ be an almost complex structure which agrees with the glued structure $J_R := J_\iota \#_R J_\jmath$ for $R$ near $\infty$, and consider the moduli space 
\begin{equation} \label{equation:neck_stretch_moduli_space} \mathcal{M}_I(\mathfrak{J}) := \{(R,u)\; |\;R \in [0,\infty) \text{ and }u \in M_I(J_R)\}.\end{equation} 
For sufficiently large $R_0$, there exists a gluing map of the form
\[
\op{glue}:\mathcal{M}_I(J_\iota) \times \mathcal{M}_I(J_\jmath) \times (R_0,\infty) \to \mathcal{M}_I(\mathfrak{J}), \qquad (u_\iota,u_\jmath,R) \mapsto (R,\op{glue}_R(u_\iota,u_\jmath)).
\]
This map is constructed in (for instance) \cite[\S 5]{pardon2015b}, and is a homeomorphism onto its image for $R_0$ sufficiently large. Using Lemma \ref{lem:transversality} and the references therein, we may choose $\mathfrak{J}$ to be parametrically regular over the region $[0,R_0]$ of $P$ and to agree with $J$ and $J_\iota \#_R J_\jmath$ at $0$ and $R \in [R_0,\infty)$. We may form a compactified moduli space
\begin{equation} \label{equation:neck_stretch_moduli_space_compactified}
\overline{\mathcal{M}}_I(\mathfrak{J}) := \mathcal{M}_I(\mathfrak{J}) \sqcup_{\op{glue}} \mathcal{M}_I(J_\iota) \times \mathcal{M}_I(J_\jmath) \times (R_0,\infty].
\end{equation}
This moduli space is a compact $1$--manifold with boundary
\begin{equation}
\partial \overline{\mathcal{M}}_I(\mathfrak{J}) = \mathcal{M}_I(J_{\iota\circ\jmath}) \sqcup \mathcal{M}_I(J_\iota) \times \mathcal{M}_I(J_\jmath).
\end{equation}
The number of boundary points of a compact $1$--manifold is even. In particular
\[
\#\mathcal{M}_I(J_{\iota\circ\jmath}) + \#\mathcal{M}_I(J_\iota) \cdot \# \mathcal{M}_I(J_\jmath) = 0 \mod 2.
\]
Since $\mathcal{M}_I(J_{\iota\circ\jmath})$ has an odd number of points, it follows that $\mathcal{M}_I(J_\iota)$ and $\mathcal{M}_I(J_\jmath)$ do to.

(General Case) Let $J$ be as chosen in the lemma statement, and let $\Phi:E(a) \times [0,1] \to E(b)$ be an isotopy of symplectic embeddings with $\Phi_0 = \iota$ and $\Phi_1 = \varphi$. Let $\mathfrak{W}$ denote the $[0,1]$--parametrized family of exact symplectic cobordisms from $\partial E(b)$ to $\partial E(a)$ with fiber $W_{\Phi_t}$ at $t \in [0,1]$. Pick any $J_\iota \in \mathcal{J}^{\op{reg}}(W_\iota)$ as permitted by Lemma \ref{lem:transversality}(b). Then by Lemma \ref{lem:transversality}(b) and \ref{lem:compactness}(a), we may choose a regular $\mathfrak{J} = \{J_t\}_{t\in[0,1]} \in \mathcal{J}^{\op{reg}}(\mathfrak{W})$ such the parametric moduli space $M_I(\mathfrak{J})$ over $[0,1]$ is Fredholm regular, and such that $J_0 = J_\iota$ and $J_1 = J$. It follows that
\[
\partial \mathcal{M}_I(\mathfrak{J}) = \mathcal{M}_I(J) \sqcup \mathcal{M}_I(J_\iota) \quad\text{and}\quad \# \mathcal{M}_I(J) = \# \mathcal{M}_I(J_\iota) \mod 2.
\]
Thus the general case follows from the case where $\varphi = \iota$.
\end{proof}

\begin{lemma} The compactification $\overline{\mathcal{M}}_I(\mathfrak{J})$ (see (\ref{equation:neck_stretch_moduli_space_compactified})) of the moduli space $\mathcal{M}_I(\mathfrak{J})$ (see (\ref{equation:neck_stretch_moduli_space})) is compact in the neck stretching topology discussed in Review \ref{review:SFT_neck_stretching}.
\end{lemma}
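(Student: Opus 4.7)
The plan is to verify sequential compactness directly. Let $\{(R_i, u_i)\} \subset \overline{\mathcal{M}}_I(\mathfrak{J})$ be an arbitrary sequence; after passing to a subsequence I may assume $R_i \to R_* \in [0, \infty]$, and I consider the cases $R_* < \infty$ and $R_* = \infty$ separately.

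In the case $R_* < \infty$, it suffices to show that the restriction of $\overline{\mathcal{M}}_I(\mathfrak{J})$ to parameter values $R \in [0, R_* + 1]$ is compact. Over the subinterval $[0, R_0]$, where $\mathfrak{J}$ is parametrically regular by construction, this follows from a direct application of Lemma \ref{lem:compactness}(a): the family of cobordisms $\{W^R_{\iota \circ \jmath}\}$ is $1$--dimensional, each fiber is homotopy equivalent to $S^{2n-1}$ so $H_1 = H_2 = 0$, and the standing hypotheses $a_i < b_i$ and $b_n < 2 a_1$ of Lemma \ref{lem:point_count} yield $b_n < 2 a_1 < 2 b_1$ and $\epsilon b_n < 2 \epsilon b_1$, i.e., the action hypotheses of Lemma \ref{lem:compactness}(a) for the boundaries $\partial E(b)$ and $\partial E(\epsilon \cdot b)$. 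Over the subinterval $[R_0, R_* + 1]$ the moduli space is identified, via the gluing homeomorphism supplied by the gluing construction of \cite[\S 5]{pardon2015b}, with a finite subinterval of $\mathcal{M}_I(J_\iota) \times \mathcal{M}_I(J_\jmath) \times [R_0, \infty)$, which is compact since $\mathcal{M}_I(J_\iota)$ and $\mathcal{M}_I(J_\jmath)$ are compact finite sets by the $\varphi = \iota$ case already established. Hence $(R_i, u_i)$ has a convergent subsequence with limit in $\mathcal{M}_I(\mathfrak{J})$.

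In the case $R_i \to \infty$, I apply the SFT neck stretching theorem (Review \ref{review:SFT_neck_stretching}) along the interface $\partial E(a)$ to extract a subsequence converging, in the neck stretching topology, to a limit building of the form
\[
v = \bigl( u^+_1, \ldots, u^+_M,\; u^U,\; u^a_1, \ldots, u^a_B,\; u^V,\; u^-_1, \ldots, u^-_C \bigr),
\]
where $u^U$ is $J_\iota$--holomorphic in $\widehat{W}_\iota$, $u^V$ is $J_\jmath$--holomorphic in $\widehat{W}_\jmath$, and the levels $u^+_j, u^a_j, u^-_j$ live in the symplectizations of $\partial E(b)$, $\partial E(a)$, $\partial E(\epsilon \cdot b)$ respectively, with overall asymptotes $\gamma^+_i$ and $\gamma^-_i$ for $i \in I$. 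I then run the action/genus/index argument of the proof of Lemma \ref{lem:compactness}(b) separately on $u^U$ (viewed as a curve in $W_\iota$ from $\partial E(b)$ to $\partial E(a)$) and on $u^V$ (in $W_\jmath$ from $\partial E(a)$ to $\partial E(\epsilon \cdot b)$), invoking the standing hypotheses $a_i < b_i < a_{i+1}$ and $b_n < 2 a_1$. This forces $u^U \in \mathcal{M}_I(J_\iota)$ and $u^V \in \mathcal{M}_I(J_\jmath)$, and forces each remaining symplectization level to be a cylinder between two copies of the same simple Reeb orbit of Fredholm index zero; by symplectization regularity \cite[Theorem 8.1]{wendl2010} such a cylinder must be trivial, which is disallowed for a non-top/bottom level of a building. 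Hence $M = B = C = 0$ and $v = (u^U, u^V)$, and the gluing map in (\ref{equation:neck_stretch_moduli_space_compactified}) identifies $v$ as the limit of $(R_i, u_i)$ in the $R = \infty$ boundary stratum of $\overline{\mathcal{M}}_I(\mathfrak{J})$.

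The main technical obstacle is the second case, specifically ruling out the intermediate symplectization levels $u^a_j$ over $\partial E(a)$: an end of $u^U$ on $\partial E(a)$ is a priori constrained only by the top-end action $b_l$, and the inequality $b_n < 2 a_1$ is precisely what forbids a doubly covered short orbit $(\gamma^a_1)^2$ from appearing as an intermediate asymptote, thereby reducing the analysis to the cylindrical, index-zero case handled by the Lemma \ref{lem:compactness}(b) argument applied at all three necks simultaneously.
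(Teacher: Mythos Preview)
Your overall structure matches the paper's: split into $R_i$ bounded versus $R_i \to \infty$, and in the latter case analyze the neck-stretching limit building level by level. However, there is a genuine gap in your treatment of the $R_i \to \infty$ case.

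You invoke the proof of Lemma~\ref{lem:compactness}(b), citing the hypotheses $a_i < b_i < a_{i+1}$ and $b_n < 2a_1$, to force $u^V \in \mathcal{M}_I(J_\jmath)$ and to control the $\partial E(\epsilon b)$ symplectization levels. But the action argument in part (b) requires the interleaving hypothesis \emph{for the specific pair of ellipsoids bounding the cobordism}. For $W_\jmath$, the outer ellipsoid is $E(a)$ and the inner is $E(\epsilon b)$; the relevant interleaving would be $\epsilon b_i < a_i < \epsilon b_{i+1}$ and $a_n < 2\epsilon b_1$, which fails for small $\epsilon$. Concretely: the only global action bound available below the $u^\jmath$ level is $\mathcal{A} \le b_l$, and since $2\epsilon b_1 \ll b_l$, this does not exclude multiply covered orbits or multi-orbit collections on $\partial E(\epsilon b)$. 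So the action argument alone cannot show that the negative end of $u^\jmath$ is a single simple orbit, nor that the $u^{\epsilon b}_j$ levels are cylinders.

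The paper closes this gap by running the argument of Lemma~\ref{lem:compactness}(a) instead, which is index-based rather than purely action-based. The point is that $J_\iota$ and $J_\jmath$ were chosen in $\mathcal{J}^{\op{reg}}$, so the cobordism levels $u^\iota$ and $u^\jmath$ are somewhere injective (their positive ends are simple by the action bound on $\partial E(b)$ and $\partial E(a)$, which \emph{does} work since $b_n < 2b_1$ and $b_n < 2a_1$) and hence Fredholm regular with nonnegative index. The index formula then forces each cobordism level to be a cylinder between simple orbits, and the chain of inequalities $\CZ(\gamma^b_l) \ge \dots \ge \CZ(\gamma^{\epsilon b}_l)$ together with $\CZ(\gamma^b_l) = \CZ(\gamma^{\epsilon b}_l)$ collapses all indices to zero. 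Your final paragraph gestures at index, but the step ``this forces $u^V \in \mathcal{M}_I(J_\jmath)$'' is exactly where the index input from part (a) is required and where your citation of part (b) does not suffice.

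A smaller remark on your $R_* < \infty$ case: appealing to surjectivity of the gluing map over $[R_0, R_* + 1]$ is not established by the paper (only that gluing is a homeomorphism onto its image), and arguing it typically requires the very compactness analysis you are performing. The paper handles this case more directly by applying Lemma~\ref{lem:compactness} over the compact $1$--dimensional parameter interval $[0,C]$.
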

 
\begin{proof} It suffices to take a sequence $(R_i,u^i) \in \mathcal{M}_I(\mathfrak{J})$ and show that it has a convergent subsequence in $\overline{\mathcal{M}}_I(\mathfrak{J})$. If $R_i \le C$ for some fixed upper bound $C$, then $(R_i,u^i)$ is a sequence of holomorphic curves in cobordisms parametrized over a compact, $1$--dimensional space $[0,C]$ and we can apply Lemma \ref{lem:compactness}. Thus we may assume that $R_i \to \infty$ as $i \to \infty$. Let $v$ denote the limit building provided by Review \ref{review:SFT_neck_stretching}.

Let $\gamma^b_i, \gamma^a_i$, and $\gamma^{\epsilon b}_i$ denote the simple orbits on $\partial E(b), \partial E(a)$, and $\partial E(\epsilon b)$ respectively, ordered by increasing action as usual. We denote the levels of $v$ by
\[
v = (u^b_1,\dots,u^b_A,u^\iota,u^a_1,\dots,u^a_B,u^\jmath,u^{\epsilon b}_1,\dots,u^{\epsilon b}_C).
\]
By considering components, we may assume that $|I| = 1$ and $u_i$ is asymptotic to $\gamma^b_l$ at the positive end and $\gamma^{\epsilon b}_l$ on the negative end for some $l$. An identical argument to that in Lemma \ref{lem:compactness}(a) shows that every level of $v$ is transverse, embedded, genus $0$ and asymptotic to a single embedded orbit at the positive end and a single embedded orbit at the negative end. 

The remainder of the proof is also like Lemma \ref{lem:compactness}(a). Since the index of each level must be nonnegative by transversality, the index of the limit orbits must be nondecreasing. Then $\op{CZ}(\gamma^b_i) = \op{CZ}(\gamma^{\epsilon b}_i)$ implies that every orbit has the same index, and that all of the levels are index $0$. In particular, any symplectization level must be a trivial cylinder, and thus can't exist. This implies the result.
\end{proof}

\subsection{Proofs of Theorem \ref{theorem:maintheorem} and Proposition \ref{prop:smallellipsoid}}

In this section, we use the moduli spaces constructed in \S \ref{subsection:moduli_spaces} to prove our main result, Theorem \ref{theorem:maintheorem}. We also provide a proof of Proposition \ref{prop:smallellipsoid}. The following small piece of notation will be helpful for both proofs. 

\begin{notation} For any $n \in \Z^+$ and any $I \subset \{1, \dots, n\}$, define the $|I|$--torus by
\begin{equation}
T_I = \left \lbrace  (\theta_1, \dots, \theta_n) \in T^n \; \middle| \; \theta_j = 0, \;\forall j\notin I \right \rbrace.
\end{equation}
Note that the $k$th homology group $H_k(T^n;\Z/2)$ of the $n$--torus $T^n$ is generated by the fundamental classes $[T_I]$, where $I$ runs over all subsets of size $|I| = k$. 
\end{notation}

For Theorem \ref{theorem:maintheorem}, we also require the following result, which is proven in \S \ref{section:topology}. 

\begin{lemma} \label{lemma:capping_lemma} Let $U$ and $V$ be compact symplectic manifolds with boundary. Let $Z$ be a closed manifold with total Stieffel--Whitney class $w(Z) = 1 \in H^*(Z;\Z/2)$ and let $\Phi$ be a smooth family of symplectic embeddings
\[\Phi:Z \to \op{SympEmb}(U,V) \quad \text{with}\quad \Phi_*[Z] = 0.\]
Then there exists a compact manifold $P$ with boundary $Z$ and an extension of $\Phi$ to a smooth family $\Psi$ of symplectic embeddings
\[\Psi:P \to \op{SympEmb}(U,V) \quad \text{with}\quad \Psi|_{\partial P} = \Phi.\]
\end{lemma}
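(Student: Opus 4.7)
The plan is to recast the problem in unoriented bordism: I will show that the pair $(Z,\Phi)$ represents the zero class in $MO_*\bigl(\op{SympEmb}(U,V)\bigr)$, and then geometrically extract a smooth null-bordism from this vanishing.

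The algebraic input is the classical structure theorem, a consequence of Thom's theorem that $MO$ splits as a wedge of mod $2$ Eilenberg--MacLane spectra: for any topological space $X$ there is a natural isomorphism
\[ MO_*(X) \;\cong\; H_*(X;\Z/2)\otimes_{\Z/2} MO_*. \]
Equivalently, a bordism class $[M,f]\in MO_n(X)$ is completely determined by its twisted Stiefel--Whitney numbers $\langle w_I(M)\cdot f^*(\alpha),[M]_2\rangle\in\Z/2$, as $I$ ranges over monomials in the Stiefel--Whitney classes of $M$ and $\alpha$ ranges over $H^*(X;\Z/2)$. Under the hypothesis $w(Z)=1$, every $w_I(Z)$ vanishes for nonempty $I$, so the only potentially nonzero such numbers are of the form $\langle\alpha,\Phi_*[Z]\rangle$, and these vanish because $\Phi_*[Z]=0$. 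Hence $[Z,\Phi]=0$ in $MO_*\bigl(\op{SympEmb}(U,V)\bigr)$.

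By the definition of unoriented bordism, this vanishing yields a compact smooth manifold $P$ with $\partial P = Z$ together with a continuous map $\Psi_0:P\to \op{SympEmb}(U,V)$ extending $\Phi$. To upgrade $\Psi_0$ to a smooth family, I would apply the exponential law to regard $\Psi_0$ as a continuous map $P\times U\to V$ that is already smooth on a collar of $\partial P\times U$, and then use relative smooth approximation (keeping values fixed in that collar) to obtain a smooth map $\Psi:P\times U\to V$ arbitrarily $C^\infty$-close to $\Psi_0$. Since $U$ is compact, $\op{SympEmb}(U,V)$ is an open subset of $C^\infty(U,V)$ in the $C^\infty$ topology, so a sufficiently fine approximation automatically factors through $\op{SympEmb}(U,V)$, giving the desired smooth extension.

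The main technical point will be arranging the relative smooth approximation so that simultaneously (i) $\Psi$ equals $\Phi$ on all of $\partial P$, and (ii) the $C^\infty$-perturbation remains uniformly small over all of $P\times U$ so that each slice remains a symplectic embedding. Both requirements are standard for infinite-dimensional manifolds of smooth maps --- the first by interpolating through a collar of $\partial P$ via a cutoff function, the second by the openness of $\op{SympEmb}(U,V)\subset C^\infty(U,V)$ --- and they constitute the only nontrivial geometric input beyond the algebraic topology of $MO_*$.
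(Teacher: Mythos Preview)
Your bordism-theoretic reduction is essentially the same as the paper's, and in one respect slightly cleaner: by invoking the splitting of $MO$ as a wedge of Eilenberg--MacLane spectra you get the Stiefel--Whitney number criterion for an arbitrary space $X$, whereas the paper quotes the Conner--Floyd version (stated for finite complexes) and therefore has to insert an extra step reducing to a finite sub-CW-complex (using stratifold homology to find a compact witness to $f_*[Z]=0$).  So the first two paragraphs are fine.

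The gap is in your smoothing step.  The claim that $\op{SympEmb}(U,V)$ is an \emph{open} subset of $C^\infty(U,V)$ is false: being an embedding is open (for $U$ compact), but $\varphi^*\omega_V=\omega_U$ is a \emph{closed} condition.  A generic $C^\infty$-small perturbation of a symplectic embedding is not symplectic, so approximating the adjoint map $P\times U\to V$ by a nearby smooth map will destroy the symplectic condition on each slice, and no amount of closeness will rescue this.

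The paper handles this by first proving that $\op{SympEmb}(U,V)$ carries the structure of a metrizable Fr\'echet manifold, locally modeled on the Fr\'echet space of closed $1$--forms on $U$ (via a Weinstein neighborhood theorem with boundary for the graph of $\varphi$ in $U\times V$).  Smooth approximation is then performed \emph{within} these linear charts, so that nearby points are automatically symplectic embeddings; this is what allows the identification of continuous and smooth bordism groups of $\op{SympEmb}(U,V)$.  If you want to salvage your approach without charts, you would need an additional argument---e.g.\ first approximate to a smooth family of (merely) embeddings, then run a parametrized Moser trick over $P$ to make each slice symplectic while keeping the boundary fixed---but that is exactly the content that the Fr\'echet manifold structure packages for you.
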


Given the above preparation, we are now ready for the proof of Theorem \ref{theorem:maintheorem}.

\begin{proof}(Theorem \ref{theorem:maintheorem})
We pursue the argument by contradiction outlined at the begining of \S \ref{section:proof}. Fix an integer $k$ with $1 \le k \le n$ and suppose that there were a nonzero $\Z/2$--homology class of the $n$--torus of the form
\begin{equation}\label{equation:torus_homology_class} [A] = \sum_L c_L [T_L] \quad\text{with}\quad\Phi_* [A]=0 \in H_k(\op{SympEmb}(E(a),E(b));\Z/2).\end{equation}

Let $Z = \sqcup_{c_L\neq 0} T_L$. Then Lemma \ref{lemma:capping_lemma} states that there exists a smooth $(k+1)$--dimensional manifold $P$ with boundary $\partial P = Z$ and a smooth family of embeddings
 \begin{equation} \label{equation:extended_family} \Psi: P\to \op{SympEmb}(E(a),E(b)) \quad\text{with}\quad \Psi|_{T_L} = \Phi|_{T_L} \text{ for each }T_L \subset Z. \end{equation} 

 By passing to subellipsoids, we may assume that $E(a)$ and $E(b)$ are irrational. In this setting, Lemmas \ref{lem:transversality} and \ref{lem:compactness} state that there exist choices of $J_{\partial P} \in \mathcal{J}(\widehat{W}_{\partial P})$ and $\mathfrak{J} \in \mathcal{J}(\Psi)$ such that the parametrized moduli space $\mathcal{M}_{I}({\mathfrak{J}})$ is a compact $(k+1)$--dimensional manifold with boundary $ \partial \mathcal{M}(\mathfrak{J}) \simeq Z \times \mathcal{M}(\widehat{W}_{\partial P}; J_{\partial P})$. On the parametrized moduli space $\mathcal{M}_I(\mathfrak{J})$, we can define an evaluation map 

\begin{equation} \label{equation:evaluation_map_definition} \op{Ev}_{I}: \prod_{i \in I} \gamma^+_i \times \mathcal{M}_{I}(\mathfrak{J}) \to \prod_{i \in I} \gamma^-_i  \simeq T^k \end{equation}
via the following procedure. Let $q = (q_i)_{i \in I}$ be a point in $\times_{i \in I} \gamma^+_i$ and let $(p,u) \in \mathcal{M}_{I}(\mathfrak{J})$. According to (\ref{equation:parametric_moduli_space}), $(p,u)$ is a pair of a point $p \in P$ and an equivalence class of holomorphic maps $u:\Sigma_I \to \widehat{W}_{\Psi_p}$ up to reparametrization. Pick a representative holomorphic curve $\widetilde{u}$ of $u$, which consists of $k$ maps $\widetilde{u}_i:\Sigma_i \to \widehat{W}_{\Psi_p}$ for each $i \in I$. We have limit parametrizations of $\gamma^+_i$ and $\gamma^-_i$ induced by $\widetilde{u}_i$, defined by
\[
\lim_+\widetilde{u}_i:S^1 \to \gamma^+_i \subset \partial E(b), \qquad \quad \lim_+\widetilde{u}_i(t) := \lim_{s \to + \infty} \pi_{\partial_+W_{\Psi_p}} \widetilde{u}_i(s,t),
\]
\[
\lim_-\widetilde{u}_i:S^1 \to \Psi_p(\gamma^-_i) \subset \Psi_p(\partial E(a)), \qquad \lim_-\widetilde{u}_i(t) := \lim_{s \to - \infty} \pi_{\partial_-W_{\Psi_p}} \widetilde{u}_i(s,t).
\]
Here $\pi_{\partial_+W_{\Psi_p}}$ and $\pi_{\partial_-W_{\Psi_p}}$ denote projection to the positive and negative boundaries of $W_{\Psi_p}$. Note that these projections are only defined in the limit as $s \to \pm \infty$. In terms of these parametrizations, we define the evaluation map $\op{Ev}_I$ by the formula
\begin{equation} \label{equation:evaluation_map_formula}
\op{Ev}_I(q;p,u) := \left([\Psi_p^{-1} \circ \lim_- \widetilde{u}_i \circ (\lim_+ \widetilde{u}_i)^{-1}] (q_i)\right)_{i \in I} \in \prod_{i \in I} \gamma^-_i.
\end{equation}This definition is independent of the choice of representative $\widetilde{u}$. Finally, fix an arbitrary $\underline{q}$ in the product $\prod_{i \in I} \gamma^+_i$ and define 
\begin{equation}\label{equation:small_evaluation_map}\op{ev}_I:\mathcal{M}_{I}(\mathfrak{J}) \to \prod_{i \in I} \gamma^-_i, \qquad \op{ev}_I(p,u) := \op{Ev}_I(\underline{q};p,u).\end{equation}

Now consider the restriction of $\op{ev}_I$ to each component $T_L \times \{u\}$ of the boundary $Z \times \mathcal{M}(\widehat{W}_{\partial P},J_{\partial P})$ of $\mathcal{M}(\mathfrak{J})$. Since the equivalence class of curve $u$ is independent of $\theta \in T_L \subset T^n$, we can use (\ref{equation:evaluation_map_formula}) and (\ref{equation:small_evaluation_map}) to write
\[\op{ev}_I(\theta,u) = (\Psi_\theta^{-1}(\underline{r}_i))_{i \in I} \qquad \text{with}\quad\underline{r} := \left([\lim_- \widetilde{u}_i \circ (\lim_+ \widetilde{u}_i)^{-1}] (\underline{q}_i)\right)_{i \in I} \in \prod_{i \in I} \gamma^-_i.\]
Here $\underline{r}$ is independent of $\theta$. Using the fact that $\Psi^{-1}|_Z = \Phi^{-1}$ and the formula (\ref{equation:embedding_family}) for the family of embeddings $\Phi$, we have the formula
\begin{equation} \label{equation:degree_calculation}
\op{ev}_I(\theta,u) = (\Phi_\theta^{-1}(\underline{r}_i))_{i \in I} = (e^{-2\pi i\theta_i} \cdot \underline{r}_i)_{i \in I}.
\end{equation}
In the right--most expression of (\ref{equation:degree_calculation}), we identify $\underline{r}_i$ with an element of $\C^n$ via the inclusion $\gamma_i^- \subset E(a) \subset \C^n$. 

The expression (\ref{equation:degree_calculation}) allows us to compute the degree of $\op{ev}_I$ on each component $T_L \times \{u\}$. There are two cases. If $L = I$, then (\ref{equation:degree_calculation}) shows that $\op{ev}_I|_{T_L \times \{u\}}$ is degree $1$. If $I \neq L$, then for any $j \in L \setminus (L \cap I)$, $\theta_j$ is constant for every $\theta \in T_L$ and it follows from  (\ref{equation:degree_calculation}) that the degree of $\op{ev}_I|_{T_L \times \{u\}}$ is $0$. To derive our final contradictiction, we now observe that the total degree mod $2$ of $\op{ev}_I$ restricted to the boundary is
\begin{equation} 
\label{equation:final_point_count}
\begin{split}
\op{deg}(\op{ev}_I|_{\partial \mathcal{M}_I(\mathfrak{J})}) &= \sum_{T_L \times \{u\} \subset \partial \mathcal{M}_I(\mathfrak{J})} \op{deg}(\op{ev}_I|_{T_L \times \{u\}}) = \\
&=|\mathcal{M}_I(J_{\partial P})| \equiv 1 \mod 2.
\end{split}
\end{equation}
The right--most equality in (\ref{equation:final_point_count}) crucially uses the point count of Lemma \ref{lem:point_count}. The equality (\ref{equation:final_point_count}) also provides the contradiction, since the degree of the restricton of a map to a boundary must be $0$ mod $2$. This concludes the proof.
\end{proof}

Having concluded the proof of Theorem \ref{theorem:maintheorem}, we now move on to Proposition \ref{prop:smallellipsoid}. The proof is much less involved than that of Theorem \ref{theorem:maintheorem}, and does not use any of the machinery from \S \ref{subsection:review_of_contact_geometry}--\ref{subsection:moduli_spaces}. We begin with a lemma about the homology groups of the unitary group $U(n)$.

\begin{lemma} \label{lem:unitary_contractible} Consider the map $U:T^n \to U(n)$ given by $\theta \mapsto U_\theta$. Then the induced map $U_*:H_*(T^n;\Z/2) \to H_*(U(n);\Z/2)$ on $\Z/2$--homology is:
\begin{itemize}
\item[(a)] surjective if $* = 0$ or $* = 1$.
\item[(b)] identically $0$ if $* \ge 2$.
\end{itemize}
\end{lemma}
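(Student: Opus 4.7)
The plan is to dualize to cohomology and exploit that $U$ is a group homomorphism. Recall the standard computations $H^*(\U(n);\Z/2) \cong \Lambda_{\Z/2}(x_1,x_3,\ldots,x_{2n-1})$ with $|x_{2k-1}| = 2k-1$, and $H^*(T^n;\Z/2) \cong \Lambda_{\Z/2}(y_1,\ldots,y_n)$ with $|y_i|=1$. Since $T^n$ and $\U(n)$ are compact, all relevant homology groups are finite-dimensional $\Z/2$-vector spaces in each degree, so universal coefficients identifies $U_*$ in degree $k$ with the $\Z/2$-dual of $U^*$ in degree $k$. Thus (a) and (b) together are equivalent to: $U^*$ is injective on $H^0$ and $H^1$, and vanishes on $H^k$ for every $k \ge 2$.

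The core step is to use that $U: T^n \to \U(n)$ is a homomorphism of topological groups, so $U^*$ is a homomorphism of $\Z/2$-Hopf algebras with coproducts induced by group multiplication. With respect to these coproducts the odd-degree generators $x_{2k-1}$ may be chosen primitive (e.g.\ as transgressions of the Chern classes in the fibration $\U(n) \to E\U(n) \to B\U(n)$), so each $U^*(x_{2k-1})$ is primitive in $H^*(T^n;\Z/2)$. A direct computation using $\Delta(y_J) = \sum_{I \subset J} y_I \otimes y_{J\setminus I}$ (no signs, since we are in characteristic $2$) shows that the primitive subspace of $\Lambda_{\Z/2}(y_1,\ldots,y_n)$ equals the $\Z/2$-span of $y_1,\ldots,y_n$ and is concentrated in degree $1$; hence $U^*(x_{2k-1}) = 0$ for every $k \ge 2$.

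To handle $x_1$, I would use the determinant. The composition $\det \circ U : T^n \to \U(1)$ is the sum map $\theta \mapsto \theta_1 + \cdots + \theta_n$, and since $\det^*$ sends the degree-$1$ generator of $H^1(\U(1);\Z/2)$ to $x_1$, we obtain $U^*(x_1) = y_1 + \cdots + y_n \neq 0$. This (together with the trivial degree-$0$ case) proves part (a). For part (b), any class in $H^k(\U(n);\Z/2)$ with $k \ge 2$ is a $\Z/2$-linear combination of monomials $x_{j_1}\cdots x_{j_r}$ with $j_1 < \cdots < j_r$ distinct positive odd integers summing to $k$; since at most one $j_l$ can equal $1$, the condition $k \ge 2$ forces some $j_l \ge 3$, and the previous paragraph then gives $U^*(x_{j_l}) = 0$, killing the monomial.

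The only step that is not a routine invocation of standard facts is the primitive-element computation for $\Lambda_{\Z/2}(y_1,\ldots,y_n)$, but this is short once one writes out $\Delta(y_J)$ on monomials and observes that the cross terms $y_I \otimes y_{J \setminus I}$ for $\emptyset \neq I \subsetneq J$ are linearly independent.
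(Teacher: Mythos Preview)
Your proof is correct and takes a genuinely different route from the paper. The paper argues directly in homology: for part (b) it factors each generator $[T_L]$ with $|L|\ge 2$ through a block subgroup $U(2)\times U(n-2)\hookrightarrow U(n)$, reducing to the observation that $H_2(U(2);\Z/2)=0$ (since $H^*(U(2);\Z/2)$ is exterior on classes of degree $1$ and $3$). You instead dualize and use that $U$ is a group homomorphism, so $U^*$ respects the Hopf coproducts; since the exterior generators $x_{2k-1}$ of $H^*(\U(n);\Z/2)$ can be chosen primitive and the primitives of $H^*(T^n;\Z/2)$ sit in degree~$1$, every $x_{2k-1}$ with $k\ge 2$ is killed by $U^*$, which forces $U^*=0$ in degrees $\ge 2$. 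Your determinant computation for $x_1$ matches the paper's use of $\det_\C$ in part (a). The paper's argument is more hands-on and needs no Hopf-algebra input beyond the additive structure of $H^*(U(2);\Z/2)$; your argument is more structural, explains conceptually why nothing above degree $1$ can survive, and would adapt with little change to the maximal torus of other classical groups.
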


\begin{proof} To show (a), we first note that $T^n$ and $U(n)$ are connected so $U_*|_{H_0} = \op{Id}$. Furthermore, if we consider the loop $\gamma:\R/2\pi \Z \to T^n$ given by $\theta \mapsto (\theta,0,\dots,0)$, we see that the composition
\[
\op{det}_\C \circ\ U \circ \gamma:\R/2\pi \Z \to U(1) \simeq \R/2\pi \Z
\]
is the identity. Since $\op{det}_\C:U(n) \to U(1)$ induces an isomorphism on $H_1$, the induced map of $U$ must be surjective on $H_1$.

To show (b) we proceed as follows. It suffices to show that $U_*[T_L] = 0$ for all $L$ with $|L| \ge 2$. We can factorize $T_L = T_J \times T_K$ for $J \sqcup K = L$ and $|J| = 2$, and 
\[
T_L = T_J \times T_K \xrightarrow{\iota_J \times \iota_K} U(2) \times U(n-2) \xrightarrow{j} U(n).
\]
Here $j$ is the inclusion of a product of unitary subgroups, and $\iota_J$ and $\iota_K$ are inclusions of the tori into these unitary subgroups. It suffices to show that $(\iota_J \times \iota_K)_*[T_L] = [\iota_J]_*[T_J] \otimes [\iota_K]_*[T_K] = 0$, or simply that $[\iota_J]_*[T_J] = 0 \in H_2(U(2);\Z/2)$. 

Now we simply note that $\op{dim}(U(2)) = 4$ and $H^*(U(2);\Z/2) \simeq \Z/2[c_1,c_3]$ where $c_i$ is a generator of index $i$. In particular, $H_2(U(2);\Z/2) \simeq H^2(U(2);\Z/2) = 0$.\end{proof}

Using Lemma \ref{lem:unitary_contractible}, we can now prove Propositon \ref{prop:smallellipsoid}. The point is that the entire unitary group $U(n)$ embeds into $\op{SympEmb}(E(a),E(b))$ via domain restriction when $a_i < b_j$ for all $i$ and $j$ (which is equivalent to $a_n < b_1$ by our ordering convention).  

\begin{proof} (Proposition \ref{prop:smallellipsoid}) Let $D:\op{SympEmb}(E(a),E(b)) \to U(n)$ denote the map $\varphi \mapsto r(d\varphi|_0)$, given by taking derivatives $d\varphi|_0 \in \op{Sp}(2n)$ at the origin and composing with a retraction $r:\op{Sp}(2n) \to U(n)$. Under the hypotheses on $a$ and $b$, we can factor the identity $\op{Id}:U(n) \to U(n)$ and $\Phi:T^n \to \op{SympEmb}(E(a),E(b))$ as 
\[
\op{Id}:U(n) \xrightarrow{\op{res}} \op{Symp}(E(a),E(b)) \xrightarrow{D} U(n),
\]
\[
\Phi:T^n \xrightarrow{U} U(n) \xrightarrow{\op{res}} \op{SympEmb}(E(a),E(b)).
\]
Here $\op{res}(\varphi) := \varphi|_{E(a)}$ denotes restriction of domain. In particular, $\op{res}:U(n) \to \op{Symp}(E(a),E(b))$ is injective on homology and $\op{Im}(\Phi_*) \simeq \op{Im}(U_*)$ as $\Z$--graded $\Z/2$--vector spaces. The result thus follows from Lemma \ref{lem:unitary_contractible}.
\end{proof}

\section{Spaces of symplectic embeddings} 
\label{section:topology}

In this section, we discuss some basic results about the Fr\'echet manifold of symplectic embeddings $\op{SympEmb}(U,V)$ between symplectic manifolds with boundary. In \S \ref{subsection:Frechet_manifold_structure}, we construct the Fr\'echet manifold structure on $\op{SympEmb}(U,V)$. In \S \ref{subsection:bordism_groups}, we discuss the relationship between the bordism groups and homology groups of a Fr\'echet manifold. Last, we prove a version of the Weinstein neighborhood with boundary as Proposotion \ref{prop:weinstein_nbhd} in \S \ref{subsection:weinstein_neighborhood_w_bdry}.

\subsection{Fr\'echet manifold structure} \label{subsection:Frechet_manifold_structure} Let $(U,\omega_U)$ and $(V,\omega_V)$ be $2n$--dimensional compact symplectic manifolds with nonempty contact boundaries. We now give a proof of the folklore result that the space of symplectic embeddings from $U$ to $V$ is a Fr\'echet manifold. 

\begin{proposition} \label{prop:Frechet_structure_on_symp} The space $\op{SympEmb}(U,V)$ of symplectic embeddings $\varphi:U \to \op{int}(V)$ with the $C^\infty$ compact open topology is a metrizable Fr\'echet manifold.
\end{proposition}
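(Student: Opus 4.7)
The plan is to build charts on $\op{SympEmb}(U,V)$ locally using the Weinstein Lagrangian neighborhood theorem applied to graphs, after first realizing $\op{SympEmb}(U,V)$ as a subset of the standard Fr\'echet manifold $C^{\infty}(U,V)$.

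First, I would equip $C^{\infty}(U,V)$ with its usual Fr\'echet manifold structure. Enlarge $V$ to an open manifold $\widetilde V$ by attaching an outward collar along $\partial V$, and choose a complete Riemannian metric on $\widetilde V$. The fiberwise exponential at each map $\varphi \in C^{\infty}(U,\widetilde V)$ then gives a chart modeled on an open neighborhood of $0$ in the Fr\'echet space $\Gamma(\varphi^{*}T\widetilde V)$, which realizes the $C^{\infty}$ compact--open topology. Because $U$ is compact, this topology is metrizable. The subset $\op{Emb}(U,\op{int}(V)) \subset C^{\infty}(U,\widetilde V)$ is open, hence a metrizable Fr\'echet manifold.

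Next, for a fixed $\varphi \in \op{SympEmb}(U,V)$, I would construct a Weinstein chart on $\op{SympEmb}(U,V)$ near $\varphi$. Using Proposition \ref{prop:weinstein_nbhd}, extend $\varphi$ to a symplectic embedding $\widetilde\varphi:\widetilde U \to \widetilde V$ between small outward enlargements across the contact boundaries. The graph $\op{Gr}(\widetilde\varphi) \subset (\widetilde U\times\widetilde V,\,(-\omega_U)\oplus\omega_V)$ is then a Lagrangian submanifold without boundary in a neighborhood of the compact Lagrangian $\op{Gr}(\varphi)$. The classical Weinstein Lagrangian neighborhood theorem supplies a symplectomorphism
\[
\Theta : \mc{N}(\op{Gr}(\widetilde\varphi)) \xrightarrow{\sim} \mc{N}(0_{\widetilde U}) \subset T^{*}\widetilde U.
\]
For any $\psi \in \op{Emb}(U,\op{int}(V))$ sufficiently $C^{\infty}$-close to $\varphi$, the graph $\op{Gr}(\psi) \subset U\times V$ lies in $\mc{N}(\op{Gr}(\widetilde\varphi))$ and is transported by $\Theta$ to the graph of a unique $1$-form $\alpha_{\psi}\in\Omega^{1}(U)$. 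The standard Weinstein-chart computation shows $\op{Gr}(\psi)$ is Lagrangian if and only if $\alpha_{\psi}$ is closed, so $\psi \mapsto \alpha_{\psi}$ is a local homeomorphism between a $C^{\infty}$-neighborhood of $\varphi$ in $\op{SympEmb}(U,V)$ and a $C^{\infty}$-open neighborhood of $0$ in the closed Fr\'echet subspace $\Omega^{1}_{\op{closed}}(U)\subset\Omega^{1}(U)$. Smoothness of transition maps between two such charts follows because they are built from the smooth operations of pointwise exponential, the symplectomorphism $\Theta$, and pullback along smooth reparametrizations, each of which is smooth in the Fr\'echet sense, and metrizability of the resulting manifold is inherited from metrizability of $C^{\infty}(U,V)$.

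I expect the main technical obstacle to lie in the boundary behavior of Step 2: one must check that the symplectic extension $\widetilde\varphi$ furnished by Proposition \ref{prop:weinstein_nbhd}, together with the Weinstein neighborhood of $\op{Gr}(\widetilde\varphi)$, can be chosen so that the assignment $\psi \mapsto \alpha_{\psi}$ actually surjects onto a genuine $C^{\infty}$-open neighborhood of $0$ in $\Omega^{1}_{\op{closed}}(U)$. Equivalently, every symplectic embedding that is $C^{\infty}$-close to $\varphi$ must be realized as the graph of a closed $1$-form in the chart, which requires the Weinstein neighborhood of $\op{Gr}(\widetilde\varphi)$ to remain uniformly large as $\psi$ ranges over a $C^{\infty}$-neighborhood of $\varphi$, with no degeneration of size near $\partial U$.
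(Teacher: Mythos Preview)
Your overall strategy---identify nearby symplectic embeddings with closed $1$--forms via the Weinstein neighborhood of the graph---is exactly the paper's. But your execution diverges at a crucial point and leaves a real gap.

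The paper applies Proposition~\ref{prop:weinstein_nbhd} \emph{directly} to the Lagrangian-with-boundary $\Gamma(\varphi)\subset U\times\op{int}(V)$, after checking that $\partial\Gamma(\varphi)$ is transverse to the characteristic foliation of $\partial U\times\op{int}(V)$. The resulting symplectomorphism $\psi:A\simeq B$ therefore respects boundaries, carrying $A\cap T^*U|_{\partial U}$ to $B\cap(\partial U\times V)$. This boundary-matching is what makes the reparametrization $(\pi_U\circ\psi\circ\alpha)^{-1}$ in the chart formula a genuine diffeomorphism of $U$ (not merely an embedding into a larger manifold), so that $\Phi[\alpha]$ lands in $\op{SympEmb}(U,V)$ and, conversely, $\Psi[\phi]$ is a section of $T^*U\to U$.

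You instead invoke Proposition~\ref{prop:weinstein_nbhd} to \emph{extend} $\varphi$ to $\widetilde\varphi:\widetilde U\to\widetilde V$ and then apply the classical Weinstein theorem to $\op{Gr}(\widetilde\varphi)$. Two problems. First, Proposition~\ref{prop:weinstein_nbhd} is a Lagrangian neighborhood statement, not an extension theorem for symplectic embeddings; the extension you want comes from the Liouville collar structure, not from that proposition. Second, and more seriously, a generic Weinstein chart $\Theta$ for the open Lagrangian $\op{Gr}(\widetilde\varphi)$ has no reason to send the hypersurface $\partial U\times\widetilde V$ to $T^*\widetilde U|_{\partial U}$. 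Consequently, for $\psi:U\to\op{int}(V)$ close to $\varphi$, the image $\Theta(\op{Gr}(\psi))$ is a Lagrangian-with-boundary in $T^*\widetilde U$ whose boundary need not lie over $\partial U$; it is the graph of a closed $1$--form over some nearby domain $\pi(\Theta(\op{Gr}(\psi)))\subset\widetilde U$, but not over $U$ itself. So your assignment $\psi\mapsto\alpha_\psi\in\Omega^1(U)$ is not well-defined, let alone a bijection onto an open set---the difficulty is not merely the ``uniform size'' issue you flag at the end. The fix is precisely to arrange that $\Theta$ matches the relevant boundary hypersurfaces, and establishing that is the content of Proposition~\ref{prop:weinstein_nbhd} applied to $\Gamma(\varphi)$; once you do that, you have reproduced the paper's argument.
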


\begin{proof} Let $(U \times V, \omega_{U \times V})$, with $\omega_{U \times V} = \pi^*_U\omega_U - \pi^*_V\omega_V$, denote the product symplectic manifold with corners. Given a symplectic embedding $\varphi:U \to \op{int}(V)$, we may associate the graph $\Gamma(\varphi) \subset U \times V$ given by
\[
\Gamma(\varphi) := \{(u,\varphi(u)) \in U \times V\}.
\]
The graph is a Lagrangian submanifold with boundary transverse to the characteristic foliation $T(\partial U)^\omega$ on the contact hypersurface $\partial U \times \op{int}(V)$. By the Weinstein neighborhood theorem with boundary, Proposition \ref{prop:weinstein_nbhd}, there is a neighborhood $A$ of $U$, a neighborhood $B$ of $\Gamma(\varphi)$ and a symplectomorphism $\psi:A \simeq B$ with $\psi|_U:U \to \Gamma(\varphi)$ given by $u \mapsto (u,\varphi(u))$ and $\psi^*\omega_{U \times V} = \omega_{\op{std}}$. 

Let $\mathcal{A}(\varphi,\psi) \subset \op{ker}(d:\Omega^1(L) \to \Omega^2(L))$ and $\mathcal{B}(\varphi,\psi) \subset \op{SympEmb}(U,V)$ denote the open subsets given by
\[
\mathcal{A}(\varphi,\psi) := \left\lbrace \alpha \in \Omega^1(L)\; \middle| \; d\alpha = 0 \text{ and}\op{Im}(\alpha) \subset A\right\rbrace,
\]
\[
\mathcal{B}(\varphi,\psi) := \left\lbrace \phi \in \op{SympEmb}(U,V)\; \middle| \; \op{Im}(\varphi) \subset B\right\rbrace.
\]
Then we have maps $\Phi:\mathcal{A}(\varphi,\psi) \to \mathcal{B}(\varphi,\psi)$ and $\Psi:\mathcal{A}(\varphi,\psi) \to \mathcal{B}(\varphi,\psi)$ given by
\[
\alpha \mapsto \Phi[\alpha] := (\pi_V \circ \psi \circ \alpha) \circ (\pi_U \circ \psi \circ \alpha)^{-1},
\]
\[
\phi \mapsto \Psi[\phi] := (\psi^{-1} \circ (\op{Id} \times \phi)) \circ (\pi_L \circ \psi^{-1} \circ (\op{Id} \times \phi))^{-1}.
\]
It is a tedious but straightforward calculation to check that $\Phi \circ \Psi = \op{Id}$ and $\Psi \circ \Phi = \op{Id}$. The fact that $\Phi$ and $\Psi$ are continuous in the $C^\infty$ compact open topologies on the domain and images follows from the fact that function composition defines a continuous map $C^\infty(M,N) \times C^\infty(N,O) \to C^\infty(M,O)$ for any compact manifolds $M$, $N$, and $O$ (in fact, smooth; see \cite[Theorem 42.13]{kriegl1997}).

Since $C^\infty(U,V)$ is metrizable under the compact open $C^\infty$--topology (see \cite[Corollary 41.12]{kriegl1997}), the subspace $\op{SympEmb}(U,V)$ is also metrizable. \end{proof}

\begin{lemma} Let $L$ be a compact manifold with boundary and let $\sigma:L \to T^*L$ be a section. Then $\sigma(L)$ is Lagrangian if and only if $\sigma$ is closed.
\end{lemma}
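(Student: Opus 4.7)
The plan is to use the tautological characterization of the Liouville form on the cotangent bundle. Recall that $T^*L$ carries a canonical $1$--form $\lambda_{\op{can}}$, defined pointwise by $\lambda_{\op{can}}|_{(q,p)}(v) = p(d\pi(v))$, where $\pi:T^*L\to L$ is the bundle projection. Its key property is that for any smooth $1$--form $\sigma$ on $L$, regarded as a section $\sigma:L\to T^*L$, one has the tautological identity $\sigma^*\lambda_{\op{can}} = \sigma$. The canonical symplectic form on $T^*L$ is $\omega_{\op{can}} = -d\lambda_{\op{can}}$. I would take this setup as given, citing it rather than reproving it.

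From here the proof is essentially a one--line calculation. Since $\sigma:L\to\sigma(L)$ is a diffeomorphism onto its image, the image $\sigma(L)$ is isotropic if and only if $\sigma^*\omega_{\op{can}} = 0$. Using naturality of the exterior derivative together with the tautological identity, one computes
\[
\sigma^*\omega_{\op{can}} \;=\; \sigma^*(-d\lambda_{\op{can}}) \;=\; -d(\sigma^*\lambda_{\op{can}}) \;=\; -d\sigma.
\]
Therefore $\sigma^*\omega_{\op{can}} = 0$ if and only if $d\sigma = 0$, i.e.\ if and only if $\sigma$ is closed as a $1$--form.

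To conclude, I would observe that $\sigma(L)$ has dimension $\dim L = \tfrac{1}{2}\dim T^*L$, so the isotropic condition is equivalent to the Lagrangian condition. The hypothesis that $L$ has nonempty boundary plays no role, since the vanishing of $\sigma^*\omega_{\op{can}}$ and the closedness of $\sigma$ are both pointwise conditions that are unaffected by boundary. There is no real obstacle in this argument; the only subtlety worth flagging is the sign convention for $\omega_{\op{can}} = -d\lambda_{\op{can}}$, but since the statement only concerns the vanishing of $\sigma^*\omega_{\op{can}}$, the sign is immaterial.
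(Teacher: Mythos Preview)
Your argument is correct and is exactly the standard proof via the tautological property $\sigma^*\lambda_{\op{can}}=\sigma$; the paper's own proof simply says ``The same as the closed case, see \cite[Proposition 3.4.2]{mcduff2017},'' which is precisely this computation.
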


\begin{proof} The same as the closed case, see \cite[Proposition 3.4.2]{mcduff2017}.
\end{proof}

\subsection{Bordism groups of Fr\'echet manifolds} \label{subsection:bordism_groups} We now discuss (unoriented) bordism groups and their structure in the case of Fr\'echet maifolds. We begin by defining the relevant notions of (continuous and smooth) bordism. 

\begin{definition}[Bordisms]Let $X$ be a topological space and $f:Z \to X$ be a map from a closed manifold. We say that the pair $(Z,f)$ is \emph{null--bordant} if there exists a pair $(Y,g)$ of a compact manifold with boundary $Y$ and a continuous map $g:Y \to X$ such that $\partial Y = Z$ and $g|_{\partial Y} = f$. Given a pair of manifold/map pairs $(Z_i,f_i)$ for $i \in \{0,1\}$, we say that $(Z_0,f_0)$ and $(Z_1,f_1)$ are \emph{bordant} if $(Z_0 \sqcup Z_1,f_0 \sqcup f_1)$ is null--bordant. 
\end{definition}

\begin{definition}[Smooth bordism] Let $X$ be a Fr\'echet manifold and $f:Z \to X$ be a smooth map from a smooth closed manifold. Then $(Z,f)$ is \emph{smoothly null--bordant} if it is null--bordant via a pair $(Y,g)$ where $g:Y \to X$ be a smooth map of Banach manifolds with boundary. Similarly, a pair $(Z_i,f_i)$ for $i \in \{0,1\}$ is \emph{smoothly bordant} if $(Z_0 \sqcup Z_1, f_0 \sqcup Z_1)$ is smoothly null--bordant.
\end{definition}

The above notions come with accompanying versions of the bordism group.

\begin{definition}[Bordism group of $X$] The \emph{$n$--th bordism group} $\Omega_n(X;\Z/2)$ of a topological space $X$ is group generated by equivalence classes $[Z,f]$ of pairs $(Z,f)$, where $Z$ is a closed $n$--dimensional manifold and $f:Z \to X$ is a continuous map, modulo the relation that $(Z_0,f_0) \sim (Z_1,f_1)$ if the pair is bordant. Addition is defined by disjoint union
\[[Z_0,f_0] + [Z_1,f_1] := [Z_0 \sqcup Z_1,f_0 \sqcup f_1].\]
\end{definition}

\begin{definition}[Smooth bordism group of $X$] The \emph{$n$--th smooth bordism group} $\Omega_n^\infty(X;\Z/2)$ of a Fr\'echet manifold $X$ is group generated by equivalence classes $[Z,f]$ of pairs $(Z,f)$, where $Z$ is a closed $n$--dimensional manifold and $f:Z \to X$ is a smooth map, modulo the relation that $(Z_0,f_0) \sim (Z_1,f_1)$ if the pair is smoothly bordant. Addition in the group $\Omega_*^\infty(X;\Z/2)$ is defined by disjoint union as before.
\end{definition}

\begin{lemma} \label{lem:smooth_bordism_=_cts_bordism} The natural map $\Omega^\infty_*(X;\mathbb{Z}_2) \to \Omega_*(X;\mathbb{Z}_2)$ is an isomorphism.
\end{lemma}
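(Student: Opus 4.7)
The plan is to show both surjectivity and injectivity of the forgetful map $\Omega_*^\infty(X;\Z/2) \to \Omega_*(X;\Z/2)$ by a relative smooth approximation argument, applied to maps and bordisms into the Fr\'echet manifold $X$.

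The key technical input will be a \emph{relative smooth approximation lemma}: given a compact smooth manifold $Y$ (possibly with boundary), a closed submanifold $A \subset Y$, and a continuous map $g:Y \to X$ which is smooth on a neighborhood of $A$, there exists a smooth map $g':Y \to X$ with $g'|_A = g|_A$, together with a continuous homotopy $G:Y \times [0,1] \to X$ from $g$ to $g'$ which is stationary on $A$. The Fr\'echet manifolds of interest, namely those of the form $\op{SympEmb}(U,V)$ produced by Proposition \ref{prop:Frechet_structure_on_symp}, are modeled on Fr\'echet spaces of closed $1$-forms on a compact manifold with boundary, which sit as closed subspaces of nuclear Fr\'echet spaces of smooth sections admitting smooth partitions of unity (see Kriegl--Michor \cite{kriegl1997}). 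The construction of $g'$ will proceed by covering the compact image $g(Y)$ with finitely many Fr\'echet charts, mollifying $g$ within each chart with a smoothing whose support avoids $A$, and then gluing the local pieces with a smooth partition of unity subordinate to the cover. Closedness of the representing $1$-form is preserved because convolution commutes with the exterior differential. Straight--line interpolation within each chart will provide the continuous homotopy $G$.

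Surjectivity will follow by applying the lemma with $A = \emptyset$ to a continuous pair $(Z,f)$ representing a class in $\Omega_*(X;\Z/2)$. The resulting smooth approximation $f'$ will be connected to $f$ by a continuous homotopy $F:Z \times [0,1] \to X$, and $(Z \times [0,1], F)$ will be a continuous bordism between $(Z,f) \sqcup (Z,f')$ and the empty pair. Working modulo $2$ then yields $[Z,f] = [Z,f']$ in $\Omega_*(X;\Z/2)$, placing the original class in the image of the forgetful map. For injectivity, suppose $(Z_0,f_0)$ and $(Z_1,f_1)$ are smooth pairs that are continuously bordant via a pair $(Y,g)$ with $\partial Y = Z_0 \sqcup Z_1$ and $g|_{\partial Y} = f_0 \sqcup f_1$. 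I will first use a standard collar extension to arrange that $g$ is smooth on a neighborhood of $\partial Y$, and then apply the relative approximation lemma with $A = \partial Y$. The resulting smooth $g':Y \to X$ with $g'|_{\partial Y} = f_0 \sqcup f_1$ is the required smooth bordism.

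The main obstacle will be the careful verification of the relative approximation lemma in this Fr\'echet setting, in particular the step of extending $g$ to be smooth on a neighborhood of $\partial Y$ before mollifying. This mirrors the classical finite--dimensional story (as in Hirsch's differential topology) but requires one to ensure that the local smoothings and the patching both remain inside $\op{SympEmb}(U,V)$ rather than merely in the ambient space of maps --- which is precisely what the closedness--preserving property of convolution and the smooth partitions of unity on the model Fr\'echet space are designed to guarantee. Once this lemma is established, the surjectivity and injectivity arguments above are entirely formal.
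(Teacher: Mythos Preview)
Your approach is correct and essentially the same as the paper's: both rely on relative smooth approximation of continuous maps (and bordisms) from compact finite-dimensional manifolds into the Fr\'echet manifold $X$, exactly as in the classical finite-dimensional case. The paper's proof is a one-line reference to Conner--Floyd \cite{conner1964}, \S I.9, asserting the argument carries over verbatim, whereas you have sketched the details; your concern about smoothed maps ``remaining inside $\op{SympEmb}(U,V)$'' is slightly overwrought, since once you are in a Fr\'echet chart the model space already \emph{is} the closed $1$-forms, and domain-side convolution (an average of values lying in a closed linear subspace) lands there automatically --- only $C^0$-smallness is needed to stay in the chart.
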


\begin{proof} The argument uses smooth approximation and is identical to the case where $X$ is a finite dimensional smooth manifold, which can be found in \cite[Section I.9]{conner1964}.
\end{proof}

Given the above terminology, we can now prove the main result of this subsection, Proposition \ref{prop:bordism_vs_homology_with_sw_0}. It provides a class of submanifolds for which being null--bordant and being null--homologous are equivalent.

\begin{proposition} \label{prop:bordism_vs_homology_with_sw_0} Let $X$ be a metrizable Fr\'echet manifold, and let $f:Z \to X$ be a smooth map from a closed manifold $Z$ with Stieffel--Whitney class $w(Z) = 1 \in H^*(Z;\Z/2)$. Then $f_*[Z] = 0 \in H_*(X;\Z/2)$ if and only $[Z,f] = 0 \in \Omega_*^\infty(X;\mathbb{Z}_2)$.
\end{proposition}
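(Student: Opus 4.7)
The plan is to reduce the statement to the analogous claim for continuous bordism using Lemma \ref{lem:smooth_bordism_=_cts_bordism}, and then invoke Thom's classical structural theorem for unoriented bordism.

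First I would handle the easy direction. Suppose $[Z, f] = 0 \in \Omega^\infty_*(X;\Z/2)$. By Lemma \ref{lem:smooth_bordism_=_cts_bordism}, $[Z, f] = 0 \in \Omega_*(X;\Z/2)$ as well. There is a natural transformation $\mu : \Omega_n(X;\Z/2) \to H_n(X;\Z/2)$ sending $[Z, f] \mapsto f_*[Z]$; it is well-defined because any null-bordism $(Y, g)$ with $\partial Y = Z$ and $g|_{\partial Y} = f$ provides a singular chain $g_*[Y]$ whose boundary is $f_*[Z]$. Hence $f_*[Z] = 0$.

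For the converse, I would use Thom's theorem that the spectrum $MO$ splits as a wedge of suspensions of Eilenberg--MacLane spectra $H\Z/2$, indexed by a homogeneous basis $\{y_\alpha\}$ of the unoriented bordism ring $\Omega_*(\mathrm{pt};\Z/2)$ (with $y_0 = 1$). For any topological space $X$ this yields a natural isomorphism
\[
\Omega_n(X;\Z/2) \;\cong\; \bigoplus_{\alpha} H_{n - \deg(y_\alpha)}(X;\Z/2),
\]
under which a class $[Z, f]$ corresponds to the tuple $\bigl(f_*(s_\alpha(Z) \cap [Z])\bigr)_\alpha$, where each $s_\alpha \in H^{\deg(y_\alpha)}(Z;\Z/2)$ is a polynomial in the Stiefel--Whitney classes of $Z$ dual to $y_\alpha$ (with $s_0 = 1$). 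The hypothesis $w(Z) = 1$ says that every positive-degree polynomial in the Stiefel--Whitney classes of $Z$ vanishes, so $s_\alpha(Z) = 0$ whenever $\deg(y_\alpha) > 0$. Consequently $[Z, f]$ has at most one nonzero component under the splitting, namely $f_*[Z] \in H_n(X;\Z/2)$. Thus $f_*[Z] = 0$ forces $[Z, f] = 0 \in \Omega_n(X;\Z/2)$, and a second application of Lemma \ref{lem:smooth_bordism_=_cts_bordism} promotes this to $[Z, f] = 0 \in \Omega^\infty_n(X;\Z/2)$.

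The main subtlety is that $X$ is a Fr\'echet manifold rather than a finite-dimensional CW complex, so one must verify that the Thom decomposition remains valid and natural over arbitrary (metrizable) spaces. This is automatic: $\Omega_*(-;\Z/2)$ and $H_*(-;\Z/2)$ are defined on all topological spaces, and the splitting of $MO$ is a stable homotopy equivalence of spectra, inducing a natural isomorphism of the associated generalized homology theories on every space. I therefore expect no genuine obstacle; the essential content is classical algebraic topology, with the Fr\'echet structure entering only through Lemma \ref{lem:smooth_bordism_=_cts_bordism} to bridge smooth and continuous bordism.
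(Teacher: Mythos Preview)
Your argument is correct and rests on the same underlying fact as the paper's---that the unoriented bordism class of $(Z,f)$ is detected by characteristic numbers built from Stiefel--Whitney classes, all of which vanish when $w(Z)=1$---but the packaging is different. The paper first invokes Palais' theorem (Proposition~\ref{prop:Frechet_is_CW}) to replace $X$ by a CW complex, then passes to a \emph{finite} subcomplex $A$ containing $f(Z)$ with $f_*[Z]=0$ in $H_*(A;\Z/2)$ (this reduction, Lemma~\ref{lem:finite_subcomplex}, is handled via stratifold homology), and finally applies the Conner--Floyd Stiefel--Whitney number criterion \cite[Theorem~17.2]{conner1964}, which is stated for finite complexes. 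Your route bypasses both reductions by working directly with the spectrum-level splitting of $MO$, which yields the natural isomorphism $\Omega_*(X;\Z/2)\cong\bigoplus_\alpha H_{*-|\alpha|}(X;\Z/2)$ on \emph{all} spaces; this is cleaner and eliminates the need for Lemmas~\ref{lem:bordism_vs_homology_with_sw_0} and~\ref{lem:finite_subcomplex} and Proposition~\ref{prop:Frechet_is_CW}, at the cost of invoking slightly heavier stable-homotopy machinery. One small point worth making explicit: the classes $s_\alpha$ arising from the Thom splitting are polynomials in the Stiefel--Whitney classes of the stable \emph{normal} bundle, but since $w(TZ)=1$ forces $w(\nu_Z)=1$ by the Whitney product formula, your conclusion is unaffected.
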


\begin{proof} Proposition \ref{prop:bordism_vs_homology_with_sw_0} will follow immediately from the following results. First, by Lemma \ref{lem:smooth_bordism_=_cts_bordism}, it suffices to show $f_*[Z] = 0 \in H_*(X;\Z/2)$ if and only $[Z,f] = 0 \in \Omega_*(X;\mathbb{Z}_2)$. By Proposition \ref{prop:Frechet_is_CW}, we can replace $X$ with a CW complex. Lemma \ref{lem:bordism_vs_homology_with_sw_0} proves the result in this context.
\end{proof}

\begin{proposition}[{\cite[Theorem 14]{palais1966}}]
\label{prop:Frechet_is_CW} 
 A metrizable Fr\'echet manifold is homotopy equivalent to a CW complex.
\end{proposition}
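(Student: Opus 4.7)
The plan is to establish this via the theory of absolute neighborhood retracts (ANRs), combined with Milnor's classical result that every ANR has the homotopy type of a CW complex. First I would reduce the problem to showing that any metrizable Fr\'echet manifold $X$ is a metric ANR; Milnor's theorem then yields the conclusion immediately.

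To verify the ANR property I would argue locally and then globally. Every point of $X$ has a neighborhood homeomorphic to an open subset $U$ of some Fr\'echet space $E$. By the Dugundji extension theorem, every convex subset of a locally convex metrizable topological vector space is an absolute retract, and in particular every open subset of a Fr\'echet space is a metric ANR; thus $X$ is locally an ANR. By Hanner's local characterization of ANRs, any metrizable space which is locally an ANR is itself an ANR. Since $X$ is metrizable by hypothesis, it follows that $X$ is a metric ANR, and Milnor's theorem completes the argument.

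The main obstacle will be carefully handling the infinite-dimensional setting, and in particular verifying that Dugundji's extension theorem and Hanner's local-to-global principle both hold for Fr\'echet (as opposed to Banach or finite-dimensional) manifolds. These arguments do go through in the Fr\'echet setting because Fr\'echet spaces are locally convex and metrizable, and the relevant extension and retraction theorems are formulated at that level of generality. This is essentially the route taken by Palais in the cited reference, and the reader can either invoke it as a black box or unpack the chain of ANR results sketched above.
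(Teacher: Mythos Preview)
The paper does not give its own proof of this proposition; it simply cites \cite[Theorem 14]{palais1966} as a black box. Your outline via Dugundji's extension theorem, Hanner's local-to-global ANR criterion, and Milnor's theorem on the homotopy type of ANRs is correct and is essentially the route Palais takes in the cited reference, so there is nothing to compare.
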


\begin{lemma} \label{lem:bordism_vs_homology_with_sw_0} Let $X$ homotopy equivalent to a CW complex, and let $f:Z \to X$ be a continuous map from a closed manifold $Z$ with Stieffel--Whitney class $w(Z) = 1 \in H^*(Z;\Z/2)$. Then $f_*[Z] = 0 \in H_*(X;\Z/2)$ if and only $[Z,f] = 0 \in \Omega_*(X;\mathbb{Z}_2)$.
\end{lemma}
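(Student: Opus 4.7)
The plan is to prove the two implications separately. The forward direction is routine and follows from the long exact sequence of a pair; the reverse direction is the heart of the matter and relies on Thom's structure theorem for unoriented bordism.

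For the forward direction ($[Z,f]=0 \Rightarrow f_*[Z]=0$), I would observe that if $(Z,f)$ bounds via $(Y,g)$, then in the long exact sequence of the pair $(Y,Z)$ with $\Z/2$ coefficients the connecting homomorphism sends the relative fundamental class $[Y,Z]$ to the fundamental class $[Z]$. Hence $\iota_*[Z]=0$ in $H_n(Y;\Z/2)$, where $\iota:Z \hookrightarrow Y$ is the inclusion, and so $f_*[Z]=g_*\iota_*[Z]=0$ in $H_n(X;\Z/2)$.

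For the reverse direction, the main tool is Thom's theorem that the unoriented bordism spectrum $MO$ splits as a wedge of suspensions of the mod $2$ Eilenberg--MacLane spectrum $H\Z/2$. For any CW complex $X$, this yields a natural direct sum decomposition
\[
\Omega_n(X;\Z/2) \;\cong\; \bigoplus_k H_{n-n_k}(X;\Z/2),
\]
where $\{n_k\}$ is a certain multiset of nonnegative integers containing $0$ exactly once, say $n_0=0$. The $n_0=0$ component of $[Z,f]$ is $f_*[Z]$, while each component in a summand with $n_k>0$ has the form $f_*\bigl(\phi_k(w(Z))\cap [Z]\bigr)$ for a polynomial $\phi_k$ of positive degree in the Stiefel--Whitney classes of $Z$. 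Under the hypothesis $w(Z)=1$, every $w_i(Z)$ with $i\geq 1$ vanishes, hence $\phi_k(w(Z))=0$ whenever $\deg \phi_k>0$, and all such summands of $[Z,f]$ vanish. Combined with $f_*[Z]=0$, this gives $[Z,f]=0$.

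The step I expect to require the most care is the explicit identification of the components of the Thom splitting with the characteristic-number pushforwards $f_*(\phi_k(w(Z))\cap[Z])$; once this identification is in hand the argument reduces to an algebraic triviality. An alternative that avoids invoking the Thom splitting directly would be to run the mod $2$ Atiyah--Hirzebruch spectral sequence for $\Omega_*(X;\Z/2)$. It is classical that this spectral sequence collapses at $E_2$, and one could extract the conclusion from the resulting filtration together with the edge homomorphism $\Omega_n(X;\Z/2)\to H_n(X;\Z/2)$.
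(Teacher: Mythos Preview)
Your proposal is correct and rests on the same underlying idea as the paper: both arguments reduce to the fact that unoriented bordism classes are detected by generalized Stiefel--Whitney numbers. The technical implementations differ, however. For the nontrivial direction the paper cites a theorem of Conner--Floyd stating that $[Z,f]=0$ in $\Omega_*(A;\Z/2)$ if and only if all numbers $\langle w_{i_1}(Z)\cdots w_{i_k}(Z)\,f^*\alpha,[Z]\rangle$ vanish; once $w(Z)=1$ and $f_*[Z]=0$ this is immediate. Because that theorem, as quoted, is stated for finite complexes, the paper first passes from $X$ to a finite subcomplex $A\subset X$ containing $f(Z)$ with $f_*[Z]=0$ in $H_*(A;\Z/2)$, via a separate lemma proved using Kreck's stratifold homology. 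Your route through the spectrum-level splitting $MO\simeq\bigvee_k\Sigma^{n_k}H\Z/2$ is natural and valid for arbitrary spaces, so it sidesteps the finite-subcomplex reduction entirely; in exchange you must justify (as you note) the identification of the projections $\Omega_n(X)\to H_{n-n_k}(X;\Z/2)$ with the pushforwards $f_*(\phi_k(w(Z))\cap[Z])$, which the paper gets for free from the Conner--Floyd citation. Both routes are standard, and neither is materially shorter once all details are supplied.
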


\begin{remark} Crucially, we make no finiteness assumptions on the CW structure.
\end{remark}

\begin{proof} ($\Rightarrow$) Suppose that $f_*[Z] = 0 \in H_2(Z;\Z/2)$. Pick a homotopy equivalence $\varphi:X \simeq X'$ with a CW complex $X'$. Such an equivalence induces an isomorphism of unoriented bordism groups $\Omega_*(X;\mathbb{Z}_2) \simeq \Omega_*(X';\mathbb{Z}_2)$, so it suffices to show that the pair $(Z,\varphi \circ f)$ is null--bordant, or equivalently to assume that $X$ is a CW complex to begin with.

So assume that $X$ is a CW complex. By Lemma \ref{lem:finite_subcomplex}, we can find a finite sub--complex $A \subset X$ such that $f(Z) \subset A$ and $f_*[Z] = 0 \in H_*(A;\Z/2)$. By Theorem 17.2 of \cite{conner1964}, $[Z,f] = 0 \in \Omega_*(A;\mathbb{Z}_2)$ if and only if the Stieffel--Whitney numbers $\op{sw}_{\alpha,I}[Z,f]$ are identically $0$. Recall that the Stieffel--Whitney number $\op{sw}_{\alpha,I}[Z,f]$ associated to $[Z,f]$, a cohomology class $\alpha \in H_k(A;\mathbb{Z}_2)$ and a partition $I = (i_1,\dots,i_k)$ of $\op{dim}(Z) - k$ is defined to be
\[
\op{sw}_{\alpha,I}[Z,f] = \langle w_{i_1}(Z)w_{i_2}(Z) \dots w_{i_k}(Z) f^*\alpha,[Z]\rangle \in \mathbb{Z}_2.
\]
Here $w_j(Z) \in H^j(Z;\mathbb{Z}_2)$ denotes the $j$--th Stieffel--Whitney class of $Z$. By assumption, $w(Z) = 1$ and so $w_j(Z) = 0$ for all $j \neq 0$. In particular, the only possible nonzero Stieffel--Whitney numbers have $I = (0)$. But we see that
\[
\op{sw}_{\alpha,(0)}[Z,f] = \langle f^*\alpha,[Z]\rangle = \langle \alpha,f_*[Z]\rangle = 0.
\]
Therefore, $\op{sw}_{\alpha,I}[Z,f] \equiv 0$ and $[Z,f]$ must be null--bordant.

($\Leftarrow$) This direction is completely obvious, since the map $\Omega_*(X) \to H_*(X;\Z/2)$ given by $[Z,f] \mapsto f_*[Z]$ is well defined.
\end{proof}

\begin{lemma} \label{lem:finite_subcomplex} Let $X$ be a CW complex, and let $f:Z \to X$ be a map from a closed manifold $Z$ with $f_*[Z] = 0 \in H_*(X;\Z/2)$. Then there exists a finite sub--complex $A \subset X$ with $f(Z) \subset A$ and $f_*[Z] = 0 \in H_*(A;\Z/2)$.
\end{lemma}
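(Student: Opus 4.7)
The plan is to exploit the standard fact that every compact subset of a CW complex is contained in a finite subcomplex (see e.g.\ Proposition~A.1 of Hatcher's \emph{Algebraic Topology}). I would apply this fact twice: once to the image $f(Z)$ in order to obtain an initial finite subcomplex $A_0$, and once to a bounding chain for $f_*[Z]$ in order to enlarge $A_0$ to a finite subcomplex $A$ in which $f_*[Z]$ already vanishes. The same simple support-is-compact principle drives both applications.

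First, since $Z$ is a closed manifold it is compact, so $f(Z) \subset X$ is compact, and hence is contained in some finite subcomplex $A_0 \subset X$.

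Next, choose a singular cycle $c \in C_n(Z;\Z/2)$ representing the fundamental class $[Z]$, so that the pushforward $f_{\#}(c) \in C_n(X;\Z/2)$ represents $f_*[Z]$. The hypothesis $f_*[Z] = 0 \in H_n(X;\Z/2)$ produces a singular chain $d \in C_{n+1}(X;\Z/2)$ with $\partial d = f_{\#}(c)$. Since $d$ is a finite $\Z/2$--linear combination of singular simplices $\sigma_i : \Delta^{n+1} \to X$, its geometric support $\bigcup_i \sigma_i(\Delta^{n+1})$ is compact. Its union with $f(Z)$ is therefore a compact subset of $X$, and a second invocation of the compactness result furnishes a finite subcomplex $A \subset X$ containing both this support and $A_0$.

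By construction $f(Z) \subset A_0 \subset A$, while the chain identity $\partial d = f_{\#}(c)$ now holds inside $C_*(A;\Z/2)$. Hence $f_*[Z] = 0 \in H_*(A;\Z/2)$, as required. There is no serious obstacle in the argument: the only nontrivial input beyond the definition of singular homology is the folklore compactness lemma for CW complexes, and crucially no finiteness assumption on the ambient CW structure of $X$ is needed, since we only ever use compactness of the specific supports at hand.
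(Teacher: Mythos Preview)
Your proof is correct and more elementary than the paper's. You work directly with singular chains: a singular $(n+1)$--chain $d$ witnessing $\partial d = f_\#(c)$ has compact support, so together with $f(Z)$ it sits inside a finite subcomplex $A$, and the same boundary relation then holds in $C_*(A;\Z/2)$.

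The paper instead invokes Kreck's stratifold homology $sH_*(\,\cdot\,;\Z/2)$. Since this theory satisfies the Eilenberg--Steenrod axioms, it agrees with singular homology on CW complexes, and the fundamental class of $Z$ is represented tautologically by $(Z,\op{Id})$. The vanishing $f_*[Z]=0$ then means the pair $(Z,f)$ is null--bordant via a compact $c$--stratifold $(Y,g)$; compactness of $g(Y)$ yields the finite subcomplex $A$, and the null--bordism persists in $A$. In short, the paper replaces your singular bounding chain $d$ by a geometric bounding stratifold $Y$.

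Both arguments hinge on the same lemma (compact subsets of a CW complex lie in finite subcomplexes) and differ only in the model of homology used to produce a compact ``witness'' for the vanishing. Your approach avoids importing the stratifold machinery and is self--contained once one knows Hatcher's Proposition~A.1; the paper's approach is more in keeping with the surrounding bordism--theoretic language of \S\ref{subsection:bordism_groups}, but is otherwise no shorter.
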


\begin{proof} A very convenient tool for this is the stratifold homology theory of $\cite{kreck2010}$, which we now review briefly. 

Given a space $M$, the $n$--th stratifold group $sH_n(M;\Z/2)$ with $\Z/2$--coefficients (see Proposition 4.4 in \cite{kreck2010}) is generated by equivalence classes of pairs $(S,g)$ of a compact, regular stratifold $S$ and a continuous map $g:S \to M$. Two pairs $(S_i,g_i)$ for $i \in \{0,1\}$ are equivalent if they are bordant by a $c$--stratifold, i.e. if there is a pair $(T,h)$ of a compact, regular $c$--stratifold and a continuous map $g:T \to M$ such that $(\partial T,h|_{\partial T}) = (S_0 \sqcup S_1,g_0 \sqcup g_1)$ (see Chapter 3 and Section 4.4 of \cite{kreck2010}). Given a map $\varphi:M \to N$ of spaces, the pushforward map $\varphi_*:sH(M;\mathbb{Z}_2) \to sH(M;\mathbb{Z}_2) $ on stratifold homology is given (on generators) by $[S,g] \mapsto [S,\varphi \circ g] = \varphi_*[\Sigma,g]$.

Stratifold homology satisfies the Eilenberg--Steenrod axioms (see Chapter 20 of \cite{kreck2010}), and thus if $M$ is a CW complex then there is a natural isomorphism $sH_*(M;\mathbb{Z}_2) \simeq H_*(M;\mathbb{Z}_2)$. If $M$ is a manifold of dimension $n$, the fundamental class $[M] \in sH_n(M;\mathbb{Z}_2)$ is given by the tautological equivalence class $[M] = [M,\op{Id}]$.

The proof of the lemma is simple with the above machinery in place. Since $f_*[Z] = 0$, the pair $(Z,f)$ must be null--bordant via some compact $c$--stratifold $(Y,g)$. Since $Y$ and its image $g(Y)$ are both compact, we can choose a sub--complex $A \subset X$ such that $g(T) \subset A \subset X$. Then the pair $(Z,f)$ are null--bordant by $(Y,g)$ in $A$ as well, so that $[Z,f] = 0 \in sH_*(A;\mathbb{Z}_2)$ and thus $f_*[Z] = 0 \in H_*(A;\mathbb{Z}_2)$ via the isomorphism $sH_*(A;\mathbb{Z}_2) \simeq H_*(A;\mathbb{Z}_2)$.\end{proof}

\subsection{Weinstein neighborhood theorem with boundary} \label{subsection:weinstein_neighborhood_w_bdry} In this section, we prove the analogue of the Weinstein neighborhood theorem for a Lagrangian $L$ with boundary, within a symplectic manifold $X$ with boundary. We could find no reference for this fact in the literature.

\begin{proposition}[Weinstein neighborhood theorem with boundary]
 \label{prop:weinstein_nbhd} 
 Let $(X,\omega)$ be a symplectic manifold with boundary $\partial X$ and let $L \subset X$ be a properly embedded, Lagrangain submanifold with boundary $\partial L \subset \partial X$ transverse to $T(\partial X)^\omega$.

Then there exists a neighborhood $U \subset T^*L$ of $L$ (as the zero section), a neighborhood $V \subset X$ of $L$ and a diffeomorphism $f:U \simeq V$ such that $\varphi^*(\omega|_V) = \omega_{\op{std}}|_U$. 
\end{proposition}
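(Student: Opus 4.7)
The plan is to adapt the standard proof of the Weinstein Lagrangian neighborhood theorem to this setting, in three stages: construct a Lagrangian complement $E \subset TX|_L$ to $TL$ adapted to $\partial X$ along $\partial L$; exponentiate along $E$ to produce a diffeomorphism $\phi$ of neighborhoods sending $H := T^*L|_{\partial L}$ into $\partial X$; then correct via a relative Moser isotopy that preserves both $L$ and $H$. The novelty compared with the boundaryless case lies in handling the hypersurface $H$.

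For the first stage, the key observation is that at $p \in \partial L$ any Lagrangian subspace of $T_pX$ contained in the coisotropic hyperplane $T_p(\partial X)$ must contain the characteristic line $N_p := T_p(\partial X)^\omega$, and projects to a Lagrangian subspace of dimension $n-1$ in the symplectic quotient $T_p(\partial X)/N_p$. The transversality hypothesis $T_p(\partial L) \cap N_p = 0$ forces $T_p(\partial L)$ to descend to a maximal isotropic subspace of $T_p(\partial X)/N_p$, which therefore admits a Lagrangian complement there; its preimage in $T_p(\partial X)$ is the desired $E_p$. The set of such choices is nonempty and contractible at each $p$, so the associated bundle over $\partial L$ admits a section, which extends over all of $L$ because Lagrangian complements to $TL$ in $TX|_L$ form an affine bundle with contractible fibers. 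Identifying $E \cong T^*L$ via $\omega$ and exponentiating along $E$ using a Riemannian metric on $X$ that is a product near $\partial X$ then yields the second stage: a diffeomorphism $\phi : U \to V$ of neighborhoods with $\phi(H \cap U) \subset \partial X$.

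The third stage is a relative Moser argument applied to $\omega_t := (1-t)\omega_{\op{std}} + t \phi^*\omega$, which is nondegenerate on a small neighborhood of $L$ because the endpoints agree along $L$. The main obstacle is to ensure the Moser isotopy preserves $H$, equivalently that its generating vector field $X_t$ is tangent to $H$ along $H$. A direct computation shows that the construction of $E$ forces the characteristic direction of $\omega_{\op{std}}|_H$ at points of $\partial L$ (the fiber direction conormal to $\partial L$) to be sent by $d\phi$ to the characteristic line $N \subset T(\partial X)$, so the characteristic foliations of $\omega_{\op{std}}|_H$ and $\phi^*\omega|_H$ coincide along $\partial L$. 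I would then proceed in two sub-steps: first a presymplectic Moser within $H$, permitted since both forms have matching characteristic foliations along $\partial L$, producing a diffeomorphism $\psi_1$ of a neighborhood of $\partial L$ in $T^*L$ that fixes $L$ and satisfies $(\psi_1^*\phi^*\omega)|_H = \omega_{\op{std}}|_H$; second, a relative Poincar\'e lemma produces a primitive $\alpha$ of $\psi_1^*\phi^*\omega - \omega_{\op{std}}$ that vanishes on both $L$ and $H$, so that the Moser vector field $X_t$ satisfying $\iota_{X_t}\omega_t = -\alpha$ vanishes on $L \cup H$ and integrates to a diffeomorphism $\psi_2$ fixing $L$ and $H$ pointwise with $\psi_2^*\psi_1^*\phi^*\omega = \omega_{\op{std}}$. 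The composition $f := \phi \circ \psi_1 \circ \psi_2$ is the required symplectomorphism.
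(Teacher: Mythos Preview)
Your overall architecture is sound and close in spirit to the paper's, but the ``presymplectic Moser within $H$'' step hides exactly the difficulty the paper isolates, and your stated justification for it is not enough.

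Concretely: on $H$ you have two corank--$1$ closed forms $\omega_0 = \omega_{\op{std}}|_H$ and $\omega_1 = \phi^*\omega|_H$ that agree along $\partial L$. A Moser argument needs a primitive $\sigma$ of $\omega_1-\omega_0$ with $\sigma|_{\ker\omega_t}=0$ for every $t$, and ``matching characteristic foliations along $\partial L$'' does not give this away from $\partial L$, where $\ker\omega_0$ and $\ker\omega_1$ can differ. You must first run an isotopy that straightens the two characteristic line fields to agree on a full neighborhood of $\partial L$ in $H$; only then can fiber integration produce a $\sigma$ annihilating the (now common) kernel. This straightening step is precisely Lemma~\ref{lem:trivial_foliation} in the paper, and the compatible fiber--integration step is Lemma~\ref{lem:fiber_integration}. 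So your sub-step (presymplectic Moser on $H$) is not a shortcut: it repackages the paper's core technical work without supplying it.

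There is also a small slip in your final Moser step. From $\alpha|_H=0$ you only get $\omega_t(X_t,v)=0$ for all $v\in TH$, hence $X_t\in (TH)^{\omega_t}\subset TH$; the vector field is \emph{tangent} to $H$, not zero on $H$. Tangency is what you need, so the conclusion survives, but the claim as written is incorrect.

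By comparison, the paper avoids the two--stage Moser entirely. After the initial exponential map it applies Lemma~\ref{lem:trivial_foliation} once to make the characteristic line fields of $\omega_{\op{std}}$ and $\varphi^*\omega$ agree on all of $\partial U$, then uses Lemma~\ref{lem:fiber_integration} to get a primitive $\sigma$ with $\sigma|_L=0$ and $\sigma|_{\kappa}=0$ (only along the characteristic direction, not on all of $TH$). That weaker condition already forces the Moser field $Z_t$ to lie in $(\kappa)^{\omega_t}=T(\partial U)$, which is exactly what is required. This is more economical than your route: you aim for the stronger intermediate normalization $(\psi_1^*\phi^*\omega)|_H=\omega_{\op{std}}|_H$, which costs an extra Moser argument on $H$ but buys nothing additional for the conclusion.
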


\begin{proof} The proof has two steps. First, we construct neighborhoods $U \subset T^*L$ and $V \subset X$ of $L$, and a diffeomorphism $\varphi:U \simeq V$ such that
\begin{equation} \label{equation:prop:weinstein_nbhd:1} 
\varphi|_L = \op{Id}, \qquad \varphi^*(\omega|_V)|_L = \omega_{\op{std}}|_L, \qquad T(\partial U)^{\omega_{\op{std}}} = T(\partial U)^{\varphi^*\omega}.
\end{equation}
Here $T(\partial U)^{\omega_{\op{std}}} \subset T(\partial U)$ is the symplectic perpendicular to $T(\partial U)$ with respect to $\omega_{\op{std}}$ (and similarly for $T(\partial U)^{\varphi^*\omega}$. Second, we apply Lemma \ref{lem:fiber_integration} and a Moser type argument to conclude the result.

(Step 1) Let $J$ be a compatible almost complex structure on $X$ and $g$ be the induced metric on $L$. Recall that the normal bundle $\nu_g L$ with respect to $g$ is a bundle over $L$ with Lagrangian fiber, and that $J:TL \to \nu_g L$ gives a natural isomorphism. Let $\Phi^g:T^*L \to TL$ denote the bundle isomorphism induced by the metric $g$ and let $\op{exp}^g$ denote the exponential map with respect to $g$. 

Since $L$ is compact, we can choose a tubular neighborhood $U'$ of $\nu L$ such that $\op{exp}^g:U \to X$ is a diffeomorphism onto its image $V$. We then let 
\[U := [J \circ \Phi^g]^{-1}(U') \subset T^*L\]
and also
\[
\phi^g:U \simeq V, \qquad (x,v) \mapsto \op{exp}_x^g(J \circ \Phi_g(v)).
\]
Note that $\phi^g|_L = \op{Id}$ and $[\phi^g]^*\omega|_L = \omega_{\op{std}}|_L$ by the same calculations as in \cite[Theorem 3.4.13]{mcduff2017}. We now must modify $U$, $V$, and $\phi^g$ to satisfy the last condition of (\ref{equation:prop:weinstein_nbhd:1}). 

To this end, we apply Lemma \ref{lem:trivial_foliation}. Taking $\kappa_0 = T(\partial U)^{\omega_{\op{std}}}$ and $\kappa_1 = T(\partial U)^{[\phi^g]^*\omega}$, we acquire a neighborhood $N \subset \partial(T^*L)$ of $\partial L$ and a family of embeddings $\psi:N \times I \to \partial(T^*L)$ with the following four properties:
\[\psi_t|_{\partial L} = \op{Id}, \qquad d(\psi_t)_u = \op{Id} \text{ for }u \in \partial L ,\qquad \psi_0 = \op{Id},\]
\[ [\psi_1]_*(T(\partial U)^{\omega_{\op{std}}}) = T(\partial U)^{[\phi^g]^*\omega}.\]
Note here that we are using the fact that $T(\partial U)^{\omega_{\op{std}}}|_L = T(\partial U)^{[\phi^g]^*\omega}|_L$ already by the construction of $\phi^g$. By shrinking $N$ and $U$, we can simply assume that $N = \partial U$. Let $\op{tc}:[0,1) \times \partial U \simeq T \subset U$ be tubular neighborhood coordinates near boundary. By choosing the tubular neighborhood coordinates $\op{tc}:[0,1) \times \partial U \simeq T$ appropriately, we can also assume that $\op{tc}([0,1) \times \partial L) = L \cap T$. We define a map $\Phi:U \to T^*L$ by
\[
\Phi(u) = \left\{\begin{array}{cc}
(s,\psi_{1-s}(v)) & \text{ if }u = (s,v) \in [0,1) \times \partial U \text{ via}\op{tc},\\
u & \text{ otherwise}.
\end{array}
\right.
\]
The map $\Phi$ has the following properties which are analogous to those of $\psi_s$:
\[
\Phi|_L = \op{Id}, \qquad d(\Phi)_u = \op{Id} \text{ for }u \in L, \qquad \Phi_*(T(\partial L)^{\omega_{\op{std}}}) = T(\partial L)^{[\phi^g]^*\omega}.
\]
Also note that $\Phi$ is smooth since $\psi_t$ is constant for $t$ near $0$ and $1$. We thus define $f$ as the composition $\varphi = \phi^g \circ \Phi$. It is immediate that $f$ has the properties in (\ref{equation:prop:weinstein_nbhd:1}).

(Step 2) We closely follows the Moser type argument of \cite[Lemma 3.2.1]{mcduff2017}. By shrinking $U$, we may assume that it is an open disk bundle. Let $\omega_t = (1-t)\omega_{\op{std}} + tf^*\omega$ and $\tau = \frac{d}{dt}(\omega_t) = f^*\omega - \omega_{\op{std}}$. Let $\kappa = T(\partial U)^{\omega_t}$ (by the previous work, it does not depend on $t$). Note that $\tau$ satisfies all of the assumptions of Lemma \ref{lem:fiber_integration}(\ref{equation:fiber_integration_tau}). We prove that $\kappa$ is invariant under the scaling map $\phi_t(x,u) = (x,tu)$ in Lemma \ref{lem:char_foliation_invt}. We can thus find a $\sigma$ satisfying the properties listed in (\ref{equation:fiber_integration_sigma}). 

Let $Z_t$ be the unique family of vector fields satisfying $\sigma = \iota(Z_t)\omega_t$. Due to the properties of $\sigma$, $Z_t$ satisfies the following properties for each $t$.
\[
Z_t|_L = 0,  \qquad Z_t|_{\partial U} \in T(\partial U) \text{ for all }t.
\]
The first property is immediate, while the latter is a consequence of the fact that
\[
\omega_t(Z_t,\cdot)|_{\kappa} = \sigma|_{\kappa} = 0 \]
implies
\[ Z_t \in (\kappa)^{\omega_t} = T(\partial U).
\]
These two properties imply that $Z_t$ generates a map $\Psi:U' \times [0,1] \to U$ for some smaller tubular neighborhood $U' \subset U$ with the property that $\Psi_t|_{L} = \op{Id}$ and $\Psi^*_t\omega_t = \omega_0$ (see \cite[\S 3.2]{mcduff2017}, as the reasoning is identical to the closed case). In particular, we get a map $\Psi_1:U' \to U$ with $\Psi_1|_L = \op{Id}$ and $\Psi_1^*f^*\omega$. By shrinking $U$, taking $\varphi = f \circ \Psi_1$ and taking $V = \varphi(U)$, we at last acquire the desired result. \end{proof}

The remainder of this section is devoted to proving the various lemmas that we used in the proof above.

\begin{lemma}[Fiber integration with boundary]
\label{lem:fiber_integration} 
Let $X$ be a compact manifold with boundary, $\pi:E \to X$ be a rank $k$ vector bundle with metric and $\pi:U \to X$ be the (open) disk bundle of $E$ with closure $\overline{U}$. Let $\kappa \subset T(\partial U)$ be a distribution on $\partial U$ such that $d\phi_{t}(\kappa_u) = \kappa_{\phi_t(u)}$ for all $u \in U$, where $\phi:U \times I \to U$ denote the family of smooth maps given by $\phi_t(x,u) := (x,tu)$. 

Finally, suppose that $\tau \in \Omega^{k+1}(\overline{U})$ is a $(k+1)$--form such that
\begin{equation} \label{equation:fiber_integration_sigma} d\tau = 0, \qquad \tau|_X = 0, \qquad (\iota^*_{\partial X}\tau)|_\kappa = 0.
\end{equation}
Then there exists a $k$--form $\sigma \in \Omega^k(\overline{U})$ with
\begin{equation} \label{equation:fiber_integration_tau} d\sigma = \tau, \qquad \sigma|_X = 0, \qquad (\iota^*_{\partial X}\sigma)|_\kappa = 0. \end{equation}
\end{lemma}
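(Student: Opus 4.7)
The plan is to construct $\sigma$ in two stages: first, produce a primitive $\sigma_0$ of $\tau$ using the standard homotopy operator associated to the scaling family $\phi_t$, and then add an exact correction $d\eta$ to $\sigma_0$ in order to enforce the third boundary condition on $\kappa$.

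For the first stage I would define
\[
\sigma_0 := K\tau, \qquad (K\tau)_u(v_1,\dots,v_k) := \int_0^1 \tau_{\phi_t(u)}\!\left(\tfrac{d}{dt}\phi_t(u),\; d\phi_t v_1,\dots, d\phi_t v_k\right)dt.
\]
The standard homotopy identity $\phi_1^* - \phi_0^* = dK + K d$, combined with $d\tau = 0$ and the vanishing $\phi_0^*\tau = 0$ (which holds because $\phi_0$ factors through the zero section $X \subset \overline U$ and $\tau|_X = 0$), gives $d\sigma_0 = \tau$. Moreover the generating vector field $\tfrac{d}{dt}\phi_t$ vanishes identically on $X$, so the integrand in the definition of $K\tau$ is zero at every point of $X$, yielding $\sigma_0|_X = 0$. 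Thus $\sigma_0$ automatically satisfies the first two conditions of (\ref{equation:fiber_integration_tau}).

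The third condition $(\iota_{\partial X}^*\sigma)|_\kappa = 0$ is more delicate, because the generator of $\phi_t$ is the radial Euler field in the fiber direction, which is tangent to $\partial U$ (since $\phi_t$ preserves $\partial U$) but need not lie in $\kappa$. Consequently $\sigma_0|_\kappa$ need not vanish on $\partial U$. To correct this, I would construct a $(k-1)$-form $\eta$ on $\overline U$ with $\eta|_X = 0$ and $(d\eta)|_\kappa = -\sigma_0|_\kappa$ on $\partial U$, and then set $\sigma := \sigma_0 + d\eta$. The construction of $\eta$ proceeds by integrating $\sigma_0|_\kappa$ along the orbits of $\phi_t$ within $\partial U$, starting from $X \cap \partial U$ where the initial condition $\eta = 0$ is prescribed; the $\phi_t$-invariance of $\kappa$ makes this leafwise integration well defined, while $d\tau = 0$ and the hypothesis $(\iota_{\partial X}^*\tau)|_\kappa = 0$ supply the cocycle conditions needed for the leafwise primitive to patch together into a globally smooth form. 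The resulting $\sigma := \sigma_0 + d\eta$ then satisfies $d\sigma = \tau$ (since $d^2\eta = 0$), $\sigma|_X = 0$ (since $\eta|_X = 0$), and $\sigma|_\kappa = 0$ on $\partial U$ by construction.

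The main technical obstacle lies in this second stage: extending the leafwise-integrated $\eta$ smoothly across the zero section $X$, where the scaling flow $\phi_t$ degenerates, and verifying that the stated hypotheses are exactly what make this patching compatible globally. Once that is handled, the verification of the three output conditions is routine.
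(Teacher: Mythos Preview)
Your first stage is already the complete proof and coincides with the paper's argument; the second stage is unnecessary. The point you missed is that $\sigma_0 = K\tau$ automatically satisfies the third condition, directly from the hypotheses. Take $u \in \partial U$, $v_1 \in \kappa_u$, and $v_2,\dots,v_k \in T_u(\partial U)$. Since $\phi_t$ preserves $\partial U$, the vector $\tfrac{d}{dt}\phi_t(u)$ is tangent to $\partial U$, and so are all the $d\phi_t v_i$. Crucially, the $\phi_t$--invariance of $\kappa$ gives $d\phi_t v_1 \in \kappa_{\phi_t(u)}$. Thus the integrand
\[
\tau_{\phi_t(u)}\!\left(\tfrac{d}{dt}\phi_t(u),\, d\phi_t v_1,\dots, d\phi_t v_k\right)
\]
is the pullback of $\tau$ to $\partial U$ evaluated on a tuple one of whose entries lies in $\kappa$; this vanishes by the hypothesis $(\iota^*_{\partial X}\tau)|_\kappa = 0$. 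Hence $(\iota^*_{\partial X}\sigma_0)|_\kappa = 0$ pointwise, and $\sigma := \sigma_0$ already works.

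Your observation that ``the Euler field need not lie in $\kappa$'' is correct but irrelevant: what matters is not whether the contracting vector $\tfrac{d}{dt}\phi_t$ lies in $\kappa$, but whether the \emph{test vector} $v_1 \in \kappa$ you feed in afterward is carried by $d\phi_t$ back into $\kappa$ --- and it is, precisely by the invariance hypothesis on $\kappa$. That is exactly why this hypothesis appears in the statement. The paper's proof is your Stage~1 together with this one--line check; no correction term $d\eta$ is needed.
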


\begin{proof} We use integration over the fiber, as in \cite[p. 109]{mcduff2017}. Note that the maps $\phi_t:U \to \phi_t(U) \subset U$ are diffeomorphisms for each $t > 0$, $\phi_0 = \pi$, $\phi_1 = \op{Id}$ and $\phi_t|_X = \op{Id}$. Therefore we have
\[
\phi_0^*\tau = 0, \qquad \phi^*_1\tau = \tau.
\]
We may define a vector field $Z_t$ for all $t > 0$ and a $k$--form $\sigma_t$ for all $t \ge 0$ by
\[Z_t := (\frac{d}{dt}\phi_t) \circ \phi_t^{-1} \text{ for }t > 0, \qquad \sigma_t := \phi^*_t(\iota(Z_t)\tau) \text{ for }t \ge 0.\]
Although $Z_t$ is singular at $t = 0$, as in \cite{mcduff2017} one can verify in local coordinates that $\sigma_t$ is smooth at $t = 0$. Since $Z_t|_X = 0$, the $k$--form $\sigma_t$ satisfies $\sigma_t|_X = 0$. Furthermore, for any vector field $K \in \Gamma(\kappa)$ on $\partial X$ which is parallel to $\kappa$, we have $\iota(K)\sigma_t = \phi^*_t(\iota(Z_t)\iota(d\phi_t(K))\tau) = 0$ on the boundary, so that $\iota^*_{\partial X}(\sigma_t)|_{\kappa} = 0$. Finally, $\sigma_t$ satisfies the equation
\[
\begin{split}
\tau &= \phi^*_1\tau - \phi^*_0\tau = \int_0^1 \frac{d}{dt}(\phi^*_t\tau) dt = \int_0^1 \phi^*_t(\mathcal{L}_{X_t}\tau) dt \\
&= \int_0^1 d(\phi^*_t(\iota(X_t)\tau)) dt = \int_0^1 d\sigma_t dt = d(\int_0^1 \sigma_t dt).
\end{split}\]
Therefore, if we define $\sigma := \int_0^1 \sigma_t dt$, it is simple to verify the desired properties using the corresponding properties for $\sigma_t$.\end{proof}

\begin{lemma} \label{lem:trivial_foliation} Let $U$ be a manifold and $L \subset U$ be a closed submanifold. Let $\kappa_0,\kappa_1$ be rank $1$ orientable distributions in $TU$ such that $\kappa_i|_L \cap TL = \{0\}$ and $\kappa_0|_L = \kappa_1|_L$. 

Then there exists a neighborhood $U' \subset U$ of $L$ and a family of smooth embeddings $\psi:U' s\times I \to U$ with the following four properties:
\[
\psi_t|_{\partial L} = \op{Id}, \qquad d(\psi_t)_u = \op{Id} \text{ for }u \in L, \qquad \psi_0 = \op{Id}, \qquad [\psi_1]_*(\kappa_0) = \kappa_1.
\]
Furthermore, we can take $\psi_t$ to be $t$--independent for $t$ near $0$ and $1$.
\end{lemma}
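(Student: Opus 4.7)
The plan is to straighten each line field by flowing out from $L$, yielding two tubular model diffeomorphisms $\Phi_0,\Phi_1\colon L\times(-\eps,\eps)\to U$, and then to set $\psi_t := \Phi_t\circ\Phi_0^{-1}$ using a smooth interpolation $\Phi_t$ between them. The hypothesis that $\kappa_i|_L$ is transverse to $TL$ is what makes such straightening coordinates exist.

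First I would use the orientability of $\kappa_0,\kappa_1$ to choose smooth nowhere-vanishing sections $X_0\in\Gamma(\kappa_0)$ and $X_1\in\Gamma(\kappa_1)$. Since $\kappa_0|_L=\kappa_1|_L$, the restrictions $X_0|_L$ and $X_1|_L$ are everywhere proportional on $L$; writing $X_0|_L=f\cdot X_1|_L$ for a nowhere-zero smooth $f\colon L\to\R$, extending $f$ to a nowhere-zero smooth function on a neighborhood of $L$ in $U$, and replacing $X_1$ by $fX_1$, I can arrange $X_0|_L=X_1|_L$. Then the family $X_t := (1-t)X_0+tX_1$ agrees with $X_0$ along $L$, is therefore nowhere zero and transverse to $TL$ on $L$, and by continuity and compactness of $L$ this persists on a neighborhood of $L$ uniformly in $t\in[0,1]$.

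Next I would define the flow map $\Phi_t\colon L\times(-\eps,\eps)\to U$ by $\Phi_t(\ell,s) := \op{Fl}^s_{X_t}(\ell)$. Applying the inverse function theorem to the joint map $(\ell,s,t)\mapsto(\Phi_t(\ell,s),t)$ and using compactness of $L\times[0,1]$, there exists a single $\eps>0$ and a single open neighborhood $U'\subset U$ of $L$ such that, for every $t\in[0,1]$, $\Phi_t$ is a diffeomorphism of $L\times(-\eps,\eps)$ onto an open set containing $U'$, with smooth joint dependence on $(\ell,s,t)$. By construction $\Phi_t$ is the identity on $L\times\{0\}$, and its differential there sends $\partial_s$ to $X_t|_L = X_0|_L$ and fixes $TL$; in particular $d\Phi_t|_{L\times\{0\}}$ is independent of $t$.

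Finally I would set $\psi_t := \Phi_t\circ\Phi_0^{-1}$ on $U'$. From the construction, $\psi_0=\op{Id}$, each $\psi_t$ restricts to the identity on $L$, and $d(\psi_t)_u=\op{Id}$ for $u\in L$ because $d\Phi_t|_L$ is $t$-independent. Moreover $\Phi_0$ pushes the vertical line field $\langle\partial_s\rangle$ on $L\times(-\eps,\eps)$ forward to $\kappa_0$, while $\Phi_1$ pushes it forward to $\kappa_1$, so $[\psi_1]_*\kappa_0=\kappa_1$. To make $\psi_t$ constant for $t$ near $0$ and near $1$, I would compose with a smooth monotone reparametrization $\rho\colon[0,1]\to[0,1]$ equal to $0$ near $0$ and to $1$ near $1$, replacing $\psi_t$ by $\psi_{\rho(t)}$. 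The main technical point is the uniform-in-$t$ choice of $\eps$ and $U'$, but this is ultimately routine given compactness of $L$ and the transversality of the family $X_t$ to $TL$ along $L$.
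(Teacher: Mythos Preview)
Your argument has a genuine gap: it tacitly assumes that $L$ is a hypersurface in $U$. You define $\Phi_t\colon L\times(-\eps,\eps)\to U$ and then claim that $\Phi_t$ is a diffeomorphism onto an open set containing a neighborhood $U'$ of $L$. But $L\times(-\eps,\eps)$ has dimension $\dim L + 1$, so this is only possible when $\operatorname{codim}_U L = 1$. The hypothesis $\kappa_i|_L\cap TL=\{0\}$ says only that the line field is nowhere tangent to $L$; it does \emph{not} say $TL\oplus\kappa_i|_L = TU|_L$. In the intended application (the Weinstein neighborhood theorem with boundary), the lemma is invoked with $U=\partial(T^*L)$ and the submanifold equal to $\partial L$, which has codimension $n$; for $n\ge 2$ your flow image $\Phi_t(L\times(-\eps,\eps))$ is a submanifold of positive codimension and cannot contain any open $U'$.

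The paper's proof is otherwise the same as yours---choose sections $Z_0,Z_1$ agreeing on $L$, interpolate linearly, flow, and set $\psi_t=\Psi_t\circ\Psi_0^{-1}$---but it inserts one extra step that repairs exactly this issue: it first chooses an auxiliary codimension-$1$ submanifold $\Sigma\subset U$ with $L\subset\Sigma$ and $\Sigma\pitchfork Z_t$ for all $t$ (constructed, say, by exponentiating the orthogonal complement of $\kappa_0$ inside $\nu L$), and then flows from $\Sigma$ rather than from $L$. The map $(s,x)\mapsto\operatorname{Fl}^s_{Z_t}(x)$ for $x\in\Sigma$ now has the correct dimension to be a local diffeomorphism onto a neighborhood of $L$ in $U$, and the rest of your verification goes through unchanged. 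Your proof becomes correct once you add this intermediate $\Sigma$.
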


\begin{proof} Since $\kappa_0$ and $\kappa_1$ are orientable, we can pick nonvanishing sections $Z_0$ and $Z_1$ We may assume that $Z_0 = Z_1$ along $L$. We let $Z_t$ denote the family of vector fields $Z_t := (1-t)Z_0 + tZ_1$. Since $Z_0 = Z_1$ along $L$, we can pick a neighborhood $N$ of $L$ such that $Z_t$ is nowhere vanishing for all $t$. We also select a submanifold $\Sigma \subset N$ with $\op{dim}(\Sigma) = \op{dim}(U) - 1$ and such that
\[
\Sigma \pitchfork Z_t \text{ for all }t \quad \text{and}\quad L \subset \Sigma.
\]
We can find such a $\Sigma$ by, say, picking a metric and using the exponential map on a neighborhood of $L$ in the sub--bundle $\nu L \cap \kappa_0^\perp$ of $TL$. By shrinking $\Sigma$ and scaling $Z_t$ to $\lambda Z_t$, $0 < \lambda < 1$, we can define a smooth family of embeddings
\[
\Psi:(-1,1)_s \times \Sigma \times [0,1]_t \to N, \qquad \Psi_t(s,x) = \op{exp}[Z_t]_s(x).
\]
Here $\op{exp}[Z_t]$ denotes the flow generated by $Z_t$. We let $\psi_t = \Psi_t \circ \Psi_0^{-1}$. To see the properties of (\ref{equation:prop:weinstein_nbhd:1}), note that $\Psi_t(0,l) = l$ for all $l \in L$ and $d(\Psi_t)_{0,l}(s,u) = sZ_t + u$. This implies the first two properties. The third is trivial, while the fourth is immediate from $[\Phi_t]_*(\partial_t) = Z_t$. We can make $\psi_t$ constant near $0$ and $1$ by simply reparametrizing with respect to $t$.
\end{proof}

\begin{lemma} \label{lem:char_foliation_invt} Let $L$ be a manifold with boundary and let $(T^*L,\omega)$ be the cotangent bundle with the standard symplectic form. Let $\kappa = T(\partial T^*L)^\omega$ denote the characteristic foliation of the boundary $\partial T^*L$ and let $\phi:T^*L \times (0,1] \to T^*L$ denote the family of maps $\phi_t(x,v) = (x,tv)$. Then $[\phi_t]_*(\kappa) = \kappa$.
\end{lemma}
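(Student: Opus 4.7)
The plan is to reduce the lemma to the observation that $\phi_t$ is a conformal symplectomorphism and then invoke the general principle that such maps preserve characteristic foliations of invariant hypersurfaces. First I would note that $\phi_t$ maps $\partial T^*L = T^*L|_{\partial L}$ into itself, since it fixes the base point. Next, working in local coordinates $(q,p)$ and writing the canonical Liouville form as $\lambda = \sum p_i\, dq_i$, the scaling $\phi_t(q,p) = (q,tp)$ immediately gives $\phi_t^*\lambda = t\lambda$, and therefore $\phi_t^*\omega = t\omega$.

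With this conformal identity in hand, the argument is a short piece of linear algebra. Given $V \in \kappa_u = T_u(\partial T^*L)^\omega$ and an arbitrary $W \in T_{\phi_t(u)}(\partial T^*L)$, the vector $(d\phi_t)^{-1}W$ lies in $T_u(\partial T^*L)$ by invariance of $\partial T^*L$ under $\phi_t$, so
\[
\omega_{\phi_t(u)}\bigl(d\phi_t(V), W\bigr) = (\phi_t^*\omega)_u\bigl(V, (d\phi_t)^{-1}W\bigr) = t \cdot \omega_u\bigl(V, (d\phi_t)^{-1}W\bigr) = 0.
\]
This shows $d\phi_t(\kappa_u) \subseteq \kappa_{\phi_t(u)}$. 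Since $\partial T^*L$ has codimension one in the symplectic manifold $T^*L$, both $\kappa_u$ and $\kappa_{\phi_t(u)}$ are one-dimensional, and $d\phi_t$ is injective, so this inclusion is actually an equality.

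There is no real obstacle here: the entire content of the lemma is the conformal identity $\phi_t^*\omega = t\omega$, which is a direct computation, combined with the two-line linear-algebra argument above. Alternatively, one could bypass the symplectic-orthogonal language by computing $\kappa$ explicitly in local boundary-adapted coordinates where $\partial L = \{q_1 = 0\}$, in which case one finds that $\kappa$ is spanned by $\partial/\partial p_1$, a line that is visibly preserved by the fiberwise scaling $\phi_t$.
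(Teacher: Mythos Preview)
Your argument is correct. Your main route --- observing that $\phi_t^*\omega = t\omega$ and then deducing that any conformal symplectomorphism preserving a hypersurface must preserve its characteristic line field --- is a more conceptual approach than the paper's. The paper proceeds exactly by the ``alternative'' you sketch at the end: it passes to a boundary-adapted chart with $\partial L = \{x_1 = 0\}$, identifies $\kappa = \op{span}(\partial_{p_1})$ directly from the definition of the symplectic orthogonal, and notes that $(\phi_t)_*\partial_{p_1} = t\,\partial_{p_1}$. Your version has the advantage of isolating the structural reason for the invariance and would apply verbatim to any conformal symplectomorphism of $(T^*L,\omega)$ preserving $\partial T^*L$; the paper's local computation is shorter but does not make this principle explicit.
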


\begin{proof} By passing to a chart, we may assume that $L \subset \R^+_{x_1} \times \R^{n-1}_x$ and $T^*L \subset \R^+_{x_1} \times \R^{n-1}_x \times \R^n_p$. Then $\kappa$ is simply given on $\partial T^*L \subset \{0\} \times \times \R^{n-1}_x \times \R^n_p$ by
\[
\kappa = \op{span}(\partial_{p_1}) = \op{span}(\partial_{p_2},\dots,\partial_{p_n},\partial_{x_1},\dots,\partial_{x_n})^\omega \subset T(\partial T^*L).
\]
Under the scaling map, we have $[\phi_t]_*(\partial_{p_1}) = t \cdot \partial_{p_1}$. This implies that $[\phi_t]_*(\kappa) = \kappa$. \end{proof}

\end{document}